%% file: manuscript.tex
\documentclass[11pt]{article}
\usepackage{soul}
\usepackage{array}
\usepackage{tabularx}
\usepackage[margin=1in]{geometry}
\usepackage{amsmath,amssymb,amsthm,mathtools}
\usepackage{tikz}
\usetikzlibrary{arrows.meta, calc,positioning}
\tikzset{
  panelbox/.style={draw,line width=0.75pt,rounded corners=2pt},
  paneltitle/.style={font=\bfseries\small,align=center},
  axislab/.style={font=\scriptsize,align=center},
  tinylabel/.style={font=\tiny,align=center},
  bodylabel/.style={font=\scriptsize,align=center,text width=4.35cm},
  coeff/.style={circle,draw=black,fill=black,line width=0.3pt,inner sep=0pt},
  levelbar/.style={draw=black,fill=black,line width=0.3pt},
  guide/.style={densely dotted,line width=0.35pt},
  profile/.style={line width=1.05pt},
  commonbox/.style={draw,rounded corners=2pt,line width=0.7pt,
    font=\scriptsize,align=center,inner xsep=6pt,inner ysep=4pt},
  flowarrow/.style={-{Stealth[length=1.7mm,width=1.15mm]},line width=0.65pt}
}
\usepackage{subcaption}
\usepackage{algorithm}
\usepackage{algpseudocode}
\usepackage{booktabs}
\usepackage{enumitem}
\usepackage{xcolor}
\usepackage{microtype}
\usepackage[round,authoryear,sort,compress]{natbib}
\usepackage{authblk}
\usepackage[colorlinks=true,linkcolor=black,citecolor=blue,urlcolor=blue]{hyperref}
\usepackage[nameinlink,capitalise]{cleveref}
\graphicspath{{figures/}}

\newtheorem{theorem}{Theorem}[section]
\newtheorem{lemma}{Lemma}[section]
\newtheorem{proposition}{Proposition}[section]
\newtheorem{corollary}{Corollary}[section]
\newtheorem{conjecture}{Conjecture}[section]

\theoremstyle{definition}
\newtheorem{definition}{Definition}[section]
\theoremstyle{remark}

\newcommand{\R}{\mathbb{R}}
\newcommand{\E}{\mathbb{E}}
\newcommand{\Z}{\mathbb{Z}}
\newcommand{\Oo}{\mathcal{O}}
\newcommand{\Ss}{\mathcal{S}}
\newcommand{\Pp}{\mathbb{P}}
\newcommand{\e}{\varepsilon}

\newcommand{\ret}{\mathrm{ret}}
\DeclareMathOperator{\disc}{disc}
\DeclareMathOperator{\dist}{\mathrm{dist}}

\title{Balancing fractional Brownian motion} 
\date{\today}

\author[1]{Hengrui Luo\footnote{Alphabetical order.}}
\author[2]{Yiming Xu}
\affil[1]{Department of Statistics, Rice University}
\affil[2]{Department of Mathematics, University of Kentucky}

\begin{document}
\maketitle

\begin{abstract}
We study the discrepancy of balancing $n$ independent sample paths of fractional Brownian motion with Hurst exponent $H\in(0,1)$ on $[0,1]$, an infinite-dimensional analogue of balancing Gaussian vectors. We establish a phase transition at $H=1/2$: with high probability, the discrepancy is $\Omega(n^{1/2-H})$ and $\mathcal O(n^{1/2-H}(\log n)^{c(H)})$, where $c(H)=H+1/2$ if $H\geq 1/2$ and $c(H)=1/2$ otherwise. At the critical exponent $H=1/2$, we show that the discrepancy is $\Theta(1)$ with constant probability as $n\to\infty$. In this regime, we further characterize the geometry of the solution space by computing the expected number of local minima, establishing an overlap gap property near the existence threshold, and proving its absence at every diverging optimality threshold. We also give randomized polynomial-time algorithms that compute signings with discrepancy $\mathcal O(n^{1/2-H}\sqrt{\log n})$ for $H<1/2$, $\mathcal O((\log n)^{3/2})$ for $H=1/2$, and $\mathcal O(\sqrt{\log n})$ for $H>1/2$, with high probability. Our analysis combines a truncated balancing argument based on a wavelet representation of fractional Brownian motion with probabilistic methods.
\end{abstract}

\input{introduction}

\input{balance-fbm}

\input{critical-regime}

\input{solution-space}

\section{Conclusion}\label{sec:con}

In this work, we studied the function-balancing problem in the average-case setting where the functions are independent sample paths of fBm on $[0,1]$ with Hurst exponent $H\in(0,1)$. As the number of functions tends to infinity, we established high-probability upper bounds on the discrepancy together with matching lower bounds up to logarithmic factors, and identified a phase transition at the critical value $H=1/2$. At the critical point, we further sharpened the upper bound to match the lower bound with constant probability. We also characterized the geometry of the solution space in this regime by analyzing the expected number of local minima. 

Although this work established asymptotic results for balancing fBm, many interesting questions remain open. Besides the natural extension of our result to general indexing spaces other than $[0,1]$, it would be interesting to determine whether our approach in \Cref{sec:proof} can be extended to general Gaussian processes or random fields. We expect that a similar truncated balancing argument could be applied under an appropriate choice of basis expansion. For example, for smooth random fields, one may instead consider the Karhunen--Lo\`eve expansion or related kernel embeddings. These heuristics may also be connected to the discrete problem of tensor balancing. Although tensor balancing is formally equivalent to vector balancing after flattening, it becomes rather different when the analysis and algorithms exploit the low-complexity geometry of rank-one tensor directions.

On the other hand, a key obstacle to obtaining rigorous algorithmic hardness results in the critical case $H=1/2$ is that the sharp discrepancy upper bound is established only with constant probability via the second-moment method. It is therefore natural to ask whether this can be strengthened to a high-probability bound. In finite-dimensional settings such as the random symmetric perceptron model, \cite{abbe2022proof} proved that the partition function normalized by its expectation converges in distribution to a log-normal random variable. Inspired by this result, we conjecture that the same phenomenon holds for $Z_n(a)$ defined in \eqref{Zn}. Matching the unknown parameters with the limiting second moment $\E[Z^2_n(a)]/\E[Z_n(a)]^2\to 1/\sqrt{1-\beta/a^2}$ (cf.~\eqref{hr}) leads to the following conjecture. 
\begin{conjecture}\label{conj}
For $a>a_0$, as $n\to\infty$, 
\begin{align*}
\frac{Z_n(a)}{\E[Z_n(a)]}\xrightarrow{d}\mathrm{Lognormal}\left(-\frac{\lambda(a)}{2}, \lambda(a)\right), \qquad \lambda(a)\coloneqq -\frac{1}{2}\log\left(1-\frac{\beta}{a^2}\right). 
\end{align*}
\end{conjecture}

\Cref{conj} is also supported by a formal higher-moment calculation. Fix $k\geq 2$ and suppose, as suggested by the second-moment analysis in \Cref{sec:1/2}, that the dominant contribution to $\E[Z^k_n(a)]$ comes from $k$-tuples of signings whose pairwise overlaps are bounded by $\mathcal O(n^{3/5})$. A perturbation of the corresponding $k$-dimensional small-ball exponent, analogous to \eqref{bdd-3}, predicts
\begin{align*}
\frac{\E[Z^k_n(a)]}{\E[Z_n(a)]^k}\xrightarrow{\text{$n\to\infty$}}\left(\frac{1}{\sqrt{1-\beta/a^2}}\right)^{{k\choose 2}}=\exp\left\{{k\choose 2}\lambda(a)\right\}.   
\end{align*}
Indeed, the second derivative in each of the ${k\choose 2}$ correlation variables gives the same contribution as in the second-moment computation, while the mixed second derivatives vanish by the tensor-product parity structure. The right-hand side is exactly the $k$th moment of the lognormal random distribution in \Cref{conj}. However, we shall emphasize that, since the lognormal distribution is moment-indeterminate, even rigorous convergence of all fixed moments would not by itself imply convergence in distribution. Nevertheless, this formal calculation provides a consistency check for \Cref{conj}; see \Cref{fig:1} for additional numerical evidence.
\begin{figure}[htbp]
  \centering 
  \begin{subfigure}{0.40\textwidth}{\includegraphics[width=\linewidth, trim={0cm 0cm 3cm 0cm},clip]{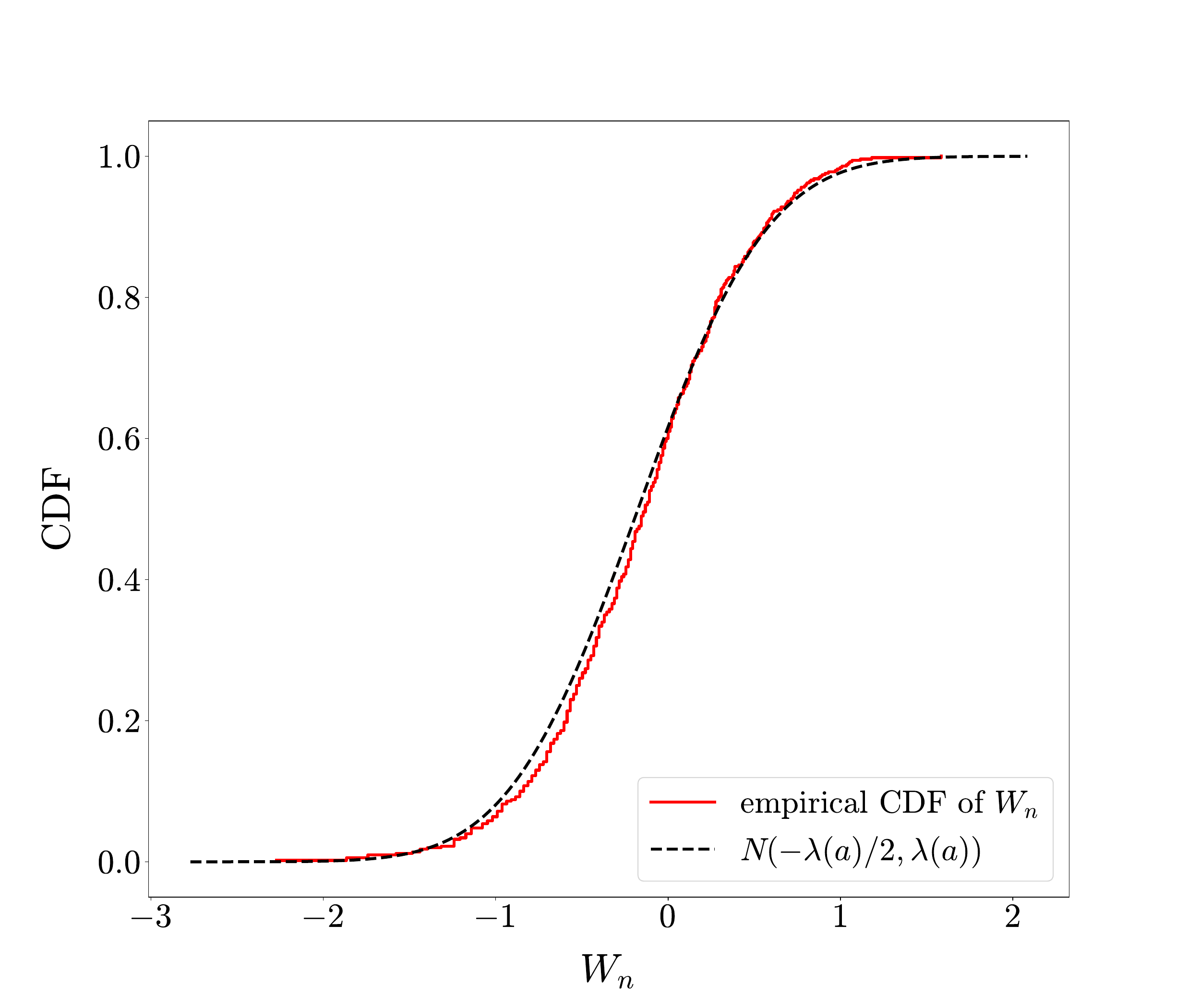}}
\end{subfigure}\hspace{2cm}
  \begin{subfigure}{0.40\textwidth}{\includegraphics[width=\linewidth, trim={0cm 0cm 3cm 0cm},clip]{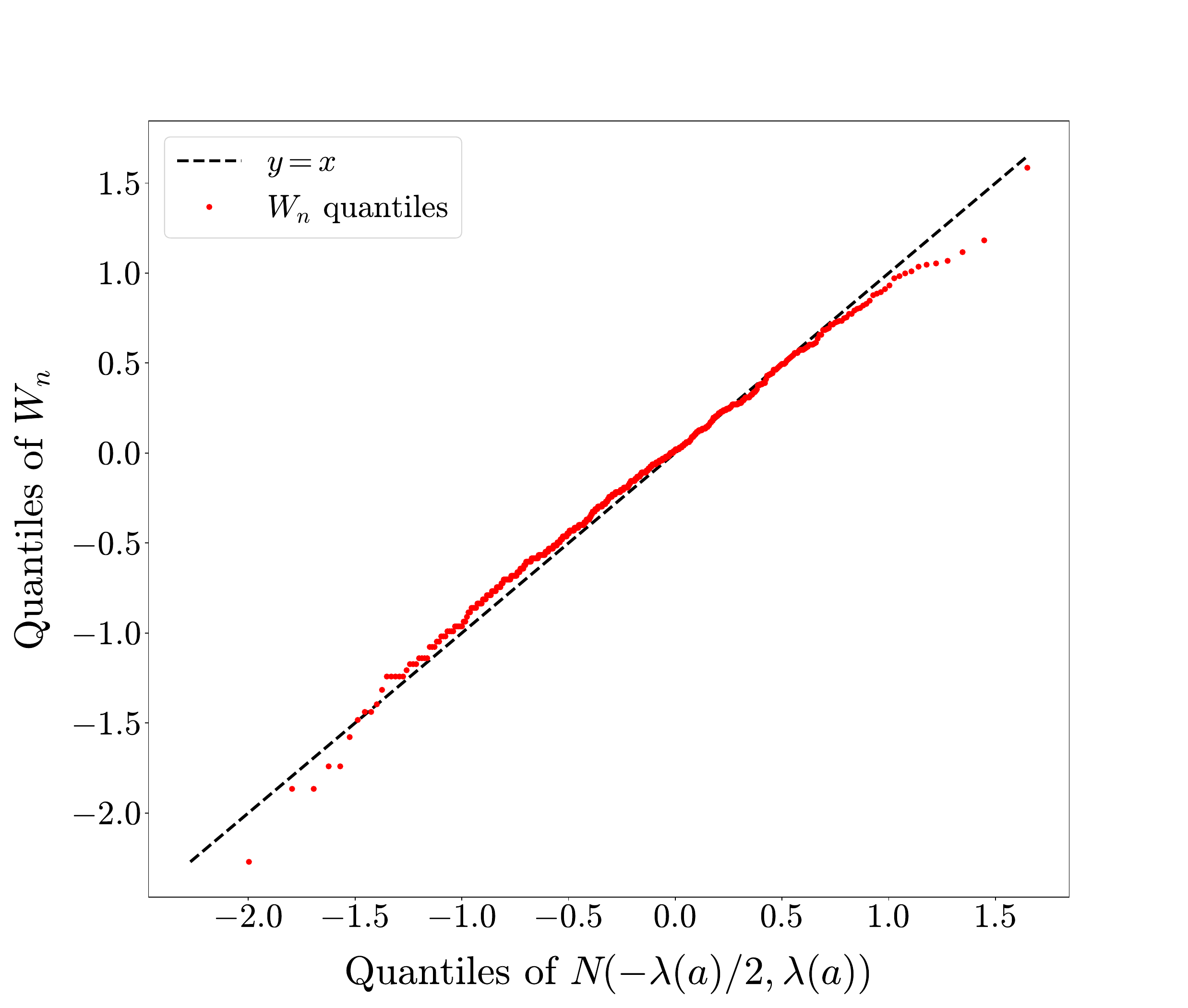}}
\end{subfigure}
\caption{Empirical CDF of $W_n\coloneqq\log Z_n(a)-\log\E[Z_n(a)]$ and the CDF of $N(-\lambda(a)/2, \lambda(a))$ (left), together with the QQ-plot (right), for $n=25$ and $a=1.4$, based on $500$ Monte Carlo simulations with $[0, 1]$ discretized into $256$ time steps. The Kolmogorov--Smirnov test in this case yields a test statistic $0.0471$ and a \texttt{p}-value $0.2097$.} \label{fig:1}
\end{figure}

Establishing \Cref{conj} would provide a natural route toward upgrading the constant-probability discrepancy bound to a high-probability result. To combine such a limit theorem with our existing 2-OGP analysis, one would need quantitative convergence estimates strong enough to imply that the probability that the discrepancy exceeds $a>a_0$ decays exponentially in $n$. One possible approach is to study the concentration of $\log Z_n(a)$ via an inductive argument similar to that in \cite{perkins2021frozen}. We leave these questions for future investigation.

\section{Acknowledgments and AI use}

H. Luo was supported in part by the U.S. Department of Energy under Contract DE-AC02-05CH11231, and NSF Grant DMS-2412403, DMS-2606336. Y. Xu was supported in part by the start-up funding from the University of Kentucky and NSF Grant DMS-2607989. 

The authors used Claude (Sonnet 5 and Fable 5) interactively to assist with developing the simulation code for \Cref{fig:1} and checking the manuscript for mathematical and typographical errors. Claude also helped suggest the observation in \eqref{b1b2}, which led us to derive a sharp lower bound on $\lambda_1(\rho)$ at both endpoints $\rho=0$ and $\rho=1$. This observation establishes the sharpness of the lower bound $a_0$ in \Cref{thm:main1}, which is crucial to our proof of \Cref{thm:ogp} and ensures that the 2-OGP result holds in a nonvacuous region. Our original approach, based on weaker estimates using the Faber--Krahn inequality, yielded a looser lower bound that was not sharp enough for \Cref{thm:ogp}. All mathematical content, including the problem formulation, theorems and proofs, interpretations, and conclusions, was developed and verified by the authors, who assume full responsibility for the contents of this work.

\bibliographystyle{unsrtnat}
\bibliography{ref}

\end{document}

%% file: introduction.tex
\section{Introduction}
\label{sec:introduction}
Given vectors $x_1, \ldots, x_n\in\R^m$, their \emph{combinatorial discrepancy} is defined by
\begin{align}
\disc(x_1, \ldots, x_n)\coloneqq\min_{\sigma\in\{\pm 1\}^n}\left\|\sum_{i=1}^n\sigma_i x_i\right\|_\infty. \label{disc-v}
\end{align}
This quantity measures how well a set of vectors can be balanced under a binary partition and is closely related to other notions such as geometric discrepancy; see, e.g.,~\cite{matousek1999geometric}. Discrepancy theory is a classical area of mathematics and theoretical computer science with numerous applications, including randomized algorithms and optimization \citep{srinivasan1999approximation, chazelle2000discrepancy, nikolov2014new}, experimental design \citep{harshaw2024balancing, raodistributional, chennonlinear}, numerical integration \citep{niederreiter1992random, bansal2025quasi}, and machine learning \citep{karnin2019discrepancy, dwivedi2024kernel, carrell2025low}.

The study of combinatorial discrepancy is typically formulated in two ways, depending on how the vectors under balancing are treated. The first assumes that the $x_i$ are fixed and seeks to understand how small the discrepancy can be in the worst case. These results are traditionally obtained nonconstructively using techniques based on partial coloring \citep{beck1981integer, spencer1985six, gluskin1989extremal} or convex geometry \citep{banaszczyk1998balancing, banaszczyk2012series}. Recently, significant progress has been made on algorithmic constructions to realize these ideas; see, for example, \cite{bansal2010constructive, lovett2015constructive, rothvoss2017constructive, bansal2018gram, eldan2018efficient, bansal2025improved} for a subset of these works, and \cite{bansal2022discrepancy} for a comprehensive review.
 
Alternatively, one may assume that the $x_i$ are drawn from a probability distribution. Intuitively, randomness can lead to a substantially smaller discrepancy on average than in the worst case due to cancellations. \citet{karmarkar1986probabilistic} gave the first average-case analysis for balancing i.i.d. random variables and showed that $\disc(x_1, \ldots, x_n)=\Oo(\sqrt{n}2^{-n})$ with high probability under suitable distributional conditions. See also \cite{borgs2009proof1, borgs2009proof2} for the detailed asymptotics characterizing the associated energy landscape. \citet{costello2009balancing} extended the result of \citet{karmarkar1986probabilistic} to the multidimensional setting for standard Gaussian vectors, obtaining $\disc(x_1, \ldots, x_n)=\Oo(\sqrt{n}2^{-n/m})$ for fixed dimension $m$. \citet{turner2020balancing} further generalized this result to the regime where $\omega(1)=m=o(n)$. The Gaussian vector balancing model is closely related to the Gaussian symmetric binary perceptron model \citep{aubin2019storage}. In this setting, a regime of particular interest is $m=\Theta(n)$, in which $\disc(x_1,\ldots,x_n)=\Theta(\sqrt{n})$ with high probability. A central problem in this regime is to determine the exact asymptotics of the discrepancy as a function of the aspect ratio $m/n$ \citep{abbe2022proof,perkins2021frozen, gamarnik2022algorithms, pmlr-v195-gamarnik23a}. Beyond Gaussian vectors, the discrepancy problem has also been studied for other discrete distributions \citep{borgs2001phase, potukuchi2018discrepancy, hoberg2019fourier, franks2020discrepancy, altschuler2022discrepancy, doi:10.1137/21M1451427}. 

Existing work on combinatorial discrepancy has predominantly focused on balancing vectors. Many modern datasets, however, are more naturally represented as functions or other infinite-dimensional objects. In applications where vectors are obtained by discretizing such continuous objects, the ambient dimension may greatly exceed the number of samples ($m \gg n$), making it more natural to study the balancing problem directly in the underlying continuous setting.
In this work, we consider a functional analogue of \eqref{disc-v} by replacing the vectors $x_i$ in \eqref{disc-v} with continuous functions $f_i$ defined on a shared compact domain $\Omega\subset\mathbb{R}^d$ with positive Lebesgue measure and define
\begin{align}
\disc(f_1,\ldots,f_n)\coloneqq\min_{\sigma\in\{\pm1\}^n}\left\|\sum_{i=1}^n\sigma_i f_i\right\|_{L^\infty(\Omega)}. \label{disc-f}
\end{align}
By the standard isometric embedding of finite-dimensional $\ell_\infty^m$ into $L^\infty(\Omega)$, every finite-dimensional instance of \eqref{disc-v} can be realized as an instance of \eqref{disc-f}. Since such an embedding holds for all $m$ simultaneously, the continuous formulation contains all finite-dimensional instances at once, and its worst-case discrepancy is trivial. In particular, choosing $2^n$ points $\{t_\sigma\}_{\sigma\in\{\pm1\}^n}\subset\Omega$ and continuous functions with $f_i(t_\sigma)=\sigma_i$ and $\|f_i\|_\infty\leq 1$ forces $\sum_i\sigma_if_i(t_\sigma)=n$ for every signing, so that $\disc(f_1,\ldots,f_n)=n$ attains the trivial maximum. Consequently, obtaining a nontrivial functional discrepancy problem requires imposing additional structure on the $f_i$.

To this end, we adopt an average-case perspective and model the $f_i$ as independent sample paths of a random process satisfying suitable regularity assumptions. Since an optimal choice of $\sigma$ ensures that the signed sum remains uniformly small over the entire domain, this leads to the following question: how does the discrepancy \eqref{disc-f} depend on the probabilistic and regularity properties of the stochastic process generating the $f_i$? Addressing this question requires quantifying how path regularity interacts with the discretization needed to control the supremum norm.

Motivated by existing work in Gaussian vector balancing, a natural model is to let $f_i$ be sample paths of Brownian motion on $\R_+$, whose law can be viewed as a canonical Gaussian measure on an infinite-dimensional Gaussian space. More generally, we consider the case where the $f_i$ are independent sample paths of fractional Brownian motion (fBm) on $\R_+$ with Hurst exponent $H\in(0,1)$ \citep{MandelbrotVanNess1968}, i.e., $f_i(t)$ is a Gaussian process with mean zero and covariance function
\begin{align}
&\E[f_i(s)f_i(t)] = \frac{1}{2}\left(s^{2H} + t^{2H} - |t-s|^{2H}\right),& t, s\geq 0. \label{fBm-cov}
\end{align}
Setting $H=1/2$ recovers the Brownian motion case, while allowing $H$ to vary yields a broader family of models with different regularities. This enables us to investigate how path regularity influences the asymptotic behavior of discrepancy.

In the rest of the article, we set $\Omega=[0, 1]$ though our analysis can be generalized to any finite interval. With slight abuse of notation, we abbreviate $\|\cdot\|_{L^\infty(\Omega)}$ as $\|\cdot\|_\infty$. We let $f_1, \ldots, f_n$ be i.i.d. sample paths of fBm with Hurst exponent $H\in(0,1)$. We will study $\disc(f_1,\ldots,f_n)$ from both the theoretical and algorithmic perspectives. Our main results are given next.  

\paragraph{Main results.} Let $a_0\coloneqq\pi/\sqrt{8\log 2}$ denote a threshold parameter that appears in our results. 
Let 
\begin{align}
h(p) = -p\log p - (1-p)\log (1-p), \qquad 0<p<1\label{bi-entropy}
\end{align}
denote the binary entropy function with natural base. 
For $\sigma\in\{\pm 1\}^n$, denote its associated loss by 
\begin{align}
L(\sigma)\coloneqq\left\|\sum_{i=1}^n\sigma_i f_i\right\|_\infty.
\end{align}
We first establish the asymptotic behavior of $\disc(f_1,\ldots,f_n)$ for all $H\in(0,1)$ as well as the loss guarantees achievable by certain efficient algorithms.
\begin{theorem}\label{thm:main}
Let 
\begin{align*}
c(H) = \begin{cases}
1/2 & H<1/2\\
1/2+H & H\geq 1/2
\end{cases}.
\end{align*}
\begin{enumerate}
\item [(1)] For all large $n$, with probability $1-o(1)$, 
\begin{align*}
n^{1/2-H}\lesssim\disc(f_1,\ldots,f_n)\lesssim n^{1/2-H}(\log n)^{c(H)},
\end{align*}
where the implicit constants depend only on $H$. When $H = 1/2$, for every fixed $\e\in(0,a_0)$, the implicit constant in the lower bound can be taken as $a_0-\e$, i.e., $\disc(f_1,\ldots,f_n)>a_0-\e$. 
\item [(2)] For every $H\in(0,1)$, there exists a randomized polynomial-time algorithm that returns a signing $\sigma\in\{\pm1\}^n$ satisfying, 
\begin{align*}
L(\sigma)\lesssim
\begin{cases}
n^{1/2-H}\sqrt{\log n} &0<H<1/2,\\
(\log n)^{3/2} &H=1/2,\\
\sqrt{\log n} &1/2<H<1,
\end{cases}
\end{align*}
with probability $1-o(1)$ over the sample paths (and the internal randomness of the algorithm), 
where the implicit constants depend only on $H$.
\end{enumerate} 
\end{theorem}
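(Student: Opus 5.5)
The plan is to handle the lower bound, the nonconstructive upper bound, and the algorithm separately. All three rest on one scaling fact. For any fixed signing $\sigma$, the sum $S_\sigma=\sum_i\sigma_i f_i$ has the law of $\sqrt{n}\,B^H$, where $B^H$ is a standard fBm on $[0,1]$.

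\textbf{Lower bound.} I would use a first moment (union bound) over the $2^n$ signings:
\begin{align*}
\Pp\bigl(\disc\le \delta\bigr)\le 2^n\,\Pp\bigl(\|B^H\|_\infty\le \delta n^{-1/2}\bigr).
\end{align*}
The small-ball asymptotics for fBm in sup norm give $-\log\Pp(\|B^H\|_\infty\le\e)\asymp \e^{-1/H}$. With $\e=\delta n^{-1/2}$ the exponent becomes $\asymp \delta^{-1/H}n^{1/(2H)}$. This beats $n\log 2$ precisely when $\delta\asymp n^{1/2-H}$ with a small constant, which gives $\disc\gtrsim n^{1/2-H}$ with high probability.

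At $H=1/2$ I would use the exact Brownian small-ball constant, $\log\Pp(\|B\|_\infty\le\e)\sim-\pi^2/(8\e^2)$. The union bound then fails only when $\pi^2/(8a^2)\le\log 2$, that is, when $a\ge a_0=\pi/\sqrt{8\log 2}$. So $\disc>a_0-\e$ with high probability.

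\textbf{Upper bound.} I would expand each $f_i$ in a wavelet/Schauder-type basis adapted to fBm, with level-$j$ coefficients of size $\asymp 2^{-jH}$ times Gaussians and $2^j$ functions per level. The argument is a truncated balancing at a cutoff level $J$ with $2^J\approx n/\log n$, so that the number of retained coordinates $m\approx n/\log n$ is $o(n)$.

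\emph{Coarse levels $j\le J$.} Here I would balance the $n$ Gaussian coefficient vectors in $\R^{m}$. Since $m=o(n)$, a Costello/Turner-type argument, or a partial-coloring/Gram–Schmidt walk step, makes every coarse coordinate tiny. A weighted $\ell_\infty$ requirement, coordinate $(j,k)$ bounded by $\lesssim 2^{-jH}\times$small, suffices after summing over levels.

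\emph{Fine levels $j>J$.} The signing is independent of these coefficients, so conditionally on it the fine part of $S_\sigma$ is again $\sqrt{n}$ times the high-frequency part of an fBm. Its sup is bounded by $\sqrt{n}\sum_{j>J}2^{-jH}\sqrt{j}\lesssim \sqrt{n}\,2^{-JH}\sqrt{\log n}$. With $2^J\approx n/\log n$ this equals $n^{1/2-H}(\log n)^{H+1/2}$.

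That is already $c(H)$ for $H\ge1/2$. For $H<1/2$ I would choose $2^J\approx n$, making the fine tail $n^{1/2-H}\sqrt{\log n}$. The price is that the coarse problem now has $m\asymp n$ coordinates, and I would balance it only to accuracy $\lesssim 2^{-jH}$-weighted. This is possible because when $H<1/2$ the allowed tolerance $n^{1/2-H}$ exceeds the $\sqrt{n}\,2^{-jH}$ scale needed. A Banaszczyk/partial-coloring bound with per-level weights should give $\sqrt{\log n}$ overall.

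\emph{$H=1/2$.} Here I would expect the constant-probability $\Theta(1)$ statement to come from a separate second-moment section. Part (1) only needs $(\log n)$, which the above gives.

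\textbf{Algorithms.} I would replace the nonconstructive coarse step with efficient methods. For $m\asymp n$ weighted $\ell_\infty$ targets, I would use the Gram–Schmidt walk or Lovett–Meka. For the $m=o(n)$ regime, I would use an iterated partial coloring that achieves $\sqrt{\log n}$ per coordinate rather than exponentially small values. Re-optimizing $J$ against the $\sqrt{\log n}$ coarse cost gives the three stated rates. For $H>1/2$ the geometric decay lets the coarse error sum to $\sqrt{\log n}$; at $H=1/2$ the sum over $\log n$ levels of $\sqrt{\log n}$ contributions yields $(\log n)^{3/2}$.

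\textbf{Main obstacle.} The hardest step is the $H<1/2$ coarse balancing. It must control about $n$ correlated, differently scaled coordinates simultaneously while losing only $\sqrt{\log n}$, which requires careful level-wise weighting in a discrepancy bound. Passing the wavelet-coefficient bounds back to an $L^\infty$ bound for $S_\sigma$ also needs to lose only the stated logarithms.
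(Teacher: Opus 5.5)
\textbf{Gap: the $H<1/2$ case of part (1) and all of part (2).} Your lower bound (first moment over the $2^n$ signings with the fBm small-ball bound, and the exact Brownian constant giving $a_0$) is the paper's argument. So is your existential bound for $H\ge 1/2$: cutoff $2^J\asymp n/\log n$, Turner--Meka--Rigollet on the $m=o(n)$ retained coefficients, and conditional independence of the discarded ones. One correction there: for $H\neq 1/2$ there is no Schauder-type basis with $2^j$ functions per level and i.i.d.\ coefficients. The paper uses the all-scale MST series over $(j,k)\in\Z^2$, so negative scales $j<K_1\approx-\frac{H}{1-H}\log_2 n$ and shifts far from $[0,2^j]$ must also be discarded and their tails bounded.

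The real problem is that the balancing step meant to deliver these rates is never specified, and the justifications you give fail. For $j\ll\log_2 n$ the random scale $\sqrt n\,2^{-jH}$ is far above $n^{1/2-H}$, so your claimed inequality goes the wrong way and coarse coordinates must be balanced well below the random scale. The Gram--Schmidt walk (Banaszczyk) guarantee is proportional to $\max_i\|u_i\|_2\asymp\sqrt m$ for the unweighted coefficient vectors. Constructive partial coloring likewise gives per-coordinate errors of order $\sqrt m$ up to logarithms, not $O(\sqrt{\log n})$, and neither method makes coordinates tiny. Trading a $\sqrt m$ coarse error against the tail $\sqrt{n\log n}\,2^{-JH}$ leaves a polynomial loss. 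Your rate bookkeeping is also inconsistent: a uniform $\sqrt{\log n}$ per coordinate would give retained error $\sqrt{\log n}\sum_j A_j=O(\sqrt{\log n})$ for every $H$. So reading off $(\log n)^{3/2}$ from ``$\log n$ levels of $\sqrt{\log n}$'' does not correspond to the method you describe.

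\textbf{What the paper does instead.} The missing ingredients are a specific weighting and the right target body.
\begin{itemize}
\item Balance $u_i=(w_{j,k}\e_{i,j,k})_{(j,k)\in\mathcal I}$ with $w_{j,k}^2=\lambda_j\rho_{j,k}$ and $\lambda_j=A_j/\sqrt{N_j}$. This equalizes the two competing quantities: $\max_i\|u_i\|_2^2\lesssim Q+\log n$ and $\sup_t\|\theta(t;w)\|_2^2\lesssim Q$, where $Q=\sum_{j=K_1}^{K_2}A_j\sqrt{N_j}$ is of order $n^{1/2-H}$, $\log n$, $1$ in the three regimes.
\item Feed the subgaussian output of the Gram--Schmidt walk into the symmetric convex body $\{z:\|F_w(z)\|_\infty\le 4C_{F_w}\}$ intersected with a ball, rather than a coordinate box.
\item Bound the Gaussian complexity $C_{F_w}$ by Dudley's integral via the increment estimate \eqref{mst3}. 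This gives $C_{F_w}\lesssim\sqrt{Q\log n}$ for $H\le 1/2$ and $C_{F_w}\lesssim 1$ for $H>1/2$.
\end{itemize}
With $2^{K_2}\asymp n$ the tail is $n^{1/2-H}\sqrt{\log n}$, and $\sqrt{Q+\log n}\,C_{F_w}$ gives exactly the three rates. The $H<1/2$ bound of part (1) is then inherited from this algorithm.

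Treating $F_w(z)$ as a single Gaussian process is essential at $H=1/2$. Bounding coordinates or levels separately and summing by the triangle inequality yields only $(\log n)^2$, because variances, not suprema, add across levels. Finally, $1-o(1)$ success needs a boosting step: rerun the walk $O(\log n)$ times and keep the best signing, evaluating the sup norm on an $O(n)$-point grid via \Cref{lem:sampling}.
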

Both the theoretical and constructive upper bounds are obtained through a general truncated balancing argument based on the Meyer--Sellan--Taqqu (MST) wavelet representation of fBm \citep{MeyerSellanTaqqu1999}; see also \cite{hall1996choice,luo2026wavelet}. Specifically, we truncate the MST representation of the sample paths at a suitably chosen level and apply vector balancing discrepancy machinery to the resulting coefficient vectors or their weighted versions. For the theoretical upper bound, the case $H\geq 1/2$ is established using the existential result of \citet{turner2020balancing}, while the case $H<1/2$ relies on the constructive Gram--Schmidt walk algorithm of \citet{bansal2018gram}. Notably, the same theoretical upper bound in the regime $H<1/2$ cannot be obtained by applying the result of \citet{turner2020balancing} despite its sharpness in the average-case setting. Heuristically, this suggests that the roughness in this regime introduces sufficient effective dimensions that move beyond the regime captured by the average-case analysis. The polynomial-time guarantee assumes access to the truncated MST coefficients. In our analysis, the number of retained coefficients is quasi-linear in $n$. The lower bound is obtained via a first-moment argument. The detailed proof is deferred to \Cref{sec:proof}.

\Cref{thm:main} illustrates a phase transition at the critical exponent $H=1/2$. Our next result sharpens the estimate in the critical regime via a second-moment computation. 

\begin{theorem}\label{thm:main1}
Let $\beta$ be the positive constant defined in \eqref{beta} (i.e., $\beta\in [0.982, 0.983]$). For $H=1/2$ and any sequence $a_n$ with $a\coloneqq\inf_n a_n>a_0$, 
\begin{align}
\Pp\left\{\disc(f_1,\ldots,f_n)\leq a_n\right\}\geq \sqrt{1-\beta/a_n^2} - o(1),
\end{align}
where $o(1)$ depends only on $a$ and $n$. 
\end{theorem}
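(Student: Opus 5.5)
We prove \Cref{thm:main1} by the second-moment method applied to
\begin{align*}
Z_n(a_n)\coloneqq\#\Big\{\sigma\in\{\pm1\}^n:\ \Big\|\sum_{i=1}^n\sigma_if_i\Big\|_\infty\le a_n\Big\},
\end{align*}
together with Paley--Zygmund: $\Pp\{Z_n>0\}\ge \E[Z_n]^2/\E[Z_n^2]$. For $H=1/2$ and fixed $\sigma$, the process $n^{-1/2}\sum_i\sigma_if_i$ is a standard Brownian motion $B$. Hence
\begin{align*}
\E[Z_n(a)]=2^n\,\Pp\{\|B\|_\infty\le a/\sqrt n\}.
\end{align*}
The small-ball asymptotic $\Pp\{\|B\|_{\infty}\le \epsilon\}\asymp (4/\pi)e^{-\pi^2/(8\epsilon^2)}$ (the first Dirichlet eigenvalue of $-\tfrac12\Delta$ on $(-\epsilon,\epsilon)$ is $\pi^2/(8\epsilon^2)$) gives $\E[Z_n]\approx 2^n e^{-\pi^2 n/(8a^2)}$. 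This tends to infinity exactly when $a>a_0=\pi/\sqrt{8\log 2}$, consistent with the lower bound in \Cref{thm:main}.

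For the second moment, write $\E[Z_n^2]=\sum_{\sigma,\tau}\Pp\{\|S_\sigma\|\le a,\|S_\tau\|\le a\}$. By symmetry this reduces to summing over the overlap $\rho=\langle\sigma,\tau\rangle/n$, with weight $2^n\binom{n}{n(1+\rho)/2}$. The pair $(n^{-1/2}S_\sigma,n^{-1/2}S_\tau)$ is a planar Brownian motion with correlation $\rho$, so the joint probability is a small-ball probability for $(B^1,B^2)$ in a parallelogram. Its decay rate is $e^{-n\lambda_1(\rho)/a^2}$, where $\lambda_1(\rho)$ is the principal Dirichlet eigenvalue of the correlated Laplacian on the square $[-1,1]^2$. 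Equivalently, it is the principal eigenvalue for the standard Laplacian on a rhombus.

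The proof then has three pieces, in this order.

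\emph{(i) Large overlaps.} Show that the exponent
\begin{align*}
\phi(\rho)=\log 2+h\big(\tfrac{1+\rho}2\big)-\lambda_1(\rho)/a^2
\end{align*}
is uniquely maximized at $\rho=0$, where it equals $2(\log2-\pi^2/(8a^2))$. This needs lower bounds on $\lambda_1(\rho)$ that are sharp at both endpoints $\rho=0$ and $\rho=1$ (this is presumably the estimate \eqref{b1b2}). Near $\rho=\pm1$ the entropy loss must beat the eigenvalue gain, and $a>a_0$ is used exactly here. With this in hand, overlaps $|\rho|\ge n^{-2/5}$ contribute $o(\E[Z_n]^2)$.

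\emph{(ii) Small overlaps.} For $|\langle\sigma,\tau\rangle|\le n^{3/5}$, perform a precise local expansion. Write the joint small-ball probability as $\Pp\{\|B\|\le\epsilon\}^2\exp\{c\,\rho^2 n+o(1)\}$. The first-order term vanishes by the symmetry $\rho\mapsto-\rho$, since flipping $B^2$ maps the correlation-$\rho$ pair to the correlation-$(-\rho)$ pair. The second-order coefficient comes from perturbing the product eigenfunction $\cos(\pi x/2)\cos(\pi y/2)$. Carry this out at finite $n$ with uniform error control, for instance via the Karhunen--Lo\`eve or spectral expansion of the killed process, to get $\rho^2 n\,\beta/(2a^2)$ at leading order. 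Summing against the binomial weights, $\rho\sqrt n$ is approximately Gaussian, and
\begin{align*}
\E\big[e^{\beta X^2/(2a^2)}\big]=(1-\beta/a^2)^{-1/2},\qquad X\sim N(0,1).
\end{align*}
This defines $\beta$ consistently with \eqref{beta}. It requires $\beta<a^2$, which holds since $a_0^2\approx1.78>0.983$.

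\emph{(iii) Conclusion.} Paley--Zygmund gives $\Pp\{Z_n(a_n)>0\}\ge\sqrt{1-\beta/a_n^2}-o(1)$, uniformly in $a_n\ge a$ by monotonicity of the error terms in $a$.

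The main obstacle is step (i): establishing that $\lambda_1(\rho)$ grows fast enough in $|\rho|$ that $\phi$ has no competing maximum at intermediate or extreme overlaps, uniformly in the whole range $a>a_0$. I would first try an explicit comparison. For a lower bound, compare the rhombus's eigenvalue with that of the inscribed strip, which gives the sharp $\rho\to1$ behavior. Near $\rho=0$, use a second-order perturbation bound. The middle range I would cover by a numerical verification on a grid, combined with Lipschitz continuity of $\lambda_1$. A secondary difficulty is making the small-ball asymptotics uniform in $\rho$ at finite $n$, with errors $o(1)$ after multiplication by $e^{\Theta(\rho^2n)}$.
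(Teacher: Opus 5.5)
Your architecture matches the paper's: Paley--Zygmund for $Z_n(a_n)$, a sum over overlaps weighted by binomial coefficients, and the trivial bound $\Pp\{\mathcal A(1)\cap\mathcal A(\sigma)\}\le\Pp\{\mathcal A(1)\}$ (your strip comparison) for $|\rho|$ near $1$. It also uses a local Gaussian sum for $|\rho|\lesssim n^{-2/5}$ that produces $(1-\beta/a^2)^{-1/2}$ from the second-order perturbation of $\cos(\pi x/2)\cos(\pi y/2)$.

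The gap is in step (i) for intermediate overlaps, which is exactly where $a_0$ is decided, and the route you propose there does not close. Work in the paper's normalization: $\lambda_1$ is the Dirichlet eigenvalue of $-\Delta$ on $\mathcal R(\rho)$, so $\lambda_1(0)=\pi^2/2$ and the joint probability decays like $e^{-n\lambda_1(\rho)/(2a^2)}$; your $\lambda_1$ is half of this. What you need for every $a>a_0$ at once is
\[
\lambda_1(\rho)\geq\frac{\pi^2}{4}\Bigl(1+\frac{h\bigl(\tfrac{1+\rho}{2}\bigr)}{\log 2}\Bigr),\qquad 0<\rho<1,
\]
and both sides tend to $\pi^2/4$ as $\rho\to1$. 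The strip only gives $\lambda_1\geq\pi^2/4$. For a given $a$ it therefore disposes of $\{\rho:\log2-h(\tfrac{1+\rho}{2})>\pi^2/(8a^2)\}=[1-\delta(a),1]$, with $\delta(a)\to0$ as $a\downarrow a_0$.

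On the leftover window $[\rho_1,1-\delta(a)]$, the margin to be certified is $\min_\rho\bigl[\lambda_1(\rho)-\tfrac{\pi^2}{2}+2a^2\bigl(\log2-h(\tfrac{1+\rho}{2})\bigr)\bigr]$. Evaluating at $\rho=1-\delta(a)$ shows it is at most $\lambda_1(1-\delta(a))-\pi^2/4$, which tends to $0$. Moreover, concavity together with $\lambda_1(1^-)=\pi^2/4$ forces $\lambda_1'(\rho)\to-\infty$ as $\rho\to1$. So a fixed grid with a Lipschitz bound cannot reach this region. As written, your plan proves the theorem only for $a\geq a_0+\eta$, with $\eta>0$ fixed by the computation. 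Handing off from the perturbative regime to the grid at an explicit $\rho_1$ would also require an explicit remainder in $\lambda_1(\rho)=\lambda_1(0)-\beta\rho^2+O(\rho^4)$, which analyticity alone does not supply.

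The missing idea is a single analytic lower bound with the right behavior at both ends. Write $\rho=\sin2\theta$, $b_1=(\cos\theta,\sin\theta)$, $b_2=(\sin\theta,\cos\theta)$. The Dirichlet form on the square then splits as $E(u;\rho)=\int(b_1\cdot\nabla u)^2+(b_2\cdot\nabla u)^2$. The one-dimensional Poincar\'e inequality on chords of length at most $2/\cos\theta$ gives
\[
\lambda_1(\rho)\geq\frac{\pi^2}{2}\cos^2\theta=\frac{\pi^2}{4}\bigl(1+\sqrt{1-\rho^2}\bigr).
\]
This bound has the right behavior at both ends:
\begin{itemize}
\item It is exact at $\rho=0$ and as $\rho\to1$.
\item Near $0$ its drop $\tfrac{\pi^2}{4}(1-\sqrt{1-\rho^2})\approx\tfrac{\pi^2}{8}\rho^2$ stays below the entropy allowance $2a_0^2(\log2-h(\tfrac{1+\rho}{2}))\approx a_0^2\rho^2$.
\item Near $1$ it exceeds $\pi^2/4$ by order $\sqrt{1-\rho}$, which beats the entropy deficit of order $(1-\rho)\log\frac{1}{1-\rho}$.
\end{itemize}
It reduces the whole range $n^{-2/5}\lesssim|\rho|\le1-2\delta$ to the elementary inequality $s(\rho)=\log2\,\sqrt{1-\rho^2}-h(\tfrac{1+\rho}{2})>0$ on $(0,1)$, with $s''(0)=1-\log2>0$. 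The paper proves this by a one-variable monotonicity argument, so no numerics and no explicit perturbative remainder are needed. The sharp constant $\beta$ then enters only for $|\rho|\lesssim n^{-2/5}$, where analyticity suffices.
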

\Cref{thm:main1} suggests that $\disc(f_1,\ldots,f_n)$ is effectively of order $\Oo(1)$; the threshold $a_0$ matches the lower bound in \Cref{thm:main}. The proof relies on a technical lemma (\Cref{lm:corB}) on the uniform small-ball asymptotics for the maximum of correlated planar Brownian motion and is deferred to \Cref{sec:(ii)}. A limitation of our analysis is that the probability remains asymptotically constant for every fixed $a>a_0$, rather than converging to $1$ in probability. In general, closing such a gap is challenging, and we briefly discuss this in \Cref{sec:con}. 

For the critical regime $H = 1/2$, we further characterize the geometry of the solution space. Inspired by \cite{addario2019local, gamarnik2023algorithmic}, we first compute the expected number of local minima of $L(\sigma)$ under the Hamming distance within a given sublevel set.
\begin{definition}[One-flip local minima]
For $\sigma\in\{\pm1\}^n$, let $\sigma^{(i)}$ denote its Hamming neighbor obtained by flipping the $i$th coordinate.
A signing $\sigma$ is called a local minimum with respect to the Hamming distance if $L(\sigma)\leq \min_i L(\sigma^{(i)})$. 
\end{definition}

\begin{theorem}\label{thm:count}
Let $f_1, \ldots, f_n$ be independent sample paths of standard Brownian motion on $[0,1]$ started from the origin.
Let $Z_n(a)$ be the number of elements $\sigma$ with $L(\sigma)\leq a$, and let $Y_n(a)$ be the number of signings that are also local minima. For any sequence $a_n$ with $a\coloneqq \inf_n a_n>a_0$ and $a_n<n^{1/4}/\sqrt{4\log n}$, 
\begin{align}
\E[Y_n(a_n)] = (1-o(1))\E[Z_n(a_n)] = \left(\frac{4}{\pi}-o(1)\right)\exp\left\{\left(\log 2-\frac{\pi^2}{8a_n^2}\right)n\right\}. \label{expec}
\end{align}
If we further assume $a_n\to\infty$, with probability $1-o(1)$, $Y_n(a_n) = (1+o(1))\E[Y_n(a_n)]$ and $Z_n(a_n) = (1+o(1))\E[Z_n(a_n)]$, where $o(1)$ depends only on $a_n$ and $n$.  
\end{theorem}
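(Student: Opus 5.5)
For each fixed $\sigma$, the process $S_\sigma=\sum_i\sigma_i f_i$ has the law of $\sqrt n\,B$ with $B$ a standard Brownian motion. Hence
\[
\E[Z_n(a)]=2^n\,\Pp\{\|B\|_\infty\le a/\sqrt n\}.
\]
The classical small-ball series gives
\[
\Pp\{\sup_{[0,1]}|B|\le \e\}=\frac4\pi\sum_{k\ge0}\frac{(-1)^k}{2k+1}e^{-(2k+1)^2\pi^2/(8\e^2)}.
\]
With $\e=a_n/\sqrt n$, the $k\ge1$ terms are smaller by a factor $e^{-\pi^2 n/a_n^2}=o(1)$, since $a_n=o(\sqrt n)$. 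This yields
\[
\E[Z_n(a_n)]=\Bigl(\tfrac4\pi-o(1)\Bigr)\exp\{(\log2-\pi^2/(8a_n^2))n\}.
\]

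For $Y_n$, by symmetry it suffices to show that
\[
\Pp\{\exists i: L(\sigma^{(i)})<L(\sigma)\mid L(\sigma)\le a_n\}=o(1).
\]
Write $S^{(i)}=S-2\sigma_i f_i$. I will decompose $f_i=\frac{\sigma_i}{n}S+g_i$ with $g_i$ independent of $S$. Then, conditionally on $S$, we have $S^{(i)}=(1-2/n)S-2\sigma_i g_i$. Here $g_i$ is a Brownian motion with covariance scaled by $(1-1/n)$.

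For $\sigma$ to fail to be a local minimum we need $\|S^{(i)}\|_\infty<\|S\|_\infty$ for some $i$. Let $t^*$ be a near-maximizer of $|S|$. The condition then requires $|(1-2/n)S(t^*)-2\sigma_ig_i(t^*)|<|S(t^*)|$. This is roughly the event $\sigma_i g_i(t^*)\,\mathrm{sgn}S(t^*)\gtrsim -1/n\cdot O(a_n)$, which has probability about $1/2$ for each $i$. That pointwise estimate is far too crude. The real constraint is uniform in $t$: since $\|g_i\|_\infty$ is typically of order $1$, while $\|S\|_\infty\le a_n$ with $a_n$ much smaller than $\sqrt n$, the map $S\mapsto S-2\sigma_i f_i$ changes $S$ by an $O(1)$ function. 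Flipping $i$ lowers the sup only if $2\sigma_i f_i$ is nearly aligned with $S$ near every point where $|S|$ is near its maximum, while also never exceeding it elsewhere.

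The key conditional picture is that, given $\|S\|\le a_n$, $S$ behaves like $\sqrt n$ times a Brownian motion confined to a tube of width $\e=a_n/\sqrt n$. Such a path touches the boundary $\pm a_n$ roughly $n/a_n^2$ times at well-separated times. Decreasing the sup requires $2\sigma_i f_i(t)\,\mathrm{sgn}S(t)>0$ at all touching times, up to $O(a_n/n)$ slack. The $f_i$ are independent, essentially Brownian increments over time gaps of order $a_n^2/n$, so for a single $i$ this event has probability at most $\exp(-c\,n/a_n^2)$. A union bound over the $n$ indices then gives
\[
n\,e^{-cn/a_n^2}=o(1),
\]
precisely under the hypothesis $a_n<n^{1/4}/\sqrt{4\log n}$, or more crudely under the weaker condition $a_n^2\log n\ll n$. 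The stated constraint suggests the actual argument uses a coarser count, with the number of "effectively independent" excursions of order $\sqrt n/a_n^2$. I would prove a tube lemma for this: conditionally on the small-ball event, with probability $1-o(1)$, $|S|$ comes within $O(1)$ of $a_n$ at $\gtrsim\sqrt n/a_n^2$ separated times. The proof would go via the eigenfunction expansion of the Brownian motion killed outside $(-\e,\e)$, which makes the conditioned process an $h$-transform, a stationary ergodic diffusion. This tube lemma is the main obstacle.

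For concentration when $a_n\to\infty$, I would use the second moment method: compute
\[
\E[Z_n^2]=\sum_{\sigma,\tau}\Pp\{\|S_\sigma\|\le a_n,\ \|S_\tau\|\le a_n\},
\]
where $(S_\sigma,S_\tau)/\sqrt n$ is a correlated planar Brownian motion with correlation $\rho=\langle\sigma,\tau\rangle/n$. I would invoke \Cref{lm:corB} for its small-ball exponent $\lambda_1(\rho)$ and do a Laplace method over $\rho$. The resulting ratio $\E Z_n^2/(\E Z_n)^2\to1/\sqrt{1-\beta/a_n^2}\to1$, and Chebyshev finishes. The same bound applies to $Y_n$, since $Y_n\le Z_n$ and $\E Y_n=(1-o(1))\E Z_n$.
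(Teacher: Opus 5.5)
Your computation of $\E[Z_n(a_n)]$ and your concentration step (Chebyshev with the ratio $\E[Z_n^2]/\E[Z_n]^2=(1+o(1))/\sqrt{1-\beta/a_n^2}\to1$, then Markov on $Z_n-Y_n\ge0$) agree with the paper.

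\textbf{The gap.} The first-moment claim $\E[Y_n(a_n)]=(1-o(1))\E[Z_n(a_n)]$ is never proved. You reduce it to an unproven ``tube lemma,'' and the heuristic behind that lemma fails as stated, for two reasons.

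First, slack ``within $O(1)$ of $a_n$'' gives no decay. The resulting constraints $\sigma_i g_i(t)\,\mathrm{sgn}\,S(t)\gtrsim-O(1)$ at near-maximal times all hold on $\{\|g_i\|_\infty<c\}$. That event has probability bounded below independently of $n$ and of the number of visits.

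Second, the claimed per-index rate $e^{-cn/a_n^2}$ is false. Between consecutive boundary approaches $g_i$ moves only $O(a_n/\sqrt n)$, so the sign events at those times are strongly dependent. Concretely, by exchangeability of $(L(\sigma),L(\sigma^{(i)}))$ (ties are null), the event you must bound has probability $\tfrac12\Pp\{\mathcal A(\sigma)\cap\mathcal A(\sigma^{(i)})\}$. Now restrict to the independent events $\{\|g_i\|_\infty\le\ell\}$ and $\{\|S\|_\infty\le a_n-2\ell\}$. On their intersection $\|(1-2/n)S-2\sigma_ig_i\|_\infty\le a_n$. Taking $\ell\asymp a_nn^{-1/3}$, this shows the conditional probability given $L(\sigma)\le a_n$ is at least $\exp\{-Cn^{2/3}/a_n^2\}$, which is far larger than $e^{-cn/a_n^2}$.

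A repaired version of your program would need all of the following, none of which is set up: a calibrated slack $\eta\to0$, alternating visits to both sides of the tube, and time separations $\gtrsim\eta^2$.

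\textbf{The missing idea.} A one-line reduction avoids path-level analysis entirely. The event $L(\sigma^{(i)})<L(\sigma)\le a_n$ forces $L(\sigma^{(i)})\le a_n$, so
\[
\E[Z_n(a_n)-Y_n(a_n)]\le n\,2^n\,\Pp\{\mathcal A(1)\cap\mathcal A(1^{(i)})\}.
\]
The right-hand side is a small-ball probability for planar Brownian motion with correlation $\rho=1-2/n$. Lemma~\ref{lm:corB}(i) does not cover $\rho\to1$, but only an upper bound is needed.

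Set $T=n/a_n^2$ and write the probability as $(P_T1)(0)=(P_1P_{T-1}1)(0)$. Then apply Cauchy--Schwarz, the on-diagonal bound $\|p_1(0,\cdot)\|_{L^2}\le(4\pi)^{-1/2}$, and $\|P_{T-1}\|_{L^2\to L^2}=e^{-\lambda_1(\rho)(T-1)/2}$. This gives a bound $\lesssim(1-\rho^2)^{-1/4}e^{-\lambda_1(\rho)(T-1)/2}$.

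Now combine this with $\lambda_1(\rho)\ge\frac{\pi^2}{4}\bigl(1+\sqrt{1-\rho^2}\bigr)$ and $\sqrt{1-\rho^2}\approx2/\sqrt n$. Relative to $\E[Z_n(a_n)]$ you gain a factor $\exp\{-\pi^2\sqrt n/(8a_n^2)\}$, against a polynomial loss $n^{5/4}$. That trade-off is exactly where the hypothesis $a_n<n^{1/4}/\sqrt{4\log n}$ enters.
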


The expectation asymptotics in \eqref{expec} follow from a first-moment computation, while the concentration results are obtained through a second-moment computation leveraging the estimates in the proof of \Cref{thm:main1}. The upper bound on $a_n$ is needed only for the results for $Y_n$ and can be relaxed to $a_n=o(\sqrt{n})$ for $Z_n$. The details are deferred to \Cref{sec:ogp}. Formally, \Cref{thm:count} shows that, for every sublevel threshold $a_n\to\infty$ satisfying $a_n<n^{1/4}/\sqrt{4\log n}$, asymptotically almost every level-$a_n$ configuration is a one-flip local minimum.  This reveals a locally rugged structure of the solution space of balancing Brownian motion. 

To further investigate potential geometric barriers to obtaining near-optimal signings, we study the geometry of near-optimal solutions through their (normalized) overlap, which measures the similarity of their sign patterns. A natural framework for this is the \emph{overlap gap property} (OGP), which originated in statistical physics as a tool for understanding algorithmic hardness \citep{achlioptas2006solution, mezard2005clustering}; see also \cite{gamarnik2014limits, gamarnik2018finding}. Roughly speaking, the OGP states that, with high probability, the overlaps between pairs of near-optimal solutions avoid a fixed interval. Geometrically, this implies that near-optimal solutions cluster into well-separated regions of the hypercube with respect to Hamming distance. The following definition, adapted from \cite[Definition 2.1]{gamarnik2023algorithmic}, specializes the OGP to balancing fBm.

\begin{definition}[$m$-OGP for balancing fBm]\label{def:m-OGP}
Let $\{f^{(i)}_1, \ldots, f^{(i)}_n\}_{i=0}^m$ denote $(m+1)$ independent copies of the set of $n$ independent sample paths of fBm on $[0, 1]$ with Hurst exponent $H$. For any $\tau_i\in [0, \pi/2]$, denote the interpolation $g^{(i)}_j(t; \tau_i) = \cos(\tau_i)f_j^{(0)}(t) + \sin(\tau_i)f_j^{(i)}(t)$ for $i=1,\ldots, m$ and $j = 1,\ldots, n$. Fixing $0<\eta_1<\eta_2<1$, let $\mathcal S(\eta_1, \eta_2, m, a_n, \mathcal I)$ denote the set of $m$-tuples $\{\sigma^{(i)}\}_{i=1}^m$ of $\{\pm 1\}^n$ satisfying the following conditions:  
\begin{itemize}
\item (Near-optimality condition) There exist $\tau_1, \ldots, \tau_m\in\mathcal I\subseteq [0, \pi/2]$ such that $\sigma^{(i)}$ balances the set $\{g^{(i)}_1(t; \tau_i), \ldots, g^{(i)}_n(t; \tau_i)\}$ up to order $a_n$, i.e., 
\begin{align*}
\left\|\sum_{j=1}^n\sigma^{(i)}_jg^{(i)}_j(\cdot; \tau_i)\right\|_\infty\leq a_n. 
\end{align*}
\item (Pairwise overlap condition) $\{\sigma^{(i)}\}_{i=1}^m$ have intermediate correlations within $[\eta_1, \eta_2]$: 
\begin{align*}
\frac{1}{n}\left|\sum_{\ell=1}^n\sigma^{(i)}_\ell\sigma^{(j)}_\ell\right|\in [\eta_1, \eta_2], \quad\quad \forall i\neq j.  
\end{align*} 
\end{itemize}
We say that the $m$-OGP with parameters $(\eta_1, \eta_2, m, a_n, \mathcal I)$ holds if $\Pp\{\mathcal S(\eta_1, \eta_2, m, a_n, \mathcal I)\neq\emptyset\} = \Oo(e^{-cn})$ for some absolute constant $c>0$. 
\end{definition}

For interpretation, consider the case $m=2$ with $I=[0,\pi/2]$. Suppose a deterministic algorithm finds solutions with optimality level $a_n$ along the two interpolating instance paths $\{g^{(i)}_1(\cdot, \tau), \ldots, g^{(i)}_n(\cdot; \tau)\}_\tau$ for $i=1, 2$. At $\tau=0$, the two instances coincide, so the algorithm produces the same output. At $\tau=\pi/2$, the two instances are independent. If the algorithm's outputs on independent instances have asymptotically negligible overlap, then the endpoint outputs are nearly orthogonal. If the algorithm's output changes gradually as the input is perturbed, the overlap between the two outputs must therefore pass through an intermediate interval $[\eta_1,\eta_2]$. A 2-OGP states that, with high probability, no pair of near-optimal solutions with such intermediate overlap exists. Thus, an algorithm that succeeds throughout the interpolation cannot simultaneously be stable under perturbations and decorrelate on independent instances. This provides a geometric obstruction to algorithms satisfying these properties, although additional work is required to turn it into a formal hardness theorem. The case $m>2$ generalizes this intuition to the multiple-instance setting. 

For balancing Gaussian random variables, \citet{gamarnik2023algorithmic} used the $m$-OGP for general $m$ to establish barriers for specified classes of stable algorithms up to order $2^{-\omega(n\log^{-1/5}n)}$. Here we provide similar geometric evidence on the obstruction to finding a near-optimal solution by establishing a $2$-OGP result. 

\begin{theorem}\label{thm:ogp}
For $H = 1/2$, there exist $0<\eta_1<\eta_2<1$ and $a>a_0$ such that the corresponding solution to the Brownian motion balancing problem satisfies the 2-OGP with parameters $(\eta_1, \eta_2, 2, a, [0, \pi/2])$. For instance, taking $\eta_1= 0.942$, $\eta_2 = 0.943$, and $a = 1.41$ suffices. 
\end{theorem}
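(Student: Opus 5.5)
We will prove the 2-OGP by a first-moment bound on the number of pairs, with a union bound over a discretization of the interpolation parameter. Fix $\tau\in[0,\pi/2]$ and write $g_j(\cdot;\tau)=\cos(\tau)f^{(0)}_j+\sin(\tau)f^{(1)}_j$ and $g'_j(\cdot;\tau')=\cos(\tau')f^{(0)}_j+\sin(\tau')f^{(2)}_j$. For a fixed pair $(\sigma,\sigma')$ with normalized overlap $\rho=\frac1n\sum_\ell\sigma_\ell\sigma'_\ell$, the processes
\begin{align*}
B=\frac{1}{\sqrt n}\sum_j\sigma_j g_j(\cdot;\tau),\qquad B'=\frac{1}{\sqrt n}\sum_j\sigma'_j g'_j(\cdot;\tau')
\end{align*}
are standard Brownian motions. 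Their cross-covariation is $r=\rho\cos\tau\cos\tau'$, so $|r|\le|\rho|$. The near-optimality event is $\{\|B\|_\infty\le a/\sqrt n,\ \|B'\|_\infty\le a/\sqrt n\}$. By Brownian scaling, this is the event that a correlated planar Brownian motion stays in the square $[-1,1]^2$ up to time $n/a^2$.

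The key input is the uniform small-ball asymptotics of \Cref{lm:corB}. These give
\begin{align*}
\Pp\{\cdot\}=\exp\{-\lambda_1(r)\,n/a^2\,(1+o(1))\},
\end{align*}
where $\lambda_1(r)$ is the principal Dirichlet eigenvalue of $\frac12(\partial_x^2+2r\partial_x\partial_y+\partial_y^2)$ on the square, and the $o(1)$ is uniform in $r\in[-1+\delta,1]$. We also use the lower bound on $\lambda_1(\rho)$ from \eqref{b1b2}: $\lambda_1$ is even in $r$ and decreasing in $|r|$, so $\lambda_1(r)\ge\lambda_1(|\rho|)$ uniformly in $\tau,\tau'$. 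The endpoint values are $\lambda_1(0)=\pi^2/4$ and $\lambda_1(1)=\pi^2/8$; the latter corresponds to a single Brownian motion and is what produces $a_0$.

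The number of pairs with overlap in $\pm[\eta_1,\eta_2]$ is at most $2^n\cdot 2\sum_{k}\binom{n}{k}$, where $k=n(1-\rho)/2$. Its exponential rate is $\log2+\max_{\rho\in[\eta_1,\eta_2]}h((1-\rho)/2)$. Hence the expected number of bad pairs, for fixed $(\tau,\tau')$, is at most
\begin{align*}
\exp\Bigl\{n\Bigl[\log 2+h\bigl(\tfrac{1-\eta_1}{2}\bigr)-\frac{\lambda_1(\eta_2)}{a^2}\Bigr]+o(n)\Bigr\}.
\end{align*}
Here $h((1-\rho)/2)$ is decreasing in $\rho$ on this range, and we take the worst case $\rho=\eta_1$ for entropy and $\rho=\eta_2$ for energy. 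Plugging in $\eta_1=0.942$, $\eta_2=0.943$, $a=1.41$ gives $h(0.029)\approx0.131$ and $\log2\approx0.693$. We need a certified numerical lower bound on $\lambda_1(0.943)$ exceeding $(0.824)a^2\approx1.638$. This is plausible since $\pi^2/8\approx1.234$ and $\lambda_1$ rises steeply as $r$ leaves $1$; the numerics should be confirmed by the computation behind \eqref{b1b2}. It yields a strictly negative exponent $-c$.

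To handle all $\tau,\tau'\in[0,\pi/2]$ simultaneously, we discretize $[0,\pi/2]$ into a grid of mesh $n^{-K}$. We then control the perturbation $\sup_\tau\|\partial_\tau\sum_j\sigma_jg_j\|_\infty\le\|\sum\sigma_jf^{(0)}_j\|_\infty+\|\sum\sigma_jf^{(1)}_j\|_\infty$. A crude union bound shows this is at most $n^{2}$ for all $\sigma$ with probability $1-e^{-cn}$, using Gaussian tails for $\sup|B|$ and $4^n$ signings. Thus a solution at level $a$ for some $\tau$ gives one at level $a+n^{2-K}$ at a grid point, and we apply the above bound with $a$ replaced by $a+o(1)$, using the strict slack in the exponent. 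The union bound over polynomially many grid pairs preserves the $e^{-cn}$ decay.

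The main obstacle is the energy step. It requires the uniformity of \Cref{lm:corB} in $r$ near $1$, together with a rigorous numerical lower bound on $\lambda_1(0.943)$ tight enough to beat the entropy term at $a=1.41>a_0\approx1.334$. If the monotonicity of $\lambda_1$ in $|r|$ is unavailable, we would instead minimize $\lambda_1(\rho\cos\tau\cos\tau')$ over $\tau,\tau'$ directly, using the bound from \eqref{b1b2}.
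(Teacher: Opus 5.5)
Your proposal is correct and follows essentially the paper's route: a first-moment count of pairs over a grid in $(\tau_1,\tau_2)$, the uniform small-ball asymptotics of \Cref{lm:corB} together with $\lambda_1(r)\geq\lambda_1(\eta_2)$ for $|r|\leq\eta_2$, the entropy rate $\log 2+h(0.971)$, and the lower bound \eqref{newguy}. Your polynomial mesh is a harmless, slightly cleaner substitute for the paper's $e^{-\e n/2}$ mesh, which costs an extra $e^{\e n}$ in the union bound.

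The step you flag as the main obstacle is not one. Since $|r|\leq\eta_2=0.943$, \Cref{lm:corB} is only needed on $|r|\leq 1-\delta$ with $\delta=0.057$, so no uniformity near $r=1$ is required. Moreover, \eqref{newguy}, which in your normalization reads $\lambda_1(r)\geq\frac{\pi^2}{8}(1+\sqrt{1-r^2})$, already gives $\lambda_1(0.943)\geq 1.644>(\log 2+h(0.029))(1.41)^2\approx 1.639$, which closes the numerics you left unverified.
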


The proof of \Cref{thm:ogp} follows from a first-moment computation and is deferred to \Cref{sec:ogp}. The corresponding optimality threshold $a$ exceeds the threshold $a_0$ identified by Theorems~\ref{thm:main} and~\ref{thm:main1}. However, \Cref{thm:ogp} cannot be translated into a rigorous algorithmic hardness result for stable algorithms via \Cref{thm:main1}. In fact, establishing such a result requires a positive probability that the discrepancy of all interpolation instances is below $a$, which does not follow from the constant-probability guarantee in \Cref{thm:main1}. We therefore interpret \Cref{thm:ogp} only as evidence of a geometric obstruction, rather than as a rigorous algorithmic hardness result under the current form of \Cref{thm:main1}.

One may wonder whether the first-moment computation can be extended to establish the $m$-OGP for $m=m(n)\to\infty$, thereby yielding a stronger optimality threshold $a_n$ beyond the constant level, as pursued in \citet{gamarnik2023algorithmic}. 
Our next result provides some negative evidence on this approach. In particular, for every diverging sequence of optimality level $a_n$, the single-instance solution set at $\tau=0$ contains pairs whose absolute overlaps lie in any prescribed nontrivial interval with high probability. Thus, the 2-OGP in \Cref{def:m-OGP} is absent at diverging levels when $\mathcal I = \{0\}$ (and hence for any $\mathcal I\ni 0$). This geometric statement shows that the fixed-level 2-OGP does not persist at diverging thresholds, but it does not by itself imply algorithmic tractability. 
\begin{theorem}\label{thm:ogp-n}
For any $0<\eta_1<\eta_2< 1$ and $a_n = \omega(1)$, $\Pp\{\mathcal S(\eta_1, \eta_2, 2, a_n, \{0\})\neq\emptyset\} = 1-o(1)$. 
\end{theorem}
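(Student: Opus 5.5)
Since $\mathcal I=\{0\}$, both signings balance the same instance $f^{(0)}_1,\ldots,f^{(0)}_n$. It therefore suffices to show that, with probability $1-o(1)$, there exist $\sigma,\sigma'$ with $L(\sigma),L(\sigma')\le a_n$ and normalized overlap $|\langle\sigma,\sigma'\rangle|/n\in[\eta_1,\eta_2]$. We may also assume $a_n\to\infty$ slowly; for instance, replace $a_n$ by $\min(a_n,\log\log n)$, which only shrinks the solution set. The natural tool is the second-moment method applied to the pair-counting variable
\begin{align*}
W_n\coloneqq\#\Bigl\{(\sigma,\sigma'): L(\sigma)\le a_n,\ L(\sigma')\le a_n,\ \tfrac1n|\langle\sigma,\sigma'\rangle|\in[\eta_1,\eta_2]\Bigr\}.
\end{align*}
We then show $\E[W_n^2]\le(1+o(1))\E[W_n]^2$, so that $\Pp\{W_n>0\}\to1$ by Paley--Zygmund.

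\textbf{Step 1 (first moment).} For a pair with overlap $\rho n$, the processes $S=\sum\sigma_if_i/\sqrt n$ and $S'=\sum\sigma'_if_i/\sqrt n$ form a planar Brownian motion with correlation $\rho$. Hence $\E[W_n]=\sum_{\rho}2^n\binom{n}{n(1+\rho)/2}\,p_n(\rho)$, where $p_n(\rho)=\Pp\{\|S\|_\infty,\|S'\|_\infty\le a_n/\sqrt n\}$. By Brownian scaling this is a small-ball probability on the time horizon $T=n/a_n^2\to\infty$. Its exponential rate is governed by the principal Dirichlet eigenvalue $\lambda_1(\rho)$ of the correlated Laplacian on the square, which is exactly the object of \Cref{lm:corB}. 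Uniformity in $\rho$ is provided by that lemma. As $a_n\to\infty$, the cost $\lambda_1(\rho)n/a_n^2$ becomes $o(n)$ uniformly in $\rho$, so the entropy term dominates and $\E[W_n]=\exp\{n(2\log 2-\pi^2/(4a_n^2)+o(1/a_n^2))\}$ restricted to the overlap window. This is exponentially large.

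\textbf{Step 2 (second moment).} The quantity $\E[W_n^2]$ is a sum over quadruples $(\sigma,\sigma',\tau,\tau')$. The probability factor is a four-dimensional small-ball probability, with correlation matrix given by the six pairwise overlaps. Two of these overlaps are fixed in the window, and the other four are free. The entropy maximum occurs where the cross overlaps between $\{\sigma,\sigma'\}$ and $\{\tau,\tau'\}$ are $O(n^{-1/2})$ scale. Near that point the four-dimensional eigenvalue splits as the sum of two two-dimensional eigenvalues, up to a perturbation that is quadratic in the cross overlaps. I would mimic the argument behind \Cref{thm:main1} and \Cref{thm:count}, truncating cross overlaps at $n^{3/5}$ as in \Cref{sec:1/2}. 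This produces a Gaussian-type sum giving a ratio of the form $\prod(1-c_j/a_n^2)^{-1/2}$. That factor tends to $1$ because $a_n\to\infty$, which is exactly why a divergent level is needed, in parallel with the concentration statement of \Cref{thm:count}. Quadruples with large cross overlaps contribute exponentially less, by a coarse bound that uses the fact that entropy loss beats the $O(n/a_n^2)$ eigenvalue gains.

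\textbf{Main obstacle.} The hard part is the uniform control of the four-dimensional small-ball exponent over all correlation matrices, including degenerate ones. This requires a four-dimensional analogue of \Cref{lm:corB}, or at least a bound of the form $p\le\exp\{-(\text{rate})T\}$ uniformly in the matrix.

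A simpler route, which I would try first, exploits $a_n\to\infty$ by bounding every small-ball probability crudely by $\exp\{-Cn/a_n^2\}$. Such a bound is $e^{o(n)}$, which leaves only entropy to compare. Away from the typical cross-overlap region, the entropy deficit is of order $n\,\delta^2$, and this beats $O(n/a_n^2)$ once $\delta\gg1/a_n$. The remaining intermediate band requires the local quadratic expansion.
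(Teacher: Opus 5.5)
Your proposal has a genuine gap. The step that carries all the weight is $\E[W_n^2]\le(1+o(1))\E[W_n]^2$, and it rests on a four-dimensional analogue of \Cref{lm:corB}. Concretely, you would need uniform small-ball asymptotics whose prefactor is continuous in the $4\times 4$ correlation matrix, plus a uniform second-order expansion of the principal eigenvalue in the four cross correlations. You flag this as the main obstacle but do not supply it.

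Your ``simpler route'' does not remove this need. Crude bounds of size $e^{\pm Cn/a_n^2}$ beat the entropy only where the cross overlaps have normalized size $\gg 1/a_n$. To get probability $1-o(1)$ from Paley--Zygmund, the ratio of the four-point probability to the product of two-point probabilities must be $1+o(1)$, up to the Gaussian correction, on the dominant region of cross overlaps $\mathcal O(\sqrt n)$. That requires exact asymptotics with prefactors. Even the intermediate band needs the quadratic lower bound on the 4D eigenvalue.

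Step 1 is also misstated. Within the overlap window the entropy rate is $\log 2+h(\tfrac{1+\eta_1}{2})$, not $2\log 2$, and the relevant eigenvalue is $\lambda_1(\rho)$, not $\lambda_1(0)=\pi^2/2$. This does not break the method, but the displayed asymptotic is wrong.

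The missing idea is that no pair-level second moment is needed. For $a_n\to\infty$, the concentration part of \Cref{thm:count} already gives $Z_n(a_n)\geq \exp\{(\log 2-\pi^2/(8a_n^2))n\}=2^{n(1-o(1))}$ with probability $1-o(1)$. That result uses only the two-dimensional second moment, whose ratio $(1+o(1))/\sqrt{1-\beta/a_n^2}$ tends to $1$.

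The paper then argues deterministically. Fix an even $d_n$ with $(1-\eta_2)n/2<d_n<(1-\eta_1)n/2$. If no two solutions were at Hamming distance $d_n$, the Frankl--R\"odl forbidden-distance theorem would give $Z_n(a_n)\leq(2-\varepsilon_0)^n$. This contradicts the lower bound, since $\log 2-\pi^2/(8a_n^2)\to\log 2$. Two solutions at distance $d_n$ have overlap $|n-2d_n|/n\in[\eta_1,\eta_2]$, so $\mathcal S(\eta_1,\eta_2,2,a_n,\{0\})\neq\emptyset$.

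This replaces a four-point correlation analysis with a counting statement plus extremal combinatorics. Your route, if completed, would yield more, such as concentration of the number of such pairs. The price is proving a new four-dimensional small-ball lemma.
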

The proof follows from a comparison argument based on \Cref{thm:count} and the forbidden intersection bound in \cite{frankl1987forbidden}. The details are given in \Cref{sec:ogp}.  

\paragraph{Organization.}
The rest of the article is organized as follows. \Cref{sec:proof} contains the proof of \Cref{thm:main}. \Cref{sec:(ii)} contains the proof of \Cref{thm:main1}. \Cref{sec:ogp} contains the proofs of Theorems~\ref{thm:count}, \ref{thm:ogp}, and \ref{thm:ogp-n}. Finally, \Cref{sec:con} concludes the article by discussing several open questions for future research.

%% file: balance-fbm.tex
\section{Balancing fBm}\label{sec:proof}

The goal of this section is to prove \Cref{thm:main}. We first provide an outline of the proof and then fill in the details in the subsequent subsections.

For the upper bounds, we apply a truncated balancing argument based on the MST representation of fBm. In particular, each $f_i$ admits the following representation: 
\begin{align}
f_i(t) = \sum_{(j, k)\in\Z\times\Z}\e_{i,j,k}G^H_{j,k}(t), \qquad \e_{i,j,k}\stackrel{\mathrm{iid}}{\sim} N(0, 1),
\end{align}
where $G^{H}_{j,k}(t)$ are defined through the fractional integrals of Meyer wavelets; see \eqref{eq:mst-kernels} for the definition. The series converges uniformly on $[0,1]$ a.s. 

We choose a suitable finite index set $\mathcal I\subset\Z\times\Z$ and truncate $f_i$ by retaining only the coefficients indexed by $\mathcal I$:
\begin{align}
f_{i, \ret}(t) = \sum_{(j, k)\in\mathcal I}\e_{i,j,k}G^H_{j,k}(t). \label{retain}
\end{align}
Denoting the retained coefficients $x_i = (\e_{i, j, k})^\top_{(j, k)\in\mathcal I}\in\R^{|\mathcal I|}$, the proof proceeds by applying vector balancing machinery to the vectors $x_i$, or weighted variants thereof, to obtain signings that balance $f_{i,\ret}$, and then transfers this balancing to the original functions $f_i$ by accounting for the truncation error. Two different vector balancing results are used depending on the dimension of the retained coefficient vector, namely $m\coloneqq |\mathcal I|$:
\begin{itemize}
\item When $m=o(n)$, we apply the average-case vector balancing result of \citet{turner2020balancing} to obtain a signing vector via an existential argument.
\item When $m=\Omega(n)$, we employ the Gram--Schmidt walk of \citet{bansal2018gram}, a constructive vector balancing approach for the worst-case setting. This procedure yields a randomized algorithm whose running time is polynomial in $n$.
\end{itemize}

The lower bounds are established via a first-moment computation which is independent of the MST representation. For every fixed signing $\sigma$, the normalized signed sum $\frac{1}{\sqrt{n}}\sum_{i=1}^n\sigma_i f_i$ has the same distribution as $f_1$ (i.e., an fBm). Consequently, the expected number of signings with discrepancy below a prescribed threshold can be upper-bounded by analyzing the corresponding small-ball probability. In the Brownian setting, this small-ball probability admits explicit asymptotics, which yields the explicit constant in the desired lower bound. A summary of our results is given in Table~\ref{tab:phase-transition-summary}. 

\begin{table}[ht]
\centering
\renewcommand{\arraystretch}{1.12}
\begin{tabular}{@{}llll@{}}
\toprule
Regime & Existential upper bound & Constructive upper bound & Lower bound \\
\midrule
$0<H<1/2$ & $\Oo(n^{1/2-H}(\log n)^{1/2+H})$ & $\Oo(n^{1/2-H}\sqrt{\log n})$ & $\Omega(n^{1/2-H})$ \\
$H=1/2$   & $\Oo(\log n)$                    & $\Oo((\log n)^{3/2})$        & $\Omega(1)$ \\
$1/2<H<1$ & $\Oo(n^{1/2-H}(\log n)^{H+1/2})$ & $\Oo(\sqrt{\log n})$         & $\Omega(n^{1/2-H})$ \\
\bottomrule
\end{tabular}
\caption{Summary of our results obtained via the existential and constructive approaches in three different regimes, together with the corresponding lower bounds. The implicit constants in $\Oo(\cdot )$ depend on $H$.}
\label{tab:phase-transition-summary}
\end{table}

The remainder of this section is devoted to the details of the proof. \Cref{subsec:wl} reviews the MST wavelet representation of fBm and provides the associated estimates. \Cref{subsec:vb} presents the existential and constructive vector balancing techniques used in our analysis. Sections~\ref{subsec:eb} and~\ref{subsec:cb} establish the upper bounds by balancing the retained functions using the existential and constructive machinery in \Cref{subsec:vb}, respectively. \Cref{subsec:lb} provides the proof of the lower bound. For convenience, we use $a_n\lesssim_H b_n$ to denote $a_n\leq C(H) b_n$ for all large $n$ for some constant $C(H)$ depending only on $H$. 

\subsection{MST wavelet representation}\label{subsec:wl}

For $f\in L^2(\R)$, we let $\widehat{f}(\xi) = \frac{1}{\sqrt{2\pi}}\int_\R f(t) e^{-i\xi t}\mathrm d t$ denote its Fourier transform. Let $c_H>0$ be the normalization for which
\begin{align}
c_H^2\int_\R\frac{1-\cos(\xi t)}{|\xi|^{1+2H}}\mathrm{d}\xi = \frac{1}{2}|t|^{2H}, \qquad\forall t\in\R.  \label{c_H}
\end{align}
Fixing a real orthonormal Meyer mother wavelet $\psi$, define the fractional mother wavelet $\Psi_H$ by 
\begin{equation}
\widehat{\Psi_H}(\xi)
=\sqrt{2\pi}
c_H|\xi|^{-H-1/2}\widehat\psi(\xi).
\label{eq:mst-fractional-wavelet}
\end{equation}
By the properties of Meyer wavelets, $\widehat\psi$ is smooth, compactly supported, and vanishes in a neighborhood of the origin. Therefore, $\Psi_H$ is a Schwartz function. Define
\begin{equation}
G_{j,k}^H(t)
\coloneqq
2^{-Hj}\bigl(\Psi_H(2^jt-k)-\Psi_H(-k)\bigr),
\qquad (j,k)\in\mathbb Z^2,\qquad t\in[0,1].
\label{eq:mst-kernels}
\end{equation}
An fBm with Hurst exponent $H$ on $[0, 1]$ admits the following \textit{all-scale white-noise} representation:
\begin{equation}
B^H(t)=\sum_{(j, k)\in\Z\times\Z}\e_{j,k}G_{j,k}^H(t),
\qquad \e_{j,k}\stackrel{\mathrm{iid}}{\sim}N(0,1), \qquad t\in[0,1].
\label{eq:mst-series-complete}
\end{equation}
This result was formally stated in \cite[Equation~(1.8)]{MeyerSellanTaqqu1999}; the rigorous version proved therein takes a slightly different form, involving a finite-scale white-noise representation together with an infrared correction. For completeness, we prove \eqref{eq:mst-series-complete}.  
\begin{lemma}\label{lm:allscale}
The series in \eqref{eq:mst-series-complete} converges uniformly on $[0, 1]$ a.s. to an fBm with Hurst exponent $H$ on $[0, 1]$.
\end{lemma}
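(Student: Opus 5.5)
We will prove \Cref{lm:allscale} in three steps. First we identify the covariance of the limit, using $L^2$ convergence for each fixed $t$. Second we establish uniform almost-sure convergence by splitting the series into scales $j\ge 0$ (fine scales) and $j<0$ (coarse scales) and bounding each with summable deterministic estimates on $G^H_{j,k}$ combined with Gaussian maxima. Third we check that the limit is continuous and has the fBm law.

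\textbf{Covariance identification.} Fix $s,t\in[0,1]$. Since the Fourier transform of $\psi(2^j\cdot-k)$ is $2^{-j}\widehat\psi(2^{-j}\xi)e^{-ik2^{-j}\xi}$, Plancherel and the definition \eqref{eq:mst-fractional-wavelet} give
\begin{align*}
G^H_{j,k}(t)=\int_\R \frac{e^{it\xi}-1}{|\xi|^{H+1/2}}\,c_H\,\overline{\widehat{\psi_{j,k}}(\xi)}\,\mathrm d\xi ,
\end{align*}
where $\psi_{j,k}=2^{j/2}\psi(2^j\cdot-k)$. The $2^{-Hj}$ factor exactly compensates the homogeneity of $|\xi|^{-H-1/2}$, so this is the $L^2(\R)$ inner product of the fixed function $\phi_t(\xi)=c_H(e^{it\xi}-1)|\xi|^{-H-1/2}$ with the orthonormal basis $\{\widehat{\psi_{j,k}}\}$. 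The function $\phi_t$ lies in $L^2$ because $H\in(0,1)$. Parseval then yields
\begin{align*}
\sum_{j,k}G^H_{j,k}(s)G^H_{j,k}(t)=\langle\phi_s,\phi_t\rangle=c_H^2\int\frac{(e^{is\xi}-1)(e^{-it\xi}-1)}{|\xi|^{1+2H}}\,\mathrm d\xi,
\end{align*}
which by \eqref{c_H} and polarization equals \eqref{fBm-cov}. Hence the series converges in $L^2(\Pp)$ for each $t$, and the limit is a centered Gaussian process with the fBm covariance.

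\textbf{Uniform convergence.} Let $\Psi=\Psi_H$, which is Schwartz. Its derivative is Schwartz, and $\Psi(x)\to0$ with $\int\Psi'=0$.

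For fine scales $j\ge0$, we have $|G_{j,k}(t)|\le 2^{-Hj}(|\Psi(2^jt-k)|+|\Psi(-k)|)$. The $\Psi(-k)$ terms pose a difficulty, since for fixed $k$ they are summed over all $j\ge 0$ and appear not to decay in $k$ in a summable way. Instead, for each $j$ we bound
\begin{align*}
\sup_t\Bigl|\sum_k\e_{j,k}G_{j,k}(t)\Bigr|\le 2^{-Hj}\Bigl(\sup_x\sum_k|\e_{j,k}||\Psi(x-k)|\ \text{over } x\in[-k,2^j-k]\Bigr)+2^{-Hj}\Bigl|\sum_k\e_{j,k}\Psi(-k)\Bigr|.
\end{align*}
The last term is $2^{-Hj}$ times a standard Gaussian-scale variable. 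For the first, we use $|\e_{j,k}|\le C\sqrt{\log(2+|k|+j)}$ for all $j,k$ almost surely, which holds by Borel--Cantelli. With the rapid decay of $\Psi$, this makes the $j$-th block bounded by $C 2^{-Hj}\sqrt{j+1}$, and these bounds are summable. The sum over $k$ within each level converges uniformly for the same reason.

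For coarse scales $j<0$, set $u=2^j\le1$. Using $|\Psi(ut-k)-\Psi(-k)|\le u\sup_{|y-k|\le1}|\Psi'(y)|$, we get $|G_{j,k}(t)|\le 2^{(1-H)j}C_N(1+|k|)^{-N}$. Together with the same logarithmic control on $\e_{j,k}$, this gives a summable bound $\sum_{j<0}2^{(1-H)j}\sqrt{|j|}$, since $H<1$.

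Thus the double series converges absolutely and uniformly on $[0,1]$ almost surely, in any order of summation. Its limit is continuous as a uniform limit of continuous partial sums.

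\textbf{Conclusion.} The a.s.\ uniform limit coincides a.s.\ with the $L^2$ limit at each $t$. It is therefore a continuous centered Gaussian process with covariance \eqref{fBm-cov}, i.e.\ an fBm on $[0,1]$.

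The main obstacle is the uniform control at fine scales: the $k$-sum over the roughly $2^j$ translates supported near $[0,1]$ must be handled via logarithmic Gaussian maxima, and the infrared subtraction term must be checked to cause no issue. The coarse-scale convergence relies essentially on $H<1$ and the subtraction of $\Psi_H(-k)$, while the fine scales rely on $H>0$.
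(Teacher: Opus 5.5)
Your proposal is correct and follows essentially the same route as the paper. You identify $G^H_{j,k}(t)$ as the coefficients of $c_H(e^{\pm it\xi}-1)|\xi|^{-H-1/2}$ in the orthonormal basis $\{\widehat{\psi_{j,k}}\}$, which gives $L^2(\Pp)$ convergence and the fBm covariance via \eqref{c_H}; you then combine the Borel--Cantelli bound $|\e_{j,k}|\lesssim\sqrt{\log(2+|j|+|k|)}$ with per-scale envelopes $A_j\sqrt{|j|+1}$ to get a.s.\ uniform convergence and identify the limit.

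Two cosmetic points. With the paper's Fourier convention, your displayed identity actually produces $2^{-Hj}(\Psi_H(-2^jt-k)-\Psi_H(-k))$, so you should use $e^{-it\xi}$ instead of $e^{it\xi}$; this leaves the covariance unchanged. Also, the subtracted $\Psi_H(-k)$ term needs no separate Gaussian treatment, since $\sum_k\sqrt{\log(2+j+|k|)}\,|\Psi_H(-k)|\lesssim\sqrt{\log(2+j)}$ is absorbed by the same logarithmic bound.
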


The proof of \Cref{lm:allscale}, along with the results in the subsequent sections, relies on the following estimates. Fix $M\geq 4$ and define 
\begin{equation}\label{notation} 
\begin{aligned}
A_j \coloneqq\begin{cases}
2^{-Hj} & j\geq 0\\
2^{(1-H)j} & j<0
\end{cases}
,\quad N_j \coloneqq\begin{cases}
2^j & j\geq 0\\
1 & j<0
\end{cases}
,\quad \rho_{j,k}\coloneqq\begin{cases}
(1+\mathrm{dist}(k, [0, 2^j]))^{-M}& j\geq 0\\
(1+|k|)^{-M}& j<0
\end{cases}
\end{aligned}.
\end{equation}

\begin{lemma} 
\label{lem:mst-complete-envelopes}
For $H\in(0,1)$, the following estimates hold uniformly for all $j\in\Z$, and $s, t\in [0, 1]$:  
\begin{align}
\sum_{k\in\Z}\rho_{j,k}&\lesssim N_j\label{mst1}\\
\left\|\sum_{k\in\Z}|G_{j, k}^H|\right\|_\infty&\lesssim_H A_j\label{mst2}\\
\sum_{k\in\Z}\frac{|G^H_{j,k}(t)-G^H_{j,k}(s)|^2}{\rho_{j, k}}&\lesssim_H A_j^2\begin{cases}\min\{1, 2^{2j}|t-s|^2\}& j\geq 0\\|t-s|^2& j<0\end{cases}\label{mst3}
\end{align}
\end{lemma}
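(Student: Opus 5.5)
The plan is to reduce all three estimates to pointwise decay bounds on $\Psi_H$ and its derivative, and then sum over $k$, splitting into the cases $j\ge0$ and $j<0$.

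Since $\Psi_H$ is Schwartz, for every $L\ge0$ we have
\begin{align*}
|\Psi_H(x)|+|\Psi_H'(x)|\lesssim_{H,L}(1+|x|)^{-L}.
\end{align*}
We also use $\Psi_H(0)$-type cancellations. Since $\widehat{\Psi_H}$ vanishes near $0$, $\int\Psi_H=0$, although we do not expect to need this.

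For \eqref{mst1}: when $j<0$, $\sum_k(1+|k|)^{-M}$ is a convergent series since $M\ge4$. When $j\ge0$, the $\asymp 2^j$ indices $k\in[0,2^j]$ contribute $1$ each, and the indices outside contribute $2\sum_{d\ge1}(1+d)^{-M}=O(1)$. Hence the sum is $\lesssim 2^j=N_j$.

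For \eqref{mst2} we bound $|G_{j,k}^H(t)|\le 2^{-Hj}(|\Psi_H(2^jt-k)|+|\Psi_H(-k)|)$.

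\textbf{Case $j\ge0$.} Here $\sum_k|\Psi_H(2^jt-k)|\lesssim 1$ uniformly in $t$ by the decay, and $\sum_k|\Psi_H(-k)|\lesssim1$. This gives $\lesssim 2^{-Hj}$.

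\textbf{Case $j<0$.} The crude bound loses the factor $2^j$, so we use the mean value theorem instead:
\begin{align*}
|\Psi_H(2^jt-k)-\Psi_H(-k)|\le 2^j t\sup_{|y+k|\le 1}|\Psi_H'(y)|\lesssim 2^j(1+|k|)^{-L},
\end{align*}
since $2^jt\le1$. Summing over $k$ gives $\lesssim 2^{-Hj}2^j=A_j$.

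For \eqref{mst3} we write
\begin{align*}
G^H_{j,k}(t)-G^H_{j,k}(s)=2^{-Hj}\bigl(\Psi_H(2^jt-k)-\Psi_H(2^js-k)\bigr).
\end{align*}

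\textbf{Case $j\ge0$.} We use two bounds on this difference: the trivial bound $2\sup_{u\in\{s,t\}}|\Psi_H(2^ju-k)|$, and the mean value bound $2^j|t-s|\sup_{y\in[2^js,2^jt]}|\Psi_H'(y-k)|$. Both are dominated by $\min\{1,2^j|t-s|\}\,(1+\dist(k,[2^js,2^jt]))^{-L}$. Since $[2^js,2^jt]\subset[0,2^j]$, we have $\dist(k,[0,2^j])\le\dist(k,[2^js,2^jt])$. This containment is what makes the weight $\rho_{j,k}^{-1}$ affordable. The key comparison is that for the relevant $y$ in $[0,2^j]$,
\begin{align*}
(1+\dist(k,[0,2^j]))^{M}\le(1+|y-k|)^{M}.
\end{align*}
Taking $L=M+2$, each summand is therefore bounded by $2^{-2Hj}\min\{1,2^{2j}|t-s|^2\}\,(1+|y_*-k|)^{-2L+M}$ for some $y_*\in[2^js,2^jt]$. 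This decays like $(1+|y_*-k|)^{-M-4}$, which is summable uniformly in $y_*$. One subtlety is that $y_*$ depends on $k$. To handle it, we take the supremum over the interval: when $2^j|t-s|\le1$ the interval has length at most $1$, so $(1+|y-k|)^{-1}\asymp(1+|2^jt-k|)^{-1}$ uniformly in $y$. When $2^j|t-s|>1$ we use the trivial bound evaluated at $u=s$ and $u=t$ separately.

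\textbf{Case $j<0$.} Both points lie in $[0,2^j]\subset[0,1]$. The mean value theorem gives a difference $\lesssim 2^{-Hj}2^j|t-s|(1+|k|)^{-L}$. Squaring and multiplying by $(1+|k|)^M$ sums to $\lesssim A_j^2|t-s|^2$.

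I expect the main obstacle to be the bookkeeping in the $j\ge0$ case of \eqref{mst3}, where the weight $\rho_{j,k}^{-1}$ grows polynomially away from $[0,2^j]$. Everything hinges on the fact that the evaluation points $2^js,2^jt$ lie in $[0,2^j]$, so the squared Schwartz decay (order $2L\gg M$) absorbs the weight.
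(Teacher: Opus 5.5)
Your proposal is correct and follows essentially the same route as the paper. Both proofs use Schwartz decay of $\Psi_H$ and $\Psi_H'$ together with the mean value theorem, split into $j\ge 0$ and $j<0$, and obtain the $j\ge0$ case of \eqref{mst3} from the smaller of a trivial bound and a mean-value bound. Your bookkeeping in that case is slightly more careful than the paper's: you absorb $\rho_{j,k}^{-1}$ via $\mathrm{dist}(k,[0,2^j])\le|2^ju-k|$ for $u\in\{s,t\}$, and you use the mean-value bound only when $2^j|t-s|\le 1$, so the $k$-dependent intermediate point stays within distance $1$ of $2^jt$, whereas the paper leaves that $k$-dependence implicit.
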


\begin{proof}
The bound \eqref{mst1} follows directly from the definition of $\rho_{j,k}$.
To establish \eqref{mst2}, for $j\geq 0$, take $x=2^jt\in[0,2^j]$. The Schwartz decay of $\Psi_H$ ensures the periodized absolute sum $\sum_k |\Psi_H(x-k)|$ uniformly bounded in $x$, and the same is true for $\sum_k |\Psi_H(-k)|$. This gives \eqref{mst2} after multiplication by $A_j$. For $j<0$, $2^j t<1/2$ for all $t\in [0, 1]$. By the mean-value theorem, for any $t\in [0, 1]$, 
\begin{align*}
|G_{j,k}^H(t)| = 2^{-Hj}\bigl|\Psi_H(2^jt-k)-\Psi_H(-k)\bigr| \leq 2^{(1-H)j}\sup_{x\in [-k, -k+1/2]}|\Psi'_H(x)|\lesssim A_j(|k|+1/2)^{-M},
\end{align*}
where the last step holds because $\Psi_H$ is Schwartz. Summing over $k$ yields the desired result. 

To establish \eqref{mst3}, for $j\geq 0$, we consider two different approaches. First, for $0\leq k\leq 2^j$, $\rho_{j,k}=1$, and the sum in this regime is
\begin{align*}
\sum_{0\leq k\leq 2^j}|G^H_{j,k}(t)-G^H_{j,k}(s)|^2 &= A_j^2\sum_{0\leq k\leq 2^j}|\Psi_H(2^jt-k)-\Psi_H(2^js-k)|^2\\
&\lesssim A_j^2\left(\sum_{k\in\Z}\Psi^2_H(2^jt-k)+\sum_{k\in\Z}\Psi^2_H(2^js-k)\right)\\
&\lesssim_H A_j^2, 
\end{align*}
where the last step follows from the finite periodized sum established earlier and is therefore uniform for all $s, t\in [0, 1]$. 
For $k\notin [0, 2^j]$ and $t\in [0, 1]$, $|2^jt-k|\geq\mathrm{dist}(k, [0, 2^j])$. The Schwartz property allows the decay of $|G^H_{j,k}(t)-G^H_{j,k}(s)|^2\lesssim A_j^2(1+\mathrm{dist}(k, [0, 2^j]))^{-2M}$, which remains summable over $k$ when divided by $\rho_{j,k}$. Combining the two sums yields the first part of the upper bound. 

Alternatively, we can use the mean-value theorem to obtain
\begin{align}
|G^H_{j,k}(t)-G^H_{j,k}(s)|= A_j\cdot 2^{j}|t-s|\cdot |\Psi'_H(2^j\xi(s, t, j, k)-k)|, \label{>0} 
\end{align}
where $\xi(s, t, j, k)$ is some number between $s$ and $t$. Applying a similar argument to $\Psi_H'$ as in the first approach by separating the cases $0\leq k\leq 2^j$ and $k\notin [0, 2^j]$ yields the second part of the upper bound. 
 
For $j<0$, the difference in the numerator is local, so only the second approach is effective. In fact, further bounding \eqref{>0} in this case yields 
\begin{align*}
|G^H_{j,k}(t)-G^H_{j,k}(s)|\leq A_j\cdot |t-s|\cdot \sup_{x\in [-k, -k+1/2]}|\Psi'_H(x)|\lesssim A_j|t-s|(1+|k|)^{-M}.
\end{align*}
Squaring both sides and dividing by $\rho_{j,k}$ and summing over $k$ yields the desired bound. 
\end{proof}

We now give the proof of \Cref{lm:allscale}. 
\begin{proof}[Proof of \Cref{lm:allscale}]
Let
\[
\mathcal H_{\mathbb R}
\coloneqq
\bigl\{h\in L^2(\mathbb R;\mathbb C):h(-\xi)=\overline{h(\xi)}\ \text{for a.e. }\xi\bigr\}
\]
be equipped with the real inner product
\[
\langle f,g\rangle_{\mathcal H_{\mathbb R}}
\coloneqq
\operatorname{Re}\int_{\mathbb R}f(\xi)\overline{g(\xi)}\,\mathrm d\xi.
\]
The Fourier transform identifies real $L^2(\mathbb R)$ isometrically with $\mathcal H_{\mathbb R}$. In particular, the Fourier transforms of the real Meyer wavelets
$\psi_{j,k}(t)\coloneqq2^{j/2}\psi(2^jt-k)$,
\[
e_{j,k}(\xi)\coloneqq\widehat{\psi}_{j,k}(\xi)=2^{-j/2}e^{-ik2^{-j}\xi}\widehat\psi(2^{-j}\xi),
\]
form an orthonormal basis of $\mathcal H_{\mathbb R}$ as a real Hilbert space. Let $W$ be the isonormal Gaussian process over $\mathcal H_{\mathbb R}$ determined by
$W(e_{j,k})=\e_{j,-k}$. For $t\in[0,1]$, set
\begin{align*}
h_t(\xi)\coloneqq \frac{c_H(e^{it\xi}-1)}{|\xi|^{1/2+H}}.
\end{align*}
Since $|h_t|^2=\mathcal O(|\xi|^{-(1+2H)})$ as $|\xi|\to\infty$ and $|h_t|^2=\mathcal O(|\xi|^{1-2H})$ as $|\xi|\to 0$, and $h_t(-\xi) = \overline{h_t(\xi)}$, $h_t\in\mathcal H_{\R}$ for every $H\in (0, 1)$. Consequently, $W(h_t)$ is a centered Gaussian process with the $L^2(\Omega)$-representation ($\Omega$ is the underlying probability space) for every $t\in [0, 1]$:   
\begin{align}
W(h_t)=\sum_{j,k}\langle h_t,e_{j,-k}\rangle_{\mathcal H_{\mathbb R}}\e_{j,k} =  \sum_{j,k}G^H_{j,k}(t)\e_{j,k},\label{hgyr}
\end{align}
where the second step follows by noting
\begin{align*}
\langle h_t,e_{j,-k}\rangle_{\mathcal H_{\mathbb R}}
&=\operatorname{Re}\left[2^{-j/2}\int_\mathbb R
\frac{c_H(e^{it\xi}-1)}{|\xi|^{1/2+H}}
 e^{-ik2^{-j}\xi}\overline{\widehat\psi(2^{-j}\xi)}\,\mathrm d\xi\right]\\
&=2^{-jH}\left(\Psi_H(2^jt-k)-\Psi_H(-k)\right)
=G_{j,k}^H(t),
\end{align*}
where the middle identity follows from the change of variables $\eta=2^{-j}\xi$ and \eqref{eq:mst-fractional-wavelet}. Moreover, the covariance of $W(h_t)$ can be computed as
\begin{align*}
\E[W(h_s)W(h_t)]
&=\langle h_s,h_t\rangle_{\mathcal H_{\mathbb R}}\\
&=c_H^2\int_{\mathbb R}
\frac{(1-\cos(s\xi))+(1-\cos(t\xi))-(1-\cos((t-s)\xi))}{|\xi|^{1+2H}}\,\mathrm d\xi\\
&=\frac12\left(s^{2H}+t^{2H}-|t-s|^{2H}\right)
\end{align*}
by \eqref{c_H}. Thus its finite-dimensional distributions are those of fBm with Hurst exponent $H$. 

To establish the uniform convergence of the series in \eqref{hgyr}, by the Gaussian tail estimates and the Borel--Cantelli lemma, $|\e_{j,k}|\leq C(\omega)\sqrt{\log(2+ |j|+ |k|)}$ for all $j, k\in\Z$ and some realization-dependent constant $C(\omega)<\infty$ a.s. In this case, 
\begin{align}
\left\|\sum_{k\in\Z}\e_{j,k}G_{j,k}^H\right\|_\infty&\leq C(\omega)\left\|\sum_{k\in\Z}\sqrt{\log(2+ |j|+ |k|)}\cdot |G_{j,k}^H|\right\|_\infty\lesssim_H C(\omega)A_j\sqrt{|j|+1},\label{feq}
\end{align}
where the last step follows from a similar estimate as in the proof of \eqref{mst2}. Since $\sum_j A_j\sqrt{|j|+1}<\infty$, the series converges uniformly a.s. to a continuous process, which agrees with the $L^2(\Omega)$-limit $W(h_t)$ a.s. for every fixed $t\in[0,1]$. Hence, this continuous process has the same finite-marginals as $W(h_t)$. This completes the proof.
\end{proof}

In the subsequent discussion, we consider the MST representation truncated at a prescribed threshold. The next lemma, which can be viewed as a discretization consequence of Bernstein's theorem applied to the MST representation, is needed for a boosting procedure in \Cref{subsec:cb}. 

\begin{lemma}\label{lem:sampling}
Given $J\in\Z$ and $\mathcal I\subseteq\{(j, k)\in\Z\times\Z: j\leq J\}$ finite, there is an equispaced grid $\mathcal T_J\subset[0,1]$ with $|\mathcal T_J|\lesssim_H 2^J+1$ such that every function of the form $g(t) = \sum_{(j,k)\in\mathcal I} c_{j,k}G^H_{j,k}(t), t\in [0, 1]$ with arbitrary real coefficients $(c_{j,k})_{(j,k)\in\mathcal I}$, satisfies
\begin{align}
\|g\|_\infty\leq 2\max_{t\in\mathcal T_J}|g(t)|. \label{eq:wavelet-sampling}
\end{align}
\end{lemma}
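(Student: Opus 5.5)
Every such $g$ is band-limited: each $G^H_{j,k}$ is, up to the additive constant $-2^{-Hj}\Psi_H(-k)$, a dilate and translate of $\Psi_H$. Since $\widehat{\Psi_H}$ is supported in $\{|\xi|\leq 8\pi/3\}$ (the Meyer support), $t\mapsto\Psi_H(2^jt-k)$ has spectrum in $\{|\xi|\leq 2^j\cdot 8\pi/3\}\subseteq\{|\xi|\leq \Lambda_J\}$ with $\Lambda_J\coloneqq\tfrac{8\pi}{3}2^{\max(J,0)}$ for every $j\leq J$. Thus
\begin{align*}
g(t)=c+\phi(t),\qquad c\coloneqq -\sum_{(j,k)\in\mathcal I}c_{j,k}2^{-Hj}\Psi_H(-k),\qquad \phi(t)\coloneqq\sum_{(j,k)\in\mathcal I}c_{j,k}2^{-Hj}\Psi_H(2^jt-k).
\end{align*}
Here $\phi$ is a finite sum of Schwartz functions, hence bounded on $\R$, with Fourier transform supported in $[-\Lambda_J,\Lambda_J]$. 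Adding a constant only adds a Dirac mass at $\xi=0$, so $g$ itself, viewed on $\R$, is a bounded function of exponential type $\Lambda_J$. This is the key structural step: the truncation at level $J$ places all admissible $g$ into a single Paley--Wiener/Bernstein class, independently of $\mathcal I$ and of the coefficients.

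The obstacle is that Bernstein's inequality $\|g'\|_{L^\infty(\R)}\leq\Lambda_J\|g\|_{L^\infty(\R)}$ controls the derivative by the sup over all of $\R$, whereas we only want to use values on $[0,1]$. I would try the following routes in order.

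First, apply a local Bernstein-type inequality for entire functions of exponential type. Take a Markov/Remez-type estimate on the interval: for $g$ of exponential type $\Lambda$, bounded on $\R$, one has $\sup_{[0,1]}|g'|\leq C(\Lambda+1)\sup_{[0,1]}|g|$. This can be proved by approximating $g$ on $[0,1]$ by polynomials of degree $\asymp \Lambda+1$, with geometrically small error relative to the local sup norm. The difficulty is precisely that this relative error must be controlled by $\sup_{[0,1]}|g|$ rather than by $\sup_{\R}|g|$.

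Second, and more simply, if that fails, note that the functions $\Psi_H(2^j\cdot-k)$ with $j\leq J$ span a space in which any function can be written as a trigonometric-type expansion. Using the Cartwright/Plancherel--Pólya theorem, the sup over $\R$ of a bounded function of exponential type $\Lambda$ is controlled by its values on a grid of mesh $<\pi/\Lambda$; this, however, again requires information off $[0,1]$.

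The cleanest route, which I would commit to, is a compactness/finite-dimensionality argument. Consider the space $V_J$ of restrictions to $[0,1]$ of bounded functions of exponential type $\le\Lambda_J$. On $[0,1]$, such functions are uniformly approximable by polynomials of degree $d\asymp\Lambda_J$ up to a relative error. Markov's inequality for polynomials, $\|p'\|_{[0,1]}\leq 2d^2\|p\|_{[0,1]}$, would give a grid of size $\asymp\Lambda_J^2$, which is too large. I therefore use Bernstein's inequality for polynomials in its interior form, $|p'(x)|\leq d\|p\|/\sqrt{x(1-x)}$, together with a Chebyshev-spaced grid. Equispacing is required, however, so I would instead extend by a fixed smooth window. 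The step I expect to be hardest is exactly this localization; if it cannot be made rigorous, I would fall back on a weaker grid of size $\lesssim 2^J$ times a constant and absorb the boundary effects by periodization.

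Finally, once a local bound $\sup_{[0,1]}|g'|\leq C_H(2^J+1)\|g\|_{L^\infty[0,1]}$ is in hand, the grid argument is routine. Choose the equispaced grid $\mathcal T_J$ with mesh $\delta=1/(2C_H(2^J+1))$, which includes both endpoints. Let $t^*$ maximize $|g|$ on $[0,1]$ and let $s\in\mathcal T_J$ be a grid point with $|t^*-s|\leq\delta$. Then
\begin{align*}
|g(t^*)|\leq |g(s)|+\delta\,\|g'\|_{L^\infty[0,1]}\leq \max_{\mathcal T_J}|g|+\tfrac12\|g\|_\infty,
\end{align*}
which gives $\|g\|_\infty\leq 2\max_{\mathcal T_J}|g|$ with $|\mathcal T_J|\lesssim_H 2^J+1$.
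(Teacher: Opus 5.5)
\textbf{There is a genuine gap.} Your closing grid argument is fine, but it rests entirely on the local estimate $\sup_{[0,1]}|g'|\le C_H(2^J+1)\sup_{[0,1]}|g|$, and none of the routes you list establishes it. It cannot be established, because it is false, and so is the lemma as stated.

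The restrictions $G^H_{j,k}|_{[0,1]}$, $(j,k)\in\Z^2$, are linearly independent. To see this:
\begin{itemize}
\item $\widehat{\Psi_H}$ is compactly supported, so $\Psi_H$ is entire. Hence a finite combination $g=\sum c_{j,k}G^H_{j,k}$ that vanishes on $[0,1]$ vanishes on all of $\R$.
\item Then $\sum c_{j,k}2^{-Hj}\Psi_H(2^j\cdot-k)$ equals the constant $\sum c_{j,k}2^{-Hj}\Psi_H(-k)$ on $\R$. Being Schwartz, it is $0$.
\item Its Fourier transform is $\sqrt{2\pi}\,c_H|\xi|^{-H-1/2}\sum c_{j,k}\widehat{\psi_{j,k}}(\xi)$. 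Orthonormality of the Meyer wavelets then forces every $c_{j,k}=0$.
\end{itemize}
Consequently, take any finite $\mathcal T\subset[0,1]$ and any $\mathcal I\subseteq\{(j,k):j\le J\}$ with $|\mathcal I|>|\mathcal T|$. The evaluation map $g\mapsto(g(t))_{t\in\mathcal T}$ on the $|\mathcal I|$-dimensional span has a nontrivial kernel. This produces $g\not\equiv0$ on $[0,1]$ with $\max_{\mathcal T}|g|=0$, violating the claimed inequality. The lemma requires $|\mathcal T_J|\lesssim_H 2^J+1$ uniformly in $\mathcal I$, so the choice $\mathcal I=\{(J,k):0\le k\le N\}$ with $N+1>C_H(2^J+1)$ is a counterexample.

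The same dimension count applies to $\{\sin(\omega t):0<\omega<\Lambda\}$. It shows that no inequality $\sup_{[0,1]}|g'|\le C(\Lambda+1)\sup_{[0,1]}|g|$ can hold for bounded functions of exponential type $\Lambda$. So polynomial approximation, smooth windows, or periodization cannot close the step.

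\textbf{Comparison with the paper.} The paper uses exactly the global estimate you set aside. It writes $g=g*\varphi_J$ with $\widehat{\varphi_J}\equiv(2\pi)^{-1/2}$ on the spectrum of $g$. It then applies Young's inequality to get $\|g'\|_{L^\infty(\R)}\le2^J\|\varphi'\|_1\|g\|_{L^\infty(\R)}$. Finally it runs your grid argument while reading $\|g\|_{L^\infty(\R)}$ as $\|g\|_{L^\infty[0,1]}$.

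That is precisely the global-versus-local step you flagged, and by the counterexample above it cannot be repaired. This affects the paper's own application: for the truncation set $\mathcal I(K_1,K_2,R)$ used in Proposition~\ref{prop:gs-upper}, one has $|\mathcal I|\gtrsim n\log n$, which already exceeds $C_H(2^{K_2}+1)=\mathcal O(n)$.

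What the argument legitimately gives, with the paper's grid, is the additive bound
\begin{align*}
\|g\|_{L^\infty[0,1]}\le\max_{t\in\mathcal T_J}|g(t)|+\tfrac12\|g\|_{L^\infty(\R)}.
\end{align*}
For the boosting step, one would instead take a grid of polynomially small mesh. Combined with a high-probability polynomial bound on $\|\sum_i\sigma_if_{i,\ret}\|_{L^\infty(\R)}$, this gives an additive $o(1)$ evaluation error in place of the factor $2$.
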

\begin{proof}
Since $\widehat{\Psi_H}$ is compactly supported, there exists $\Omega_H<\infty$ such that every function $G^H_{j,k}$, viewed on $\R$, has Fourier support contained in $[-\Omega_H2^j, \Omega_H2^j]$. Since $\mathcal I\subseteq\{(j,k): j\leq J\}$, every retained $G^H_{j,k}$ with $(j,k)\in\mathcal I$ has Fourier support contained in $[-\Omega_H2^J,\Omega_H2^J]$, and hence so does $g$, regardless of the coefficients $c_{j,k}$ or the particular subset $\mathcal I$ chosen.

Choose a Schwartz function $\varphi$ whose Fourier transform satisfies
$\widehat{\varphi}(\xi)=\frac{1}{\sqrt{2\pi}}$ for $|\xi|\le \Omega_H$, and set $\varphi_J(x)\coloneqq 2^J\phi(2^Jx)$.
Then $\widehat{\varphi_J}(\xi)=
\widehat{\varphi}(2^{-J}\xi)
=\frac{1}{\sqrt{2\pi}}
$ on the Fourier support of $g$.  With the MST convention
$\widehat f(\xi)=
\frac{1}{\sqrt{2\pi}}
\int_{\mathbb R}f(t)e^{-it\xi}\,\mathrm{d} t$. 
As a result, $g=g*\varphi_J$. Young's inequality gives
\begin{align}
\|g'\|_\infty
\leq \|g\|_\infty\|\varphi_J'\|_1
=2^J\|\varphi'\|_1\|g\|_\infty
\leq C_H2^J\|g\|_\infty.
\end{align}
Take $\mathcal T_J\subset[0,1]$ to be an equally spaced grid of mesh at most $(2C_H2^J)^{-1}$ containing both endpoints $0,1$; then $|\mathcal T_J|\lesssim_H 2^J+1$. If $t_\star$ maximizes $|g|$ on $[0,1]$ and $t\in\mathcal T_J$ is a nearest grid point to $t_\star$, then
\begin{align}
|g(t_\star)|\leq |g(t)|+|t-t_\star|\|g'\|_\infty
\leq |g(t)|+\frac12\|g\|_\infty,
\end{align}
which, upon rearranging and using $\|g\|_\infty=|g(t_\star)|$, proves \eqref{eq:wavelet-sampling} and the asserted cardinality bound.
\end{proof}

\subsection{Two balancing results}\label{subsec:vb}

Balancing the retained functions $f_{i, \ret}$ in \eqref{retain} requires the following vector balancing results. The first is an average-case result for isotropic Gaussian vectors in the regime $\omega(1)=m=o(n)$.
\begin{lemma}[Balancing Gaussian vectors]
\label{lem:tmr}
Suppose $x_i\sim N(0,I_m)$ are independent. If $\omega(1)=m=o(n)$, then with probability tending to one, 
\begin{align}
\disc(x_1, \ldots, x_n)\leq \sqrt{\pi n}2^{-n/m}. \label{gaussian-b}
\end{align}
\end{lemma}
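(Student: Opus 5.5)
This lemma is essentially the main theorem of \citet{turner2020balancing} specialized to our setting, so the plan is to verify that their hypotheses hold and to record the constant. For completeness, here is how I would reprove it directly via the second-moment method on the number of good signings.

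\textbf{Setup.} Fix a target $\delta\coloneqq\sqrt{\pi n}\,2^{-n/m}$. Let $Z$ be the number of $\sigma\in\{\pm1\}^n$ (modulo the global sign flip) with $\|\sum_i\sigma_ix_i\|_\infty\leq\delta$. For each fixed $\sigma$, $S_\sigma\coloneqq\sum_i\sigma_ix_i\sim N(0,nI_m)$. Hence
\begin{align*}
\Pp\{\|S_\sigma\|_\infty\leq\delta\}=\Pp\{|g|\leq\delta/\sqrt n\}^m,\qquad g\sim N(0,1).
\end{align*}
Since $\delta/\sqrt n=\sqrt\pi\,2^{-n/m}\to0$, this probability equals $(2\delta/\sqrt{2\pi n})^m(1+o(1))^m=(\sqrt2\,2^{-n/m})^m e^{o(m)}=2^{m/2-n}e^{o(m)}$. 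Summing over the $2^{n-1}$ signings modulo sign gives $\E Z=2^{m/2-1}e^{o(m)}$, which tends to infinity because $m=\omega(1)$. Some care is needed so that the $(1+o(1))^m$ factor does not destroy this. The correction is $1-O(\delta^2/n)$ per coordinate, so the total is $\exp(-O(m4^{-n/m}))$. This is $1-o(1)$ only if $m4^{-n/m}\to0$, which holds since $n/m\to\infty$ faster than $\log m$ requires.

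\textbf{Second moment.} Write $\E Z^2=\sum_{\sigma,\sigma'}\Pp\{\|S_\sigma\|_\infty,\|S_{\sigma'}\|_\infty\leq\delta\}$. Let $\rho=\langle\sigma,\sigma'\rangle/n$. The pair $(S_\sigma,S_{\sigma'})$ is then Gaussian with coordinatewise correlation $\rho$, i.i.d.\ across the $m$ coordinates. For small boxes, the joint probability per coordinate is approximately $(2\delta/\sqrt{2\pi n})^2/\sqrt{1-\rho^2}$. The ratio to the squared first moment is therefore about $(1-\rho^2)^{-m/2}$. Weighting by the binomial overlap distribution gives
\begin{align*}
\frac{\E Z^2}{(\E Z)^2}\approx\E_{\rho}\bigl[(1-\rho^2)^{-m/2}\bigr],
\end{align*}
where $\rho$ is the normalized overlap of uniform random signings, so $\rho\approx N(0,1/n)$. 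For $|\rho|\le\rho_0$ small, $(1-\rho^2)^{-m/2}\le e^{m\rho^2}$, and $\E e^{m\rho^2}\to1$ because $m/n\to0$.

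\textbf{Main obstacle: large overlaps.} For $|\rho|$ close to $1$ (i.e., $\sigma'$ close to $\pm\sigma$), the small-box approximation breaks down. There I would bound the joint probability by an explicit covariance computation. The key inequality is
\begin{align*}
\Pp\{|X|,|Y|\le\epsilon\}\le\Pp\{|X|\le\epsilon\}\cdot\min\Bigl\{1,\frac{C\epsilon}{\sqrt{1-\rho^2}}\Bigr\},
\end{align*}
where $\epsilon=\delta/\sqrt n$. Summing this against the entropy count $\binom{n}{d}$ at Hamming distance $d$ must give $o((\E Z)^2)$. This is exactly where $m=o(n)$ and the exponentially small $\epsilon$ are used, and where Turner et al.\ do careful bookkeeping. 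Given that, Paley--Zygmund yields $\Pp\{Z>0\}\ge(\E Z)^2/\E Z^2\to1$, which proves \eqref{gaussian-b}.

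In the paper itself, I would simply cite \citet{turner2020balancing} for this step rather than redo it.
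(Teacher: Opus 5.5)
Your plan matches the paper's, which proves the lemma simply by invoking \citet[Theorem~1]{turner2020balancing} with the constant $\gamma=\sqrt2$. Your choice $\delta=\sqrt{\pi n}\,2^{-n/m}$, which gives $\E Z=2^{m/2-1}e^{o(m)}$, is exactly that specialization.

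One aside in your optional sketch is false: $m4^{-n/m}\to0$ does not follow from $m=o(n)$ (take $m=n/\log\log n$). It is not needed, though, since $e^{-O(m4^{-n/m})}=e^{-o(m)}$ still leaves $\E Z\to\infty$. In the second moment, handle these $\epsilon$-corrections through cancellation: the per-coordinate ratio of the joint small-box probability to the squared marginal one is exactly $1$ at $\rho=0$. Do not try to make each correction factor $1-o(1)$.
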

This follows from \cite[Theorem~1]{turner2020balancing} by taking its fixed constant $\gamma=\sqrt2>1$; the same theorem also gives the matching lower scale. Thus, this result is sharp for balancing isotropic Gaussian vectors in the diverging sublinear regime $\omega(1)=m=o(n)$ and can be directly applied to the unweighted retained coefficients. 

When $m=\Omega(n)$, the bound in \eqref{gaussian-b} is no longer covered by the existing theory. Even if it were to remain valid, the resulting bound may no longer be effective. In this regime, we use an algorithmic balancing result for deterministic vectors with uniformly bounded norms. Rather than stating the balancing bound in a form as in \eqref{gaussian-b}, we give an equivalent subgaussian formulation that is more convenient for our subsequent applications.
\begin{lemma}[Algorithmic balancing]\label{lem:gs-subgaussian}
Given $u_1,\ldots,u_n\in\mathbb R^m$ with $\|u_i\|_2\leq\zeta$, there is a randomized polynomial-time algorithm returning $\sigma\in\{\pm1\}^n$ such that $\sum_{i=1}^n\sigma_i u_i$ is $(\sqrt{40}\,\zeta)$-subgaussian. Consequently, for every symmetric convex body $\mathcal K\subseteq\mathbb R^m$ of Gaussian measure at least $1/2$, there is an absolute constant $c>0$ such that
\[
\mathbb P_{\mathrm{alg}}\left\{\sum_{i=1}^n\sigma_i u_i\in (c\zeta)\cdot\mathcal K\right\}\geq\frac12.
\]
\end{lemma}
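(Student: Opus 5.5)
This lemma is essentially the Gram--Schmidt walk guarantee of \citet{bansal2018gram}, restated in subgaussian form, and the plan is to import that result and then derive the convex-body consequence by a standard comparison of Gaussian measures.

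\textbf{Step 1: subgaussian output.} Rescale the inputs to $v_i = u_i/\zeta$, so that $\|v_i\|_2\leq 1$. Run the Gram--Schmidt walk on $v_1,\ldots,v_n$, starting from the fractional coloring $0$. The main theorem of \citet{bansal2018gram} says that it returns, in randomized polynomial time, a signing $\sigma\in\{\pm1\}^n$ such that $Y\coloneqq\sum_i\sigma_i v_i$ is $\sqrt{40}$-subgaussian. Concretely,
\begin{align*}
\E\, e^{\langle \theta, Y\rangle}\leq e^{40\|\theta\|_2^2/2}\qquad\text{for all }\theta\in\R^m.
\end{align*}
Multiplying back by $\zeta$ shows that $\sum_i\sigma_i u_i$ is $(\sqrt{40}\,\zeta)$-subgaussian. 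Here the randomness is only the algorithm's internal randomness; the $u_i$ are fixed.

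\textbf{Step 2: from subgaussianity to membership in $\mathcal K$.} I would not use Banaszczyk's theorem, since it is only existential. Instead I would use the comparison fact from the same line of work (Dadush--Garg--Lovett--Nikolov): if $Y$ is $1$-subgaussian and $\mathcal K$ is a symmetric convex body with $\gamma_m(\mathcal K)\geq 1/2$, then $\Pp\{Y\in c'\mathcal K\}\geq 1/2$ for some absolute constant $c'$.

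If that fact has to be proved rather than cited, I would argue as follows.

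1. Write $\mathcal K$ as the intersection of symmetric slabs $\{x:|\langle\theta,x\rangle|\leq 1\}$.
2. Bound the norm $\|Y\|_{\mathcal K}=\sup_\theta|\langle\theta,Y\rangle|$ through generic chaining. Talagrand's majorizing measure theorem shows that the expected supremum of a subgaussian process is at most a constant times the corresponding Gaussian supremum.
3. The Gaussian supremum is $\E\|G\|_{\mathcal K}$ with $G\sim N(0,I_m)$. Since $\gamma_m(\mathcal K)\geq1/2$, the median of $\|G\|_{\mathcal K}$ is at most $1$. Gaussian concentration of the Lipschitz function $\|\cdot\|_{\mathcal K}$ then gives $\E\|G\|_{\mathcal K}\lesssim 1$.
4. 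Markov's inequality yields $\Pp\{\|Y\|_{\mathcal K}\leq c'\}\geq 1/2$.

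\textbf{Step 3: rescale.} Applying Step 2 to $Y=\zeta^{-1}(40)^{-1/2}\sum_i\sigma_iu_i$ gives the claim with $c=\sqrt{40}\,c'$.

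\textbf{Main obstacle.} The delicate step is Step 2: a subgaussian vector is compared with a Gaussian one only through one-dimensional marginals, so it is not immediate that it lands in a body of half Gaussian measure. This is exactly where chaining, or the cited theorem, is needed. Step 3 is bookkeeping.
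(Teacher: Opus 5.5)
Your proposal is correct and follows the paper's proof. You rescale to $u_i/\zeta$, run the Gram--Schmidt walk from the zero coloring to get a $\sqrt{40}$-subgaussian output, and then invoke the Dadush--Garg--Lovett--Nikolov subgaussian form of Banaszczyk's theorem, which is exactly the comparison fact the paper cites for the convex-body conclusion. Your optional sketch of that fact (generic chaining with the majorizing measure theorem, then $\E\|G\|_{\mathcal K}\lesssim 1$, then Markov) is also sound; the one unstated point is that $\gamma_m(\mathcal K)\geq 1/2$ forces $\mathcal K$ to contain a centered Euclidean ball of absolute radius, which gives $\|\cdot\|_{\mathcal K}$ a dimension-free Lipschitz constant.
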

\begin{proof}
Applying the Gram--Schmidt walk algorithm in \cite{bansal2018gram} to the vectors $u_i/\zeta$ with initial fractional coloring $0$ yields a signing $\sigma\in\{\pm1\}^n$ such that $\sum_{i=1}^n \sigma_i(u_i/\zeta)$ is $\sqrt{40}$-subgaussian  \cite[Theorem~1.3]{bansal2018gram}. The convex-body conclusion follows from the subgaussian form of Banaszczyk's theorem due to \citet{dadush2019towards}, equivalently \cite[Theorem~1.2]{bansal2018gram}, for symmetric convex bodies.
\end{proof}

We apply \Cref{lem:gs-subgaussian} to the weighted retained coefficients to obtain an effective balancing of the retained functions. This reduces the problem to controlling a family of functionals of the resulting subgaussian vector indexed by time. In this setting, \Cref{lem:gs-subgaussian} yields the following corollary, which is the key ingredient in the analysis of \Cref{subsec:cb}.
\begin{corollary}\label{lm:ds}
Let $F:\mathbb R^m\to C([0,1])$ be linear and define
\begin{equation}
C_F\coloneqq\mathbb E_{z\sim N(0,I_m)}\bigl[\|F(z)\|_\infty\bigr].
\label{myCF}
\end{equation}
For any $u_1,\ldots,u_n\in\mathbb R^m$ with $\|u_i\|_2\leq\zeta$, there is a randomized polynomial-time algorithm returning $\sigma\in\{\pm1\}^n$ such that, with probability at least $1/2$,
\[
\left\|F\left(\sum_{i=1}^n\sigma_i u_i\right)\right\|_\infty
\leq (c\zeta)\cdot C_F
\]
for an absolute constant $c>0$.
\end{corollary}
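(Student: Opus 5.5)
The plan is to apply the convex-body form of \Cref{lem:gs-subgaussian} to a body defined through $F$. Set
\[
\mathcal K\coloneqq\left\{x\in\R^m:\ \|F(x)\|_\infty\le 2C_F\right\}.
\]
Three properties of $\mathcal K$ need to be checked. First, $\mathcal K$ is symmetric, because $F$ is linear and so $\|F(-x)\|_\infty=\|F(x)\|_\infty$. Second, $\mathcal K$ is convex, because $x\mapsto\|F(x)\|_\infty$ is a seminorm (a norm composed with a linear map), and its sublevel sets are convex. Third, $\mathcal K$ is closed. Since $F$ is linear on the finite-dimensional space $\R^m$, it is continuous into $C([0,1])$ with the sup norm, so the seminorm is continuous and its sublevel sets are closed.

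Next comes the Gaussian measure bound. By Markov's inequality applied to the nonnegative variable $\|F(z)\|_\infty$ with $z\sim N(0,I_m)$,
\[
\Pp\{z\notin\mathcal K\}=\Pp\{\|F(z)\|_\infty>2C_F\}\le \tfrac12,
\]
so $\gamma_m(\mathcal K)\ge 1/2$. Two degenerate cases need attention. If $C_F=\infty$ the statement is vacuous. If $C_F=0$, then $F(z)=0$ almost surely; by linearity and the fact that $N(0,I_m)$ has full support, this forces $F\equiv0$, and the claim holds trivially. A body that is not full-dimensional is harmless here, since \Cref{lem:gs-subgaussian} only requires symmetry, convexity and Gaussian measure at least $1/2$.

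Now invoke \Cref{lem:gs-subgaussian} with this $\mathcal K$. It produces, in randomized polynomial time, a signing $\sigma$ such that with probability at least $1/2$,
\[
\sum_i\sigma_iu_i\in(c'\zeta)\mathcal K .
\]
By homogeneity of the seminorm this is equivalent to
\[
\left\|F\left(\sum_i\sigma_iu_i\right)\right\|_\infty\le 2c'\zeta\, C_F .
\]
Taking $c=2c'$ gives the corollary.

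No step is a real obstacle; the argument is a direct reduction. The only point needing care is confirming that $\mathcal K$ satisfies the hypotheses of \Cref{lem:gs-subgaussian}, namely symmetry, convexity, closedness and Gaussian measure at least $1/2$, and handling the degenerate values $C_F\in\{0,\infty\}$ as above. The algorithm itself is the Gram--Schmidt walk and does not depend on $F$, so the polynomial running time is inherited directly from \Cref{lem:gs-subgaussian}.
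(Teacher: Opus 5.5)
Your reduction is the same as the paper's. Markov's inequality gives a sublevel set of the seminorm $x\mapsto\|F(x)\|_\infty$ with Gaussian measure at least $1/2$, and the convex-body form of \Cref{lem:gs-subgaussian} together with homogeneity finishes the argument.

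The one step you skip is boundedness. \Cref{lem:gs-subgaussian} is stated for symmetric convex \emph{bodies}. Your $\mathcal K=\{x:\|F(x)\|_\infty\le 2C_F\}$ contains $\ker F$, so it is unbounded whenever $F$ is not injective. Your remark about non-full-dimensional sets addresses a non-issue instead: a convex set of positive Gaussian measure automatically has nonempty interior.

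The paper repairs exactly this gap. It uses the threshold $4C_F$, so Markov gives measure at least $3/4$. It then intersects with a centered Euclidean ball $\mathcal Q$ of Gaussian measure $3/4$, and the union bound leaves a compact symmetric convex set of measure at least $1/2$.

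Alternatively, you could keep your set and observe that $\gamma_m(2\mathcal K)>1/2$, strictly, because $\mathcal K$ is symmetric with nonempty interior. Then $2\mathcal K\cap\{\|x\|_2\le R\}$ is a genuine body of measure at least $1/2$ for large $R$. Either fix changes only the absolute constant $c$.
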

\begin{proof}
Let $\mathcal Q\subseteq\R^m$ be a centered (closed) Euclidean ball with Gaussian measure $3/4$. Define
\begin{align*}
\mathcal K =\bigl\{z\in\R^m:\|F(z)\|_\infty\leq 4C_F\bigr\}\cap\mathcal Q, 
\end{align*}
which is symmetric, convex, and compact. Markov's inequality gives $\mathbb P\{\|F(z)\|_\infty\leq4C_F\}\geq3/4$, so the union bound yields Gaussian measure at least $1/2$ for $\mathcal K$ (i.e., $\mathcal K$ has nonempty interior). Applying \Cref{lem:gs-subgaussian} to $\mathcal K$ proves the claim after absorbing the factor $4$ into the absolute constant.
\end{proof}

Using the Riesz representation, one can further write $F(z) = \langle z, \theta(t)\rangle$ for some $\theta(t): [0, 1]\to\R^m$. With this, the quantity $C_F$ corresponds to the Gaussian complexity \cite[Section 7.5]{vershynin2018high} of $F$ in the dual space, i.e., $\{\theta(t): t\in [0, 1]\}\subset\R^m$. 

\subsection{The existential upper bound}
\label{subsec:eb} 

For $R\geq2$, define the retaining indices at $j$th scale by 
\begin{align*}
\mathcal I_j(R)\coloneqq
\begin{cases}
\{k\in\mathbb Z:-R\leq k\leq 2^j+R\},&j\geq0,\\
\{k\in\mathbb Z:|k|\leq R+1\},&j<0.
\end{cases}
\end{align*}
The retaining set $\mathcal I$ with resolution range between $K_1\leq K_2$ is defined by
\begin{align}
\mathcal I\coloneqq\mathcal I(K_1, K_2, R)\coloneqq\{(j, k): K_1\leq j\leq K_2, k\in \mathcal I_j(R)\}. \label{rset}
\end{align}
Under appropriate choices of $K_1$, $K_2$, and $R$ and using \Cref{lem:tmr}, we establish the following upper bound on $\disc(f_1,\ldots,f_n)$. 

\begin{proposition}
\label{prop:mst-complete-upper}
Let $f_1,\ldots,f_n$ be independent fBm sample paths with Hurst exponent $H\in(0,1)$. Then, with probability $1-o(1)$,
\[
\disc(f_1,\ldots,f_n)
\lesssim_H n^{1/2-H}(\log n)^{H+1/2}.
\]
\end{proposition}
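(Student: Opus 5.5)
Fix a retaining set $\mathcal I=\mathcal I(K_1,K_2,R)$ as in \eqref{rset} with $m=|\mathcal I|$, and apply \Cref{lem:tmr} to the unweighted coefficient vectors $x_i=(\e_{i,j,k})_{(j,k)\in\mathcal I}$. This gives a signing $\sigma$ with $\|\sum_i\sigma_i x_i\|_\infty\le\sqrt{\pi n}\,2^{-n/m}$. For this fixed $\sigma$ we bound
\begin{align*}
\Bigl\|\sum_i\sigma_i f_i\Bigr\|_\infty\le \Bigl\|\sum_i\sigma_i f_{i,\ret}\Bigr\|_\infty+\Bigl\|\sum_i\sigma_i (f_i-f_{i,\ret})\Bigr\|_\infty .
\end{align*}
The retained part is controlled by \eqref{mst2}:
\begin{align*}
\Bigl\|\sum_{(j,k)\in\mathcal I}\Bigl(\sum_i\sigma_i\e_{i,j,k}\Bigr)G^H_{j,k}\Bigr\|_\infty\le \sqrt{\pi n}\,2^{-n/m}\sum_{j=K_1}^{K_2}A_j\lesssim_H\sqrt n\,2^{-n/m}.
\end{align*}
We make this negligible by choosing $m\asymp n/\log n$ with a suitable constant, e.g.\ $m\le n/(C\log n)$ so that $2^{-n/m}\le n^{-C\log 2}$. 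Since $m\asymp (2^{K_2}+R)\cdot(K_2-K_1)$ plus the negative-scale count, this forces $2^{K_2}\asymp n/(\log n)^{2}$ up to the $R$ and $|K_1|$ factors. One could try to save a log, but I would aim first for the stated bound.

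\textbf{Tail control.} The main work is the tail, which depends on $\sigma$. Because $\sigma$ is chosen from the retained coefficients, it is independent of the discarded ones only if the tail is split cleanly. The discarded coefficients $\{\e_{i,j,k}:(j,k)\notin\mathcal I\}$ are independent of $\{x_i\}$, hence of $\sigma$. So conditionally on $\sigma$,
\begin{align*}
T(t)\coloneqq\sum_i\sigma_i(f_i-f_{i,\ret})(t)=\sqrt n\sum_{(j,k)\notin\mathcal I}\xi_{j,k}G^H_{j,k}(t),\qquad \xi_{j,k}\stackrel{\mathrm{iid}}{\sim}N(0,1).
\end{align*}
That is, $T$ is $\sqrt n$ times a single tail process, and the bound reduces to a sup-norm estimate for one Gaussian tail. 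We split it into three pieces.
\begin{enumerate}
\item[(a)] \emph{High scales} $j>K_2$: use the envelope argument from \eqref{feq}. With high probability $|\xi_{j,k}|\lesssim\sqrt{\log(2+|j|+|k|)}$ with a controlled constant, so the contribution is $\lesssim\sum_{j>K_2}2^{-Hj}\sqrt{j}\asymp 2^{-HK_2}\sqrt{K_2}$. One could refine this with Dudley's bound and \eqref{mst3}, but the crude bound already gives $\sqrt n\,2^{-HK_2}\sqrt{\log n}\asymp n^{1/2-H}(\log n)^{2H+1/2}$ when $2^{K_2}\asymp n/\log^2 n$.
\item[(b)] \emph{Low scales} $j<K_1$: the contribution is $\sum_{j<K_1}2^{(1-H)j}\sqrt{|j|}$, which is polynomially small once $K_1=-c\log n$.
\item[(c)] \emph{Spatially distant indices} $|k|>R$ within retained scales: the decay $\rho_{j,k}$ from the Schwartz property gives $R^{-M+1}$ smallness, so $R\asymp\log n$ suffices.
\end{enumerate}
Then $m\asymp 2^{K_2}+\log^2 n$, and the count over scales adds only an additive $\log$ factor, not a multiplicative one. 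Indeed $\sum_{j\le K_2}(2^j+2R)\asymp 2^{K_2}+R\log n$. So we may take $2^{K_2}\asymp n/\log n$, giving tail $\sqrt n(n/\log n)^{-H}\sqrt{\log n}=n^{1/2-H}(\log n)^{H+1/2}$, exactly the claim.

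\textbf{Parameter choice and the expected obstacle.} The parameters must be tuned so that $n/m\ge C\log n$ while $2^{K_2}\asymp n/\log n$; this holds for a suitable constant. Keeping $m=\omega(1)$ and $m=o(n)$ meets the hypotheses of \Cref{lem:tmr}. The main obstacle is (a): obtaining a high-probability sup bound for the high-frequency tail with only a $\sqrt{\log n}$ factor. For this I would use \eqref{mst2} scale by scale together with a union bound over the $\lesssim 2^j$ relevant coefficients at each scale, using the $\rho_{j,k}$ weights to absorb the infinitely many far coefficients. Alternatively, apply the Borell--TIS inequality to the tail process, with expectation bounded via Dudley entropy from \eqref{mst3}. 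Combining the three pieces with the retained-part bound, all with probability $1-o(1)$, gives the proposition.
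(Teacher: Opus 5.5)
Your architecture matches the paper's. You use the same retained set $\mathcal I(K_1,K_2,R)$, apply \Cref{lem:tmr} to the unweighted coefficients with $m\le n/(C\log n)$, use conditional i.i.d.\ Gaussianity of the discarded $\xi_{j,k}$ given $\sigma$, and make the same three-way split of the tail. Your corrected choice $2^{K_2}\asymp n/\log n$ is the paper's $K_2$; it is justified by your observation that the number of scales enters $m$ only additively. Your high-scale bound $\sqrt n\,2^{-HK_2}\sqrt{K_2}$ is exactly where the factor $(\log n)^{H+1/2}$ comes from. In (b) you should state that $K_1=-c\log_2 n$ requires $c\ge H/(1-H)$, since $\sqrt n\,2^{(1-H)K_1}$ must not exceed $n^{1/2-H}$.

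The step that fails as written is (c). Here $K_1\le j\le K_2$ and $k\notin\mathcal I_j(R)$; for $j\ge 0$ these are the $k$ with $\dist(k,[0,2^j])>R$, not $|k|>R$. The only decay available is $\|G^H_{j,k}\|_\infty\lesssim_{H,M}A_j(1+\dist(k,[0,2^j]))^{-M}$ for each fixed $M$, and the constant degenerates as $M\to\infty$. Indeed $\Psi_H$ decays super-polynomially but not exponentially: exponential decay would make $\widehat{\Psi_H}$ analytic in a strip, contradicting its compact support. So piece (c) is only bounded by $\sqrt n\,R^{-M+1}$ times a logarithmic factor. With $R\asymp\log n$ this is $\sqrt n$ divided by a power of $\log n$. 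That exceeds the target $n^{1/2-H}(\log n)^{H+1/2}$ by a factor of order $n^{H}$ up to logarithms, because polylogarithmic smallness cannot beat the $\sqrt n$ prefactor. You need $R^{-M+1}\lesssim n^{-H}$, i.e.\ $R$ polynomial in $n$.

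The repair is harmless. Take $R=\lceil n^{1/4}\rceil$ as the paper does, with $M$ large enough that $R^{-M+1}\le n^{-H}$. Since $R$ enters $m$ only through $R(K_2-K_1)=O_H(n^{1/4}\log n)=o(n/\log n)$, the bound $m\le n/(3\log n)$ still holds. With it, the \Cref{lem:tmr} step and your union bound over discarded indices go through unchanged.
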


\begin{proof}
Recall the MST wavelet representation of $f_i$ in \eqref{eq:mst-series-complete}:
\begin{align}
f_i(t)=\sum_{j,k}\varepsilon_{i,j,k}G_{j,k}^H(t),
\qquad
\e_{i,j,k}\stackrel{\mathrm{iid}}{\sim}N(0,1).
\end{align}
Set
\begin{equation}
K_1\coloneqq -\left\lfloor\frac{H}{1-H}\log_2\!\left(\frac{n}{\log n}\right)\right\rfloor,
\qquad
K_2\coloneqq\left\lfloor\log_2\!\left(\frac{n}{8\log n}\right)\right\rfloor,
\qquad
R\coloneqq\lceil n^{1/4}\rceil.
\label{eq:mst-complete-parameters}
\end{equation}
Since $|\mathcal I_j(R)|\leq 2^j+2R+1$ for $j\geq0$ and $|\mathcal I_j(R)|\leq 2R+3$ for $j<0$, 
\begin{equation}
n^{1/4}\leq R\leq m\coloneqq |\mathcal I|\leq C\bigl(2^{K_2+1}+(K_2-K_1+1)(2R+3)\bigr)
\leq\frac{n}{4\log n}+o\left(\frac{n}{\log n}\right)
\le
\frac{n}{3\log n}
\label{eq:mst-complete-dimension}
\end{equation}
for all large $n$.

The retained functions $f_{i, \ret}$ are defined in \eqref{retain} based on the chosen $\mathcal I$. For each $i$, let
\[
x_i\coloneqq (\varepsilon_{i,j,k})^\top_{(j,k)\in\mathcal I}\in\mathbb R^m.
\]
The vectors $x_i$ are i.i.d. $N(0,I_m)$. Since $\omega(1)=m=o(n)$, \Cref{lem:tmr} gives, with probability $1-o(1)$, there exists a signing $\sigma=\sigma(x_1,\ldots,x_n)$ such that
\begin{equation}
\left\|\sum_{i=1}^n\sigma_i x_i\right\|_\infty
\leq\sqrt{\pi n}2^{-n/m}\lesssim n^{-1},
\label{eq:mst-complete-vector-balance}
\end{equation}
where we used \eqref{eq:mst-complete-dimension} in the second inequality. 
Applying H\"older's inequality and \Cref{lem:mst-complete-envelopes}, 
\begin{align*}
\left\|\sum_{i=1}^n\sigma_if_{i, \ret}\right\|_\infty = \left\|\sum_{(j, k)\in\mathcal I}\left(\sum_{i=1}^n\sigma_i\e_{i,j,k}\right)G^H_{j,k}\right\|_\infty&\leq \left\|\sum_{(j, k)\in\mathcal I}|G^H_{j,k}|\right\|_\infty\left\|\sum_{i=1}^n\sigma_i x_i\right\|_\infty\\
&\lesssim_H \left(\sum_{j=0}^{K_2}2^{-Hj}
+\sum_{j=K_1}^{-1}2^{j(1-H)}\right)\cdot n^{-1}\\
&\lesssim_H n^{-1}.  
\end{align*} 

To bound the omitted series, we condition on the retained coefficients and hence on $\sigma$. For each omitted index $(j,k)\notin\mathcal I$, define $\xi_{j,k}\coloneqq\sum_{i=1}^n\sigma_i\varepsilon_{i,j,k}$. The all-scale white noise property of the MST expansion implies that the random variables $\{\xi_{j,k}/\sqrt n\}_{(j,k)\notin\mathcal I}$ are conditionally i.i.d. $N(0,1)$ given the retained coefficients. A standard Gaussian tail estimate and a union bound over $\Z^2$ imply that, for some constant $C>0$, with conditional probability at least $1-\Oo(n^{-4})$, 
\begin{align}
|\xi_{j,k}|\leq C\sqrt{n(\log n + \log (2+|j|+|k|))}, \qquad\forall (j, k)\notin\mathcal I. \label{xi_1}
\end{align}
For instance, $C$ can be chosen so that the sum of the individual failure probabilities is bounded by
\[
C\sum_{j,k\in\mathbb Z}n^{-5}(2+|j|+|k|)^{-4}=\Oo(n^{-5}).
\]
Conditional on \eqref{xi_1}, the truncation error under $\sigma$ can be bounded by
\begin{align}
\left\|\sum_{i=1}^n\sigma_i(f_i-f_{i,\mathrm{ret}})\right\|_\infty
&\leq \left\|\sum_{j>K_2}\sum_{k\in\mathbb Z}\xi_{j,k}G_{j,k}^H\right\|_\infty
+\left\|\sum_{j<K_1}\sum_{k\in\mathbb Z}\xi_{j,k}G_{j,k}^H\right\|_\infty
+ \left\|\sum_{j=K_1}^{K_2}\sum_{k\notin \mathcal I_j(R)}\xi_{j,k}G_{j,k}^H\right\|_\infty\nonumber\\
&\lesssim_H \sqrt{n\log n}\,2^{-HK_2} + \sqrt{n\log n}\,2^{K_1(1-H)} + \sqrt{n\log n}\,R^{-6}.\label{remainder}
\end{align}
The first two bounds follow from a similar reasoning as in \eqref{feq}. 
For the last bound, outside $\mathcal I_j(R)$ both arguments of the two Schwartz functions are at distance at least $R$ from the relevant region. We use the spatial decay of the Schwartz function $\Psi_H$. Recall that, for $j\geq 0$ and $t\in[0,1]$, we have $2^jt\in[0,2^j]$. Hence, if $k\notin \mathcal I_j(R)$, then
\[
d_{j,k}:=\operatorname{dist}(k,[0,2^j])\geq R,
\]
and both $|2^jt-k|$ and $|k|$ are at least $d_{j,k}$. Since $\Psi_H$ is Schwartz, for every $M>0$,
\[
\|G^H_{j,k}\|_\infty
\lesssim_{H,M}2^{-Hj}(1+d_{j,k})^{-M},
\qquad j\geq0,\quad k\notin \mathcal I_j(R).
\]
For $j<0$, the mean-value argument used in the proof of the wavelet estimates similarly gives
\[
\|G^H_{j,k}\|_\infty
\lesssim_{H,M}2^{(1-H)j}(1+|k|)^{-M},
\qquad j<0,\quad k\notin \mathcal I_j(R).
\]
Thus, writing $A_j=2^{-Hj}$ for $j\geq0$ and
$A_j=2^{(1-H)j}$ for $j<0$, we obtain uniformly for
$K_1\leq j\leq K_2$,
\[
\|G^H_{j,k}\|_\infty
\lesssim_{H,M}A_j
\bigl(1+\operatorname{dist}(k, \mathcal I_j(0))\bigr)^{-M},
\qquad k\notin \mathcal I_j(R).
\]
Combining this estimate with the Gaussian coefficient bound in \eqref{xi_1}, and using $|j|=\mathcal O_H(\log n)$ throughout the retained range determined by~\eqref{eq:mst-complete-parameters}, gives
\[
\begin{aligned}
\sum_{k\notin \mathcal I_j(R)}
|\xi_{j,k}|\|G^H_{j,k}\|_\infty
&\lesssim_{H,M}
\sqrt n\,A_j
\sum_{d\geq R}
\sqrt{\log n+\log(2+|j|+d)}\,(1+d)^{-M}  \\
&\lesssim_H
\sqrt{n\log n}\,A_jR^{-6},
\end{aligned}
\]
where in the last step we choose $M$ sufficiently large and absorb the additional logarithmic growth into the polynomial Schwartz tail. The particular power $6$ is arbitrary; any sufficiently large fixed power would suffice. Finally,
\[
\sum_{j=K_1}^{K_2}A_j
\leq
\sum_{j\geq0}2^{-Hj}
+\sum_{j<0}2^{(1-H)j}
\lesssim_H1,
\]
so
\[
\left\|
\sum_{j=K_1}^{K_2}\sum_{k\notin \mathcal I_j(R)}
\xi_{j,k}G^H_{j,k}
\right\|_\infty
\lesssim_H \sqrt{n\log n}\,R^{-6},
\]
which is the third bound in \eqref{remainder}. The remaining sum over $j$ is geometric (across scales) and is thus finite. The parameter choices in \eqref{eq:mst-complete-parameters} imply
\[
2^{-HK_2}\asymp n^{-H}(\log n)^H,
\qquad
2^{K_1(1-H)}\asymp n^{-H}(\log n)^H,
\qquad
R^{-6}\leq n^{-H}
\]
for all large $n$. Thus, each of the three omitted contributions above is 
\[
\Oo\left(n^{1/2-H}(\log n)^{H+1/2}\right).
\]
Combining the retained and omitted parts proves the \Cref{prop:mst-complete-upper}.
\end{proof}

Under our choice of truncation, the truncation error dominates for all $H \in (0,1)$. Although $K_2$ can be adjusted slightly to balance the errors arising from the two phases, doing so only affects constant factors: it neither changes the order of $K_2$ nor improves the asymptotic rate. We therefore do not pursue this optimization here.

\subsection{The constructive upper bound}\label{subsec:cb} 

Proposition~\ref{prop:mst-complete-upper} establishes an upper bound by balancing the unweighted retained coefficients for $m=o(n)$. Extending this approach to higher dimensions requires algorithmic balancing techniques. Since these algorithms are worst-case guarantees whose performance depends on the vector norms rather than independence, we instead balance appropriately weighted coefficients that reflect the regularity structure of the underlying functions. 

Recall the retaining set $\mathcal I = \mathcal I(K_1, K_2, R)$ defined in \eqref{rset}. 
Given positive weights $w\coloneqq \{w_{j,k}\}$, define the linear mapping $F_w: \R^m\to C([0, 1])$ by 
\begin{align}
F_w(z)\coloneqq \left\langle z, \theta(t; w)\right\rangle, \qquad \theta(t; w)\coloneqq
(w^{-1}_{j,k}G_{j,k}^H(t))^\top_{(j,k)\in\mathcal I}\in\R^m.  \label{F_w}
\end{align}
Using this definition, we rewrite $f_{i, \ret}$ in \eqref{retain} as 
\begin{align}
f_{i, \ret} = F_{w}(u_i), \qquad u_i\coloneqq (w_{j ,k}\e_{i, j, k})^\top_{(j, k)\in\mathcal I}\in\R^m, \label{key1}
\end{align} 
and consider balancing the $u_i$ instead. Under an appropriate choice of $w$ and applying \Cref{lm:ds}, we establish the following upper bound.  
\begin{proposition}\label{prop:gs-upper}
Let $H\in(0,1)$ and let $f_1,\ldots,f_n$ be independent fBm sample paths with Hurst exponent $H$. There is a randomized polynomial-time algorithm in the finite retained-coefficient real-arithmetic input model which returns $\sigma\in\{\pm1\}^n$ such that, with probability $1-o(1)$ over the sample paths and the internal randomness of the algorithm,
\begin{equation}
\left\|\sum_{i=1}^n\sigma_i f_i\right\|_\infty
\lesssim_H
\begin{cases}
n^{1/2-H}\sqrt{\log n},&0<H<1/2,\\
(\log n)^{3/2},&H=1/2,\\
\sqrt{\log n},&1/2<H<1.
\end{cases}
\label{eq:all-regime-algorithm}
\end{equation}
\end{proposition}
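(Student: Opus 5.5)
Use the weighted truncation set up in \eqref{F_w}--\eqref{key1} together with \Cref{lm:ds}. Choose a retaining set $\mathcal I=\mathcal I(K_1,K_2,R)$ with $K_2$ set by the regime, $K_1\asymp -\log n$, and $R=\lceil n^{1/4}\rceil$. Choose weights $w_{j,k}$ depending only on the scale $j$. The algorithm works in two stages.

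\textbf{First stage.} Condition on the high-probability event $\max_{i,(j,k)\in\mathcal I}|\e_{i,j,k}|\lesssim\sqrt{\log n}$. This holds by a union bound, since $m=|\mathcal I|$ is polynomial in $n$. On this event $\|u_i\|_2\le\zeta\coloneqq \sqrt{\log n}\,(\sum_{(j,k)\in\mathcal I}w_{j,k}^2)^{1/2}$. Apply \Cref{lm:ds} to the $u_i$. It gives, with probability $\ge 1/2$,
\[
\Bigl\|\sum_i\sigma_if_{i,\ret}\Bigr\|_\infty\le c\,\zeta\, C_{F_w}.
\]
Repeat the algorithm $\Oo(\log n)$ times independently. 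For each output, evaluate $\sum_i\sigma_if_{i,\ret}$ on the grid $\mathcal T_{K_2}$ of \Cref{lem:sampling}, which has size $\lesssim 2^{K_2}+1$, and keep the best output. By \Cref{lem:sampling} its sup norm is within a factor $2$ of the grid maximum, so the success probability is boosted to $1-o(1)$ in polynomial time.

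\textbf{Bounding $C_{F_w}$.} Here $F_w(z)=\sum_{(j,k)\in\mathcal I}z_{j,k}w_j^{-1}G^H_{j,k}$. Bound it scale by scale: Dudley's entropy bound (or a chaining argument) together with \eqref{mst2}--\eqref{mst3} gives the scale-$j$ contribution to the expected sup as $\lesssim w_j^{-1}A_j\sqrt{j+1}$ for $j\ge0$, using a net of size $\approx 2^j$. Summing, $C_{F_w}\lesssim\sum_j w_j^{-1}A_j\sqrt{|j|+1}$, while $\zeta^2\approx\log n\sum_j w_j^2 N_j$. Now pick the weights.
\begin{itemize}
\item For $H>1/2$: take $w_j=A_j^{1/2}N_j^{-1/2}$ for $j\ge0$, i.e.\ $w_j=2^{-j(H+1)/2}$. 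Then $\sum_j w_j^2N_j=\sum_j 2^{-Hj}<\infty$, and $\sum_j w_j^{-1}A_j\sqrt j=\sum_j 2^{(1-H)j/2}\sqrt j$, which grows like $2^{(1-H)K_2/2}$. This is too large, so I would instead work with the tradeoff by Cauchy--Schwarz:
\[
\zeta\, C_{F_w}\gtrsim\sqrt{\log n}\,\sum_j A_j\sqrt{N_j}\sqrt{|j|}.
\]
Equality is approached when $w_j^2N_j\propto w_j^{-1}A_j$. This gives $\sum_j 2^{(1/2-H)j}\sqrt{j}$, which is $\Oo(1)$ for $H>1/2$ and yields the $\sqrt{\log n}$ rate. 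The $\sqrt{|j|}$ chaining factor must be removed, or hidden in the geometric sum, for this to work.
\item For $H=1/2$: the scale sum has $K_2\asymp\log n$ equal terms, giving $(\log n)\cdot\sqrt{\log n}$.
\item For $H<1/2$: the sum is dominated by the top scale, $2^{(1/2-H)K_2}$.
\end{itemize}

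\textbf{Second stage: truncation.} Conditional on $\sigma$, the omitted coefficients $\xi_{j,k}=\sum_i\sigma_i\e_{i,j,k}$ are i.i.d.\ $N(0,n)$ as in the proof of \Cref{prop:mst-complete-upper}, so the tail is bounded exactly as in \eqref{remainder}. The high-frequency tail is $\sqrt{n\log n}\,2^{-HK_2}$. Taking $K_2=\log_2 n$ makes it $n^{1/2-H}\sqrt{\log n}$, and the retained part $2^{(1/2-H)K_2}=n^{1/2-H}$ is then balanced against it for $H<1/2$. For $H\ge1/2$ take $K_2=\lceil\log_2 n/H\rceil$, so the tail is $\Oo(\sqrt{\log n})$. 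The low-frequency tail $2^{K_1(1-H)}\sqrt{n\log n}$ is negligible with $K_1=-\lceil\log_2 n/(1-H)\rceil$; the few scales $j<0$ contribute $\Oo(1)$ per scale to $\zeta C_{F_w}$ with geometric decay.

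\textbf{Main obstacle.} The hardest step is the sharp Gaussian-complexity estimate for $C_{F_w}$ without an extra $\sqrt{\log}$ loss per scale. I would avoid chaining. Use the fact that the $w_j^{-1}$-weighted Gaussian series at scale $j$ has standard deviation $\lesssim w_j^{-1}A_j$ pointwise, by \eqref{mst2}. Then bound the expected sup over the grid $\mathcal T_{K_2}$ via \Cref{lem:sampling}. This costs only a single global $\sqrt{\log n}$, which is what produces the stated exponents $1/2$, $3/2$, $1/2$.
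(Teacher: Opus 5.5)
\textbf{There are genuine gaps.} Your architecture is the paper's: weighted retained coefficients, \Cref{lm:ds}, boosting with the grid of \Cref{lem:sampling}, and conditional Gaussian control of the omitted coefficients. The quantitative bookkeeping, however, breaks the claimed rates in three places.

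\emph{Scale-only weights.} With weights depending only on $j$ and $R=\lceil n^{1/4}\rceil$, every scale carries $2R+\Oo(1)$ coefficients at full weight. This includes each of the $\asymp\log n$ negative scales. Hence $\|u_i\|_2^2\approx\sum_j w_j^2(N_j+2R)$, not $\sum_j w_j^2N_j$, and in particular $\zeta\gtrsim w_0\sqrt R$. On the other hand, $C_{F_w}\geq\sqrt{2/\pi}\,\sup_t\|\theta(t;w)\|_2\geq\sqrt{2/\pi}\,w_0^{-1}\sup_t|G^H_{0,0}(t)|$. So $\zeta\,C_{F_w}\gtrsim\sqrt R=n^{1/8}$ for every scale-only choice, which already exceeds the target for all $H>3/8$. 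Shrinking $R$ does not help: $R$ must grow, polynomially given only Schwartz decay of $\Psi_H$, to make the far-shift truncation error negligible. The missing idea is the shift-dependent factor in $w_{j,k}=\sqrt{\lambda_j\rho_{j,k}}$. By \eqref{mst1} it keeps $\sum_k w_{j,k}^2\lesssim\lambda_jN_j$ independently of $R$. By \eqref{mst3}, the decay of $\Psi_H$ absorbs $\rho_{j,k}^{-1}$ in $\|\theta(t;w)\|_2$. Choosing $\lambda_j=A_j/\sqrt{N_j}$ then makes both quantities $\lesssim Q$ from \eqref{Qbdd}.

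\emph{Crude norm bound.} The bound $\|u_i\|_2\le\max|\e_{i,j,k}|\,(\sum w_{j,k}^2)^{1/2}$ gives $\zeta\asymp\sqrt{Q\log n}$. Weighted chi-square (Laurent--Massart) concentration instead gives $\max_i\|u_i\|_2\lesssim\sqrt{Q+\log n}$. For $H\le 1/2$ we have $Q\gtrsim\log n$, so your bound costs an extra $\sqrt{\log n}$. Even with the best available $C_{F_w}\lesssim\sqrt{Q\log n}$, it yields only $n^{1/2-H}\log n$ and $(\log n)^2$.

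\emph{Bounding $C_{F_w}$.} Your per-scale bound $\sum_j w_j^{-1}A_j\sqrt{|j|+1}$ is an $\ell^1$ sum across scales, paired with an $\ell^2$ sum in $\zeta$. So your Cauchy--Schwarz lower bound is not attained, and your stated equality condition is wrong. At $H=1/2$, H\"older's inequality shows $(\sum_j w_j^2N_j)^{1/2}\sum_j\sqrt{j+1}/(w_j\sqrt{N_j})\geq(\sum_{j\le K_2}(j+1)^{1/3})^{3/2}\asymp(\log n)^2$ for every choice of scale weights. Your global-grid remedy is correct for $H\le 1/2$, and it is a genuinely simpler alternative to the paper's Dudley computation. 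By \Cref{lem:sampling} and a Gaussian maximal inequality over $|\mathcal T_{K_2}|\lesssim n$ points, $C_{F_w}\lesssim\sqrt{\log n}\,\sup_t\|\theta(t;w)\|_2\lesssim\sqrt{Q\log n}$, since the pointwise variance is the $\ell^2$-in-scale quantity in \eqref{uF}. Combined with $\zeta\lesssim\sqrt{Q+\log n}$, this gives $n^{1/2-H}\sqrt{\log n}$ and $(\log n)^{3/2}$, matching the paper.

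For $H>1/2$, however, the grid only gives $C_{F_w}\lesssim\sqrt{\log n}$. Meanwhile $\zeta\geq w_{0,0}\max_i|\e_{i,0,0}|\approx w_{0,0}\sqrt{2\log n}$ and $\|\theta(t;w)\|_2\geq w_{0,0}^{-1}|G^H_{0,0}(t)|$. So this route delivers $\log n$, not $\sqrt{\log n}$. In this regime you need $C_{F_w}=\Oo(1)$. That requires exploiting the H\"older modulus $d_{F_w}(s,t)\lesssim|t-s|^{(H-1/2)/2}$ in Dudley's integral, as the paper does. Alternatively, use your per-scale sum with the paper's weights, where the factor $\sqrt{j}$ is absorbed by the geometric decay $2^{-(H-1/2)j/2}$. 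Your closing claim that a single global $\sqrt{\log n}$ produces all three exponents is therefore false for $H>1/2$.
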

We first provide a heuristic explanation of how to choose $w$ and then prove Proposition~\ref{prop:gs-upper}. 

\paragraph{Heuristics on the choice of $w$.}

A good choice of the weights $w$ should balance the competing objectives of keeping $\|u_i\|_2$ small and controlling $\|F_w\|$, the latter being crucial for bounding $C_{F_w}$ in \eqref{myCF}. For ease of analysis, we choose
\begin{align}
w_{j,k}=\sqrt{\lambda_j\rho_{j,k}},\label{sweight}
\end{align}
where $\rho_{j,k}$ is the soft-thresholding sequence defined in \eqref{notation}, which equalizes the weights of the coefficients at scale $k\in [0, 2^j]$ while gradually downweighting shifts away from the interval, and $\lambda_j$ are global scale parameters. Under such choice and using \Cref{lem:mst-complete-envelopes}, 
\begin{equation}\label{uF}
\begin{aligned}
\E[\|u_i\|_2^2] &=\E[\|u_1\|_2^2] = \sum_{(j, k)\in\mathcal I}\lambda_j\rho_{j,k}\lesssim \sum_{j=K_1}^{K_2}\lambda_j N_j,\\
\|F_w\|^2&\leq \sup_{t\in [0, 1]}\|\theta(t; w)\|_2^2 = \sup_{t\in [0, 1]}\sum_{(j, k)\in\mathcal I}
\frac{|G_{j,k}^H(t)|^2}{\lambda_j\rho_{j,k}}\\
&=\sup_{t\in [0, 1]}\sum_{(j, k)\in\mathcal I}
\frac{|G_{j,k}^H(t)-G_{j,k}^H(0)|^2}{\lambda_j\rho_{j,k}}\lesssim_H \sum_{j=K_1}^{K_2}\frac{A_j^2}{\lambda_j}.  
\end{aligned}
\end{equation}
To strike a balance between the two sums, we equate the summands on the same scale, yielding 
\begin{align}
\lambda_j = \frac{A_j}{\sqrt{N_j}}. \label{lambdah}
\end{align} 
\paragraph{Proof of Proposition~\ref{prop:gs-upper}.}
For the retaining set $\mathcal I = \mathcal I(K_1, K_2, R)$ in \eqref{rset}, set
\begin{equation}
K_1\coloneqq -\left\lfloor\frac{H}{1-H}\log_2n\right\rfloor,
\qquad
K_2\coloneqq\lfloor\log_2 n\rfloor,
\qquad
R\coloneqq n,
\label{eq:all-regime-parameters}
\end{equation}
Its cardinality is $m=\Oo(n\log n)$, so the finite-dimensional operations below have polynomial complexity.

We now fix $w$ in \eqref{sweight} with $\lambda_j$ chosen in \eqref{lambdah}. According to \eqref{uF}, up to a multiplicative constant depending only on $H$, both $\E[\|u_i\|_2^2]$ and $\|F_w\|^2$ are bounded by  
\begin{equation}\label{Qbdd}
\begin{aligned}
Q\coloneqq \sum_{j=K_1}^{K_2}\lambda_j N_j = \sum_{j=K_1}^{K_2}A_j \sqrt{N_j}&=\sum_{j=K_1}^{-1}2^{(1-H)j}+\sum_{j=0}^{K_2}2^{(-H+1/2)j}\\
&\lesssim_H\begin{cases}
n^{1/2-H},&0<H<1/2,\\
\log n,&H=1/2,\\
1,&1/2<H<1.
\end{cases}
\end{aligned}
\end{equation}
Write $\alpha_{j,k}=w_{j,k}^2=\lambda_j\rho_{j,k}$. Then $0<\alpha_{j,k}\leq1$ and
$T\coloneqq\sum_{(j,k)\in\mathcal I}\alpha_{j,k}\lesssim_H Q$. The weighted Laurent--Massart inequality gives
\[
\mathbb P\left\{\|u_i\|_2^2>T+2\sqrt{Tt}+2t\right\}\leq e^{-t}.
\]
Taking $t=4\log n$ and applying a union bound over $i$ yields, with probability at least $1-\Oo(n^{-3})$,
\begin{align}
\max_i\|u_i\|_2\lesssim_H\sqrt{Q+\log n}.
\label{ubdd}
\end{align}

Conditional on \eqref{ubdd}, recalling the linear functional representation of $f_{i, \ret}(t)$ in \eqref{F_w}-\eqref{key1} and applying \Cref{lm:ds}, there exists a randomized polynomial-time algorithm that returns $\sigma$ such that, with probability at least $1/2$, 
\begin{align}\label{ksjg}
\left\|\sum_{i=1}^n\sigma_if_{i, \ret}\right\|_\infty\lesssim_H \sqrt{Q+ \log n}\cdot C_{F_w}, \qquad C_{F_w}\coloneqq \E_{z\sim N(0, I_m)}[\|F_w(z)\|_\infty],
\end{align}
where $F_w$ is defined in \eqref{F_w}. To further bound $C_{F_w}$, note that $F_w(z)$ is a Gaussian process indexed by $t\in [0, 1]$ started from zero. Define the square of the $L^2$-metric induced by $F_w$ on $[0, 1]$\footnote{This is equivalent to the canonical metric induced by the subgaussian norm for Gaussian processes.}: 
\begin{align}
d^2_{F_w}(s, t) \coloneqq \E[|F_w(z)(s)-F_w(z)(t)|^2]= \|\theta(s; w)-\theta(t; w)\|^2_2 = \sum_{(j, k)\in\mathcal I}
\frac{|G_{j,k}^H(s)-G_{j,k}^H(t)|^2}{\lambda_j\rho_{j,k}}, 
\end{align}
and let $\mathcal N([0, 1], d_{F_w}, \e)$ denote the $\e$-covering number of $[0, 1]$ under $d_{F_w}$. By Dudley's inequality \cite[Remark 8.15]{vershynin2018high},   
\begin{align}
C_{F_w}\lesssim_H\int_0^{\mathrm{diam}([0, 1], d_{F_w})}\sqrt{\log \mathcal N([0, 1], d_{F_w}, \e)} \ \mathrm d\e. \label{dudley}
\end{align}
By the triangle inequality and \eqref{uF}, $\mathrm{diam}([0,1],d_{F_w})\leq 2\|F\|\lesssim_H\sqrt Q$. In particular, there exists a constant $C_H>0$ such that
\begin{align}
\mathrm{diam}([0, 1], d_{F_w})\leq C_H\sqrt{Q}. \label{diam}
\end{align} 
We now use the quadratic estimates in \eqref{mst3} to bound $d^2_{F_w}(s, t)$, from which we obtain the bounds on the covering number. Denoting $r = |t-s|$, using \eqref{mst3}, and recalling the choice of $w$ in \eqref{uF}, we obtain
\begin{align}
d^2_{F_w}(s, t)&\lesssim_H r^2\sum_{j=K_1}^{-1}2^{(1-H)j} + \sum_{j=0}^{K_2} 2^{(-H+1/2)j}\min\{1, 2^{2j}r^2\}. \label{tom}
\end{align}

For $H\leq 1/2$, bounding $2^{2j}$ by $2^{2K_2}$, dropping $1$ in the minimum, and recalling the definition of $Q$ in \eqref{Qbdd}, $d_{F_w}(s, t) \lesssim_H 2^{K_2} \sqrt{Q}r$. Consequently, 
\begin{align}
\mathcal N([0,1],d_{F_w},r)\leq1+\frac{C_H2^{K_2}\sqrt Q}{r},\qquad r>0. \label{cn1}
\end{align}
Substituting \eqref{diam} and \eqref{cn1} into \eqref{dudley}, and then scaling $r=C_H\sqrt Q\,u$, yields
\begin{align*}
C_{F_w}
&\lesssim_H\int_0^{C_H\sqrt Q}
\sqrt{\log\left(1+C_H2^{K_2}\sqrt Q/r\right)}\,\mathrm dr = C_H\sqrt Q\int_0^1\sqrt{\log\left(1+2^{K_2}/u\right)}\,\mathrm du
\lesssim_H\sqrt{QK_2}.
\end{align*}

For $H>1/2$, denoting $\tau = H-1/2>0$, further computing the upper bound in \eqref{tom} by separating the sum based on the minimum,  
\begin{align*}
d^2_{F_w}(s, t)\lesssim_H \sum_{j=0}^{\lfloor\log_2(1/r)\rfloor} 2^{(2-\tau)j}r^2 + \sum_{j=\lfloor\log_2(1/r)\rfloor+1}^{K_2} 2^{-\tau j}\lesssim_H r^{\tau}, 
\end{align*}
which gives
\begin{align}
\mathcal N([0,1],d_{F_w},r)\leq1+C_Hr^{-2/\tau},\qquad r>0. \label{cn2}
\end{align}
Since $Q\lesssim_H1$ in this regime, substituting \eqref{diam} and \eqref{cn2} into \eqref{dudley} gives
\begin{align*}
C_{F_w}\lesssim_H\int_0^{C_H}\sqrt{\log(1+C_Hr^{-2/\tau})}\,\mathrm dr\lesssim_H1.
\end{align*}
Putting the above estimates together,  
\begin{align}\label{jsgf}
\left\|\sum_{i=1}^n\sigma_if_{i, \ret}\right\|_\infty\lesssim_H \begin{cases}
n^{1/2-H}\sqrt{\log n}&0<H<1/2\\
(\log n)^{3/2}&H=1/2\\
\sqrt{\log n}&1/2<H<1.
\end{cases}
\end{align}

To show that a signing satisfying \eqref{jsgf} can be found in polynomial time, note that a single execution of the Gram--Schmidt walk on the weighted retained coefficients produces such a signing with probability at least $1/2$. Repeating the algorithm independently $\Oo(\log n)$ times boosts the success probability to at least $1-\Oo(n^{-1})$. Among the sampled sign vectors, we then select the one minimizing $\|\sum_{i=1}^n \sigma_i f_{i,\ret}\|_\infty$. By \Cref{lem:sampling}, this $L^\infty$ norm can be evaluated on an equispaced discretization of size $\Oo(2^{K_2})=\Oo(n)$, incurring only a multiplicative factor of $2$ in the discrepancy bound. Consequently, the boosting and discretization steps together increase the computational cost by only a polynomial factor for every fixed $H$.

It remains to control the omitted coefficients for the signing selected by the boosting procedure. That signing is measurable with respect to the retained coefficients and the internal randomness of the algorithm. Hence, conditional on this information, for every omitted index $(j,k)\notin\mathcal I$, $\xi_{j,k}\coloneqq\sum_{i=1}^n\sigma_i\e_{i,j,k}\sim N(0,n)$, and the family $(\xi_{j,k})_{(j,k)\notin\mathcal I}$ is conditionally independent. The same Gaussian union bound and the three envelope estimates used in \eqref{remainder} therefore give, with conditional probability $1-\Oo(n^{-4})$,
\begin{align*}
\left\|\sum_{j>K_2}\sum_k\xi_{j,k}G_{j,k}^H\right\|_\infty
&\lesssim_H\sqrt{n\log n}\,2^{-HK_2},\\
\left\|\sum_{j<K_1}\sum_k\xi_{j,k}G_{j,k}^H\right\|_\infty
&\lesssim_H\sqrt{n\log n}\,2^{K_1(1-H)},\\
\left\|\sum_{j=K_1}^{K_2}\sum_{k\notin \mathcal I_j(R)}\xi_{j,k}G_{j,k}^H\right\|_\infty
&\lesssim_H\sqrt{n\log n}\,R^{-2}.
\end{align*}
With the choices in \eqref{eq:all-regime-parameters}, this is
\begin{align}
\Oo\bigl(n^{1/2-H}\sqrt{\log n}+n^{-3/2}\sqrt{\log n}\bigr).
\label{jsgl}
\end{align}
This matches the retained estimate for $H<1/2$ and is absorbed for $H\geq1/2$. Combining \eqref{jsgf} and \eqref{jsgl}, together with the coefficient-concentration and boosting probabilities, proves the proposition.  
  
\subsection{The lower bound}\label{subsec:lb} 

Our proof for the lower bound does not rely on any specific expansion of fBm and instead follows from a first-moment computation. The key observation is that $\frac{1}{\sqrt{n}}\sum_{i=1}^n\sigma_i f_i$ has the same distribution as $f_1$ for every fixed $\sigma$, and is therefore itself an fBm. This invariance property is a general property of Gaussian processes. The result then follows from the classical small-ball probability estimates of fBm.

For $a>0$, define 
\begin{align}\label{Zn}
&Z_n(a)\coloneqq\sum_{\sigma\in\{\pm 1\}^n}\mathbf 1\{\mathcal A(\sigma)\}, &\mathcal A(\sigma)\coloneqq \left\{\left\|\sum_{i=1}^n \sigma_i f_i\right\|_{\infty}\leq a\right\}. 
\end{align}
Since the $f_i$ are i.i.d. Gaussian processes (i.e., $B^H$), for any fixed signing $\sigma$, 
\begin{equation}
\frac{1}{\sqrt{n}}\sum_{i=1}^n\sigma_i f_i
\ \stackrel{d}{=}\ B^H.
\label{eq:mst-complete-fixed-sign-law}
\end{equation}
Consequently, by Markov's inequality,
\begin{align}
\mathbb P\{\disc(f_1,\ldots,f_n)\leq a\}\leq \mathbb E [Z_n(a)]=2^n\,
\mathbb P\left\{\|B^H\|_\infty\leq\frac{a}{\sqrt n}\right\}.
\label{eq:mst-complete-first-moment}
\end{align}
Then an fBm small-ball estimate \cite[Theorem 4.6]{LiShao2001} gives constants $C_1=C_1(H)>0$ and $\e<1$ such that
$\Pp\{\|B^H\|_\infty\leq\varepsilon\}
\leq\exp\{-C_1\e^{-1/H}\}$. 
Taking $a=C_Hn^{1/2-H}$ gives $a/\sqrt n=C_Hn^{-H}=o(1)$ and, for all sufficiently large $n$,
\[
\E [Z_n(a)]
\leq
\exp\left\{\bigl(\log2-C_1C_H^{-1/H}\bigr)n\right\}.
\]
Choose $C_H>0$ sufficiently small that $C_1C_H^{-1/H}>\log2$. Then the exponent is at most $-\gamma_Hn$ for some $\gamma_H>0$, and \eqref{eq:mst-complete-first-moment} proves the claim.

For $H = 1/2$, let $B$ be a standard Brownian motion. The constant $C_1$ can be made precise by using the exact asymptotics for all small $\e$: 
\begin{align}\label{exact}
\Pp\left\{\|B\|_{\infty}\le \varepsilon\right\}=
\frac{4}{\pi}
\sum_{j=0}^{\infty}
\frac{(-1)^j}{2j+1}
\exp\left\{
-\frac{(2j+1)^2\pi^2}{8\e^2}
\right\}= \left(\frac{4}{\pi} + o(1)\right)\exp\left\{-\frac{\pi^2}{8\e^2}\right\}.
\end{align}
The distribution of $\sup_{s\le t}|B(s)|$ is given, in an equivalent image-series form, in \cite[Part II, Ch.~3, Formula~1.1.4]{BorodinSalminen2002}; the displayed eigenfunction series is the equivalent Dirichlet heat-kernel expansion on $(-\varepsilon,\varepsilon)$. Therefore, for every fixed $\delta\in(0, a_0)$, taking $c=a_0-\delta$ gives $\E[Z_n(c)]=o(1)$.

%% file: critical-regime.tex
\section{Critical regime}\label{sec:(ii)}

\subsection{A second-moment bound}\label{sec:1/2}

We now consider the regime $H=1/2$ and provide a refined upper bound using a second-moment computation. It is enough to prove the result under $a_n\leq n^{1/3}$. Indeed, set
\[
\widetilde a_n\coloneqq\min\{a_n,n^{1/3}\}.
\]
Monotonicity gives
$\mathbb P\{\disc(f_1,\ldots,f_n)\leq a_n\}\geq
\mathbb P\{\disc(f_1,\ldots,f_n)\leq\widetilde a_n\}$.
Whenever truncation occurs,
\[
0\leq
\sqrt{1-\frac{\beta}{a_n^2}}
-\sqrt{1-\frac{\beta}{\widetilde a_n^2}}
\leq 1-\sqrt{1-\beta n^{-2/3}}
=\Oo(n^{-2/3}).
\]
Thus the desired lower bound for $\widetilde a_n$ implies the stated bound for $a_n$ after absorbing this deterministic error into $o(1)$. We henceforth relabel $\widetilde a_n$ as $a_n$, so that $a_n^2=o(n)$. The following lemmas are needed for the subsequent analysis. 

\begin{lemma}\label{lm:corB}
Let $B(t) = (B_1(t), B_2(t))^\top$ be a planar Brownian motion started from the origin with constant correlation $\rho\in (-1, 1)$. Define 
\begin{align}
&\mathcal R(\rho)\coloneqq T(\rho)^{-1}([-1 ,1]^2)\subset\R^2, &T(\rho) = \begin{pmatrix} 1 & 0 \\ \rho & \sqrt{1-\rho^2}\label{Rrho}
\end{pmatrix}. 
\end{align}
Let $0<\lambda_1(\rho) < \lambda_2(\rho)\leq \dots$ be the Dirichlet eigenvalues of the negative Laplacian $(-\Delta)$ on $\mathcal R(\rho)$, and let $\{\phi_k(\cdot; \rho)\}_{k=1}^\infty$ be the corresponding $L^2$-normalized eigenfunctions. The following results hold:
\begin{enumerate} 
\item [(i)] Fixing $\delta>0$, the small-ball probability asymptotics hold uniformly for $|\rho|\leq 1-\delta$ as $\e\to 0$:  
\begin{align}
\Pp\left\{B(t)\in [-\e, \e]^2, \forall t\in [0, 1]\right\} = C_1(\rho)\exp\left\{-\frac{\lambda_1(\rho)}{2\varepsilon^2}\right\} (1 + o(1))\label{goal}
\end{align}
where the constant $C_1(\rho)$ is strictly positive and given by
\begin{align}
C_1(\rho)= \phi_1(0; \rho) \int_{\mathcal R(\rho)} \phi_1(x; \rho) \mathrm{d} x, \label{myc1}
\end{align}
and the $o(1)$ term depends on $\delta$. 
\item [(ii)] The principal eigenvalue $\lambda_1(\rho)$ is real-analytic, even, concave for $\rho\in(-1,1)$, and satisfies the following lower bound:
\begin{align}
\lambda_1(\rho)\geq\frac{\pi^2}{4}(1+\sqrt{1-\rho^2}). \label{newguy}
\end{align}
Moreover, $\lambda_1'(0)=0$, $\lambda_1'(\rho)<0$ for $0<\rho<1$, $\lambda_1'(\rho)>0$ for $-1<\rho<0$, and $-\lambda_1''(0)\in [1.965,1.966]$. 
\item [(iii)] $C_1(\rho)$ is continuous for $\rho\in(-1, 1)$. 
\end{enumerate}
\end{lemma}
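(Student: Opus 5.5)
The plan is to reduce everything to a standard Brownian motion on a fixed parallelogram and then use the Dirichlet heat-kernel spectral expansion. Write $B=T(\rho)W$ with $W$ a standard planar Brownian motion. Then $B(t)\in[-\e,\e]^2$ if and only if $W(t)\in\e\mathcal R(\rho)$. Brownian scaling turns this into $W(s)\in\mathcal R(\rho)$ for $s\in[0,\e^{-2}]$. Let $p^D_s$ be the Dirichlet heat kernel of $\tfrac12\Delta$ on the bounded convex (hence Lipschitz) domain $\mathcal R(\rho)$. Its spectral expansion gives
\begin{align*}
\Pp\{\cdots\}=\sum_{k\geq1}e^{-\lambda_k(\rho)s/2}\phi_k(0;\rho)\int_{\mathcal R(\rho)}\phi_k(x;\rho)\,\mathrm dx,\qquad s=\e^{-2}.
\end{align*}
Part (i) follows once the tail $k\geq2$ is shown to be $o(e^{-\lambda_1 s/2})$ uniformly in $|\rho|\leq1-\delta$. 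For this I would write $p^D_s=p^D_{1}*p^D_{s-1}$ and bound the tail by $e^{-\lambda_2(s-1)/2}\,\|p^D_{1/2}(0,\cdot)\|_2\,|\mathcal R(\rho)|^{1/2}$. It then suffices to have a uniform spectral gap $\inf_{|\rho|\le1-\delta}(\lambda_2-\lambda_1)>0$. The domains $\mathcal R(\rho)$ vary continuously in Hausdorff distance, and they are uniformly convex-Lipschitz with area $4/\sqrt{1-\rho^2}$. So the eigenvalues are continuous in $\rho$, and simplicity of $\lambda_1$ on a connected domain gives the gap on the compact range. Positivity of $C_1$ holds because $\phi_1>0$ in the interior and $0$ is interior. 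Continuity of $C_1$ in (iii) follows from convergence of the normalized principal eigenfunctions under the domain perturbation. I would make this concrete by pulling back to the fixed square via the linear map $T(\rho)$: the operator becomes $-\nabla\cdot(\Sigma(\rho)\nabla)$ on $[-1,1]^2$, with $\Sigma=TT^\top$ analytic in $\rho$. Analytic perturbation theory (Kato) for the simple eigenvalue then gives analytic $\lambda_1(\rho)$, analytic eigenprojection, and hence continuity of $C_1(\rho)$ and of the uniform estimates.

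For (ii), the pulled-back form is $\lambda_1(\rho)=\min_{u\in H_0^1([-1,1]^2),\|u\|=1}\int(u_x^2+2\rho u_xu_y+u_y^2)$. The correct constants come from computing $\Sigma$ from $T^{-1}$; the quadratic form has the correct scaling factor by the change of variables, and I would check this against the boundary case $\rho=0$ where $\lambda_1=\pi^2/2$. Real-analyticity follows as above. Evenness follows from the reflection $u(x,y)\mapsto u(x,-y)$, which maps $\rho$ to $-\rho$. Concavity holds because $\lambda_1$ is a minimum of functions affine in $\rho$. Concavity plus evenness give $\lambda_1'(0)=0$ and monotonicity. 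Strictness of $\lambda_1'<0$ on $(0,1)$ follows from analyticity and non-constancy, since $\lambda_1(\rho)$ is smaller than $\lambda_1(0)$ for $\rho>0$ by using the $\rho=0$ eigenfunction as a test function and noting the cross term is nonzero after a tilt.

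For the lower bound \eqref{newguy}, I would diagonalize the form. Write it as $\tfrac{1+\rho}{2}(u_x+u_y)^2+\tfrac{1-\rho}{2}(u_x-u_y)^2$ and compare along the two directions. The endpoint constants $\pi^2/4\cdot 2$ at $\rho=0$ and $\pi^2/4$ as $\rho\to1$ suggest splitting the form as $\tfrac{\pi^2}{4}$-weighted one-dimensional Poincaré inequalities along the coordinate directions, $\int u_x^2\geq\frac{\pi^2}{4}\int u^2$. Combined with $\int(u_x^2+u_y^2+2\rho u_xu_y)\geq(1-|\rho|)\int(u_x^2+u_y^2)+|\rho|\int(u_x\pm u_y)^2$, this yields $\pi^2/4\cdot(1+\sqrt{1-\rho^2})$ after optimizing the splitting weight.

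\textbf{Main obstacle.} This last inequality is the hard part: the naive split gives only $(1-|\rho|)$, so the optimal weighted decomposition must achieve $\sqrt{1-\rho^2}$. The value $-\lambda_1''(0)$ is obtained from second-order perturbation, $-\lambda_1''(0)=8\sum_{k}\frac{\langle\partial_x\partial_y\phi_1,\phi_k\rangle^2}{\lambda_k-\lambda_1}$ over the sine basis, and it is evaluated numerically with rigorous truncation error.
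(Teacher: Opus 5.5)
The genuine gap is the lower bound \eqref{newguy}. You leave it open, and the route you sketch cannot close it. Suppose you split $\Sigma(\rho)$ into nonnegative multiples of $bb^\top$, with $b$ along the coordinate axes and the two diagonals. You then apply the one-dimensional Dirichlet Poincar\'e inequality along chords of $[-1,1]^2$. The axis directions give the constant $\pi^2/4$ but carry no off-diagonal term. The diagonals carry the off-diagonal term but only give $\pi^2/8$, since their chords have length up to $2\sqrt2$. Optimizing the weights subject to $\Sigma_{11}=\Sigma_{22}=1$ and $\Sigma_{12}=\rho$ yields at best $\frac{\pi^2}{4}(2-|\rho|)$. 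This is strictly below $\frac{\pi^2}{4}(1+\sqrt{1-\rho^2})$ for $0<|\rho|<1$. Its deficit from $\lambda_1(0)$ is linear rather than quadratic in $|\rho|$, so it would not even suffice for the paper's later second-moment estimate.

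The missing idea is to use two \emph{tilted} directions with unit weights. For $0\le\rho<1$, write $\rho=\sin 2\theta$ with $\theta\in[0,\pi/4)$, and set $b_1=(\cos\theta,\sin\theta)^\top$, $b_2=(\sin\theta,\cos\theta)^\top$. Then
\[
b_1b_1^\top+b_2b_2^\top=\begin{pmatrix}1&\rho\\ \rho&1\end{pmatrix},\qquad E(u;\rho)=\int_{\mathcal R(0)}(b_1\cdot\nabla u)^2+(b_2\cdot\nabla u)^2\,\mathrm dx\,\mathrm dy .
\]
Each $b_i$ makes angle $\theta$ with a coordinate axis, so every chord of $[-1,1]^2$ in direction $b_i$ has length at most $2/\cos\theta$. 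Fubini in rotated coordinates plus the 1D Poincar\'e inequality then gives $\int(b_i\cdot\nabla u)^2\ge\frac{\pi^2\cos^2\theta}{4}\|u\|_{L^2}^2$. Summing gives $\frac{\pi^2}{2}\cos^2\theta=\frac{\pi^2}{4}(1+\cos2\theta)=\frac{\pi^2}{4}(1+\sqrt{1-\rho^2})$, and the case $\rho<0$ follows by evenness. Tilting produces the full cross term $\sin2\theta$ at the cost of only the factor $\cos^2\theta$ in the Poincar\'e constant. The axis/diagonal split cannot do this.

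A smaller gap is in (iii). Analyticity of the eigenprojection in $L^2(\mathcal R(0))$ does not by itself give continuity of $\psi_1(0;\rho)$. Point evaluation is not continuous on $L^2$, nor on $H^1$ in two dimensions. You need a regularity upgrade, as the paper does. The difference $\psi_1(\cdot;\rho)-\psi_1(\cdot;\rho')$ solves a uniformly elliptic equation whose data are controlled by $\|\psi_1(\cdot;\rho)-\psi_1(\cdot;\rho')\|_{L^2}$, $|\lambda_1(\rho)-\lambda_1(\rho')|$ and $|\rho-\rho'|$. Interior $H^2$ estimates on a ball around the origin, followed by $H^2\hookrightarrow C^0$, give the pointwise continuity. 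This step matters because your uniform version of (i) divides by $\inf_{|\rho|\le1-\delta}C_1(\rho)$.

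Apart from these two points, your part (i) is a valid alternative to the paper's. You bound the non-principal part by a semigroup $L^2$ estimate of the form $\|p_1(0,\cdot)\|_{L^2}\,e^{-\lambda_2(\rho)(s-1)/2}|\mathcal R(\rho)|^{1/2}$. You get the uniform gap from continuity of $\lambda_1,\lambda_2$ in $\rho$ (min--max for the pulled-back forms) plus simplicity on a compact range. The paper instead proves uniform convergence of the full spectral series via Li--Yau eigenvalue bounds and $L^\infty$ eigenfunction bounds, and invokes the Andrews--Clutterbuck fundamental-gap theorem. Your route is shorter and avoids pointwise convergence of the series. The paper's route yields explicit constants, which the statement does not require.
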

\begin{proof}
\Cref{sec:corB}.  
\end{proof}

\begin{lemma}\label{lem:binom}
For $|k|\leq n^{3/5}$,
\begin{align}
\frac{1}{2^n}{n\choose{n/2+k}}=\sqrt{\frac{2}{\pi n}}e^{-2k^2/n}\exp\left\{\mathcal O\left(\frac{1}{n}+\frac{k^4}{n^3}\right)\right\} = \sqrt{\frac{2}{\pi n}}e^{-2k^2/n}(1+o(1)).
\end{align}
\end{lemma}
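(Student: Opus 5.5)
We will prove the local central limit estimate by Stirling's formula with explicit error terms, keeping track of the exponent up to order $k^4/n^3$. Write $m_1 = n/2+k$ and $m_2=n/2-k$ (when $n$ is odd, $k$ ranges over half-integers so that $m_1, m_2$ are integers; the argument is identical). By Stirling's formula $\log N! = N\log N - N + \tfrac12\log(2\pi N) + \Oo(1/N)$, applied to $n!$, $m_1!$, $m_2!$, we obtain
\begin{align*}
\log\frac{1}{2^n}{n\choose m_1} = -n\log 2 + n\log n - m_1\log m_1 - m_2\log m_2 + \frac12\log\frac{n}{2\pi m_1 m_2} + \Oo\left(\frac1n\right),
\end{align*}
where the $\Oo(1/m_i)$ errors are $\Oo(1/n)$ because $|k|\leq n^{3/5}=o(n)$ forces $m_1,m_2\geq n/4$ for large $n$.

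Next we set $x = 2k/n$, so that $m_{1,2}=\tfrac n2(1\pm x)$ with $|x|\leq 2n^{-2/5}\to 0$. The entropy terms collapse to $-\tfrac n2[(1+x)\log(1+x)+(1-x)\log(1-x)]$, and the Taylor expansion $(1+x)\log(1+x)+(1-x)\log(1-x) = x^2 + \Oo(x^4)$ (the odd terms cancel) gives the exponent $-\tfrac n2 x^2 + \Oo(nx^4) = -2k^2/n + \Oo(k^4/n^3)$. For the prefactor, $\tfrac{n}{2\pi m_1m_2} = \tfrac{2}{\pi n}(1-x^2)^{-1}$, and $-\tfrac12\log(1-x^2) = \Oo(x^2)=\Oo(k^2/n^2)$.

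The only point needing care is absorbing this last error into the stated form $\Oo(1/n + k^4/n^3)$. This follows from AM--GM: $k^2/n^2 = \sqrt{(1/n)(k^4/n^3)} \leq \tfrac12(1/n + k^4/n^3)$. Collecting terms proves the first equality. For the second, $|k|\leq n^{3/5}$ gives $k^4/n^3\leq n^{-3/5}\to 0$, so $\exp\{\Oo(1/n + k^4/n^3)\} = 1+o(1)$ uniformly in this range. We expect no real obstacle; the threshold $n^{3/5}$ is chosen exactly so that the quartic correction is uniformly negligible.
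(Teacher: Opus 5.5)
Your proof is correct and follows the same route as the paper: Stirling's formula, then the even Taylor expansion of the entropy term, then the $\mathcal{O}(k^2/n^2)$ prefactor correction. The paper writes the entropy term as $n(\log 2 - h(p))$ with $p=1/2+k/n$, which is your $\frac{n}{2}[(1+x)\log(1+x)+(1-x)\log(1-x)]$; your AM--GM observation $k^2/n^2\leq\frac12(1/n+k^4/n^3)$ makes explicit a step the paper leaves implicit when folding the prefactor error into the stated $\mathcal{O}(1/n+k^4/n^3)$ form.
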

\begin{proof}
This lemma follows from a direct application of Stirling's formula and Taylor expansion, and we provide it for completeness. Write $p=1/2+k/n$. By Stirling's formula,
\begin{align}
\frac{1}{2^n}{n\choose{n/2+k}}=\sqrt{\frac{1}{2\pi np(1-p)}}\exp\left\{-n(\log 2-h(p))\right\}\left(1+\Oo(1/n)\right),\label{binomial-stirling}
\end{align}
where $h$ is the binary entropy function in \eqref{bi-entropy}. 
Since $h$ is symmetric with respect to $p=1/2$, its Taylor expansion at $p=1/2$ contains even terms only: 
\begin{align*}
\log 2 - h(p)=2\left(\frac{k}{n}\right)^2+\frac{4}{3}\left(\frac{k}{n}\right)^4+\Oo(\left(k/n\right)^6). 
\end{align*}
Substituting this into \eqref{binomial-stirling} and noting $\sqrt{1/(2\pi n p(1-p))}=\sqrt{2/(\pi n)}(1+\Oo(k^2/n^2))$ and $k^4/n^3\leq n^{-3/5}=o(1)$ for $|k|\leq n^{3/5}$ completes the proof.
\end{proof}

\begin{proof}[Proof of \Cref{thm:main1}] Write $a\coloneqq\inf_n a_n>a_0$. Recall $Z_n(a)$ and $\mathcal A(\sigma)$ defined in \eqref{Zn}. By the Paley--Zygmund inequality, 
\begin{align}
\mathbb P\{\disc(f_1, \ldots, f_n)\leq a_n\} = \mathbb P\{Z_n(a_n) >0\}\geq\frac{\E[Z_n(a_n)]^2}{\E[Z^2_n(a_n)]}&= \frac{\sum_{\sigma, \sigma'}\mathbb P\{\mathcal A(\sigma)\}\mathbb P\{\mathcal A(\sigma')\}}{\sum_{\sigma, \sigma'}\mathbb P\{\mathcal A(\sigma)\cap \mathcal A(\sigma')\}}\nonumber\\
&= \frac{\sum_\sigma \mathbb P\{\mathcal A(1)\}^2}{\sum_\sigma\mathbb P\{\mathcal A(1)\cap \mathcal A(\sigma)\}}, \label{pzb}
\end{align}
where the last step follows from $\mathbb P\{\mathcal A(\sigma)\}= \mathbb P\{\mathcal A(1)\}$ and that $\sum_{\sigma'}\mathbb P\{\mathcal A(\sigma)\cap \mathcal A(\sigma')\}$ is independent of $\sigma$. Without loss of generality, we assume that $n$ is even and let $\Ss_k=\{\sigma\in\{\pm 1\}^n: \sum_{i}\sigma_i=2k\}$; for odd $n$ one defines $\Ss_k=\{\sigma\in\{\pm 1\}^n: \sum_{i}\sigma_i=2k-1\}$ and the discussion is identical. In what follows, we first present a Laplace-principle heuristic argument for the threshold $a_0$ and then provide a rigorous proof of the full statement. 

\paragraph{Heuristics on $a_0$.}
It is more convenient to work with the reciprocal of the lower bound in \eqref{pzb} and bound it from above: 
\begin{equation}\label{recip}
\begin{aligned}
\frac{\sum_\sigma\mathbb P\{\mathcal A(1)\cap \mathcal A(\sigma)\}}{\sum_\sigma \mathbb P\{\mathcal A(1)\}^2} &= \frac{\sum_{|k|\leq n/2}\sum_{\sigma\in\Ss_k}\mathbb P\{\mathcal A(1)\cap \mathcal A(\sigma)\}}{2^n \mathbb P\{\mathcal A(1)\}^2}\\
&= \sum_{|k|\leq n/2}\frac{1}{2^n}{\binom{n}{\frac{n}{2}+k}}\frac{\mathbb P\{\mathcal A(1)\cap \mathcal A(\sigma^k)\}}{\mathbb P\{\mathcal A(1)\}^2},
\end{aligned}
\end{equation}
where $\sigma^k\in\{\pm 1\}^n$ denotes the element with the first $n/2+k$ coordinates being $1$ and the remaining coordinates being $-1$. 
The probability of the event $\mathcal A(1)\cap \mathcal A(\sigma^k)$ concerns the small ball probability of the extreme value of a correlated planar Brownian motion with correlation $2k/n$, which can be analyzed using \Cref{lm:corB}. Therefore, the summands in \eqref{recip} are governed by a competition between the entropy cost of an overlap and the gain in correlated small-ball probability. Writing
\[
\rho=\frac{2k}{n},
\qquad
I(\rho):=\log 2-h\!\left(\frac{1+\rho}{2}\right),
\]
where $h$ is the binary entropy function in \eqref{bi-entropy}, the exponential contribution associated with overlap $\rho$ can be formally computed by plugging the asymptotics in \eqref{binomial-stirling} (this is formal because the asymptotics hold only for $|\rho|$ away from $1$): 
\[
\exp\left\{
n\left[
\frac{\lambda_1(0)-\lambda_1(\rho)}{2a_n^2}
-I(\rho)
\right]
\right\}.
\]
To ensure \eqref{recip} has order $\Oo(1)$, the exponential rate associated with every fixed overlap $\rho\in(0,1]$ must be nonpositive, suggesting the following variational threshold: 
\[
a_c^2
:=
\sup_{0<\rho\le 1}
\frac{\lambda_1(0)-\lambda_1(\rho)}
     {2I(\rho)}.
\]
The function $\lambda_1(\rho)$ does not admit an explicit formula but can be approximated by the lower bound \eqref{newguy}. Assuming this approximation is sufficiently accurate (it is exact at the two endpoints $\rho=0,1$), the supremum is attained at the endpoint $\rho=1$, yielding $a_c=a_0$. This heuristic computation turns out to identify the correct threshold, as confirmed by the rigorous analysis.

\paragraph{Full proof.}
Fix $a>a_0$ and let $\delta>0$ be some constant chosen later. To upper bound \eqref{recip}, we write it as a sum in three different regimes and control each, respectively:
\begin{align*}
\sum_{|k|\leq n/2}\frac{1}{2^n}{\binom{n}{\frac{n}{2}+k} }\frac{\mathbb P\{\mathcal A(1)\cap \mathcal A(\sigma^k)\}}{\mathbb P\{\mathcal A(1)\}^2} = \underbrace{\sum_{(1/2-\delta)n\leq |k|\leq n/2}\bullet}_{\mathrm{(i)}} \ + \underbrace{\sum_{n^{3/5}\leq |k|\leq (1/2-\delta)n}\bullet}_{\mathrm{(ii)}} \ + \underbrace{\sum_{0\leq |k|\leq n^{3/5}}\bullet}_{\mathrm{(iii)}}. 
\end{align*}
The following upper bound on the binomial coefficients will be frequently used: For every $m\leq n$, 
\begin{align}
\frac{1}{2^n}{n\choose m}\leq\exp\left\{-n(\log 2- h(m/n))\right\}, \label{exact-bi}
\end{align}
where $h$ is the binary entropy function in \eqref{bi-entropy}. 

\noindent{\em{\underline{Bounding Term (i)}.}} A direct computation using \eqref{exact-bi} gives
\begin{equation}\label{bdd-1}
\begin{aligned}
\sum_{(1/2-\delta)n\leq |k|\leq n/2}\frac{1}{2^n}{\binom{n}{\frac{n}{2}+k}}\frac{\mathbb P\{\mathcal A(1)\cap \mathcal A(\sigma^k)\}}{\mathbb P\{\mathcal A(1)\}^2}&\leq\sum_{(1/2-\delta)n\leq |k|\leq n/2}\frac{1}{2^n}{\binom{n}{\frac{n}{2}+k}}\frac{1}{\mathbb P\{\mathcal A(1)\}}\\
&\leq\exp\left\{-n\left(\log 2- h(1-\delta)-\frac{\pi^2}{8a_n^2}\right)+\log(2\delta n)\right\},   
\end{aligned}
\end{equation}
where for the last step we have used the exact asymptotics for $\mathbb P\{\mathcal A(1)\}$ in \eqref{exact}. 
The upper bound in \eqref{bdd-1} is of order $o(1)$ if $r(\delta)\coloneqq\log 2- h(1-\delta)-\frac{\pi^2}{8a_n^2}>0$. Since
\[
r(0)=\log2-\frac{\pi^2}{8a_n^2}
\geq \log2-\frac{\pi^2}{8a^2}>0
\]
uniformly in $n$, continuity of $h$ at $1$ allows us to choose a single $\delta>0$, depending only on $a$, such that $r(\delta)>0$ for every $n$. Hence (i) $=o(1)$. We fix this $\delta$ below. 

\medskip

\noindent{\em{\underline{Bounding Term (ii)}.}} For $k\leq (1/2-\delta)n$, the uniform asymptotics in \Cref{lm:corB} can be applied to obtain
\begin{align*}
\mathbb P\{\mathcal A(1)\cap \mathcal A(\sigma^k)\} &= C_1(2k/n) \exp\left\{-\frac{\lambda_1(2k/n)n}{2a_n^2}\right\} (1 + o(1)),\\ 
\mathbb P\{\mathcal A(1)\}^2&=C_1(0) \exp\left\{-\frac{\lambda_1(0)n}{2a_n^2}\right\} (1 + o(1)),
\end{align*}
where $o(1)$ depends on $\delta$. 
Moreover, by the positivity and continuity of $C_1(\rho)$, $C^* \coloneqq \max_{|\rho|\leq 1-2\delta}C_1(\rho)/C_1(0)<\infty$. Combining the above estimates with \eqref{newguy} and \eqref{exact-bi}, 
\begin{equation}\label{bdd-2}
\begin{aligned}
&\sum_{n^{3/5}\leq |k|\leq (1/2-\delta)n}\frac{1}{2^n}{\binom{n}{\frac{n}{2}+k}}\frac{\mathbb P\{\mathcal A(1)\cap \mathcal A(\sigma^k)\}}{\mathbb P\{\mathcal A(1)\}^2}\\
\leq&\ \sum_{n^{3/5}\leq |k|\leq (1/2-\delta)n}2C^*\exp\left\{-n\left[\frac{1}{2a_n^2}(\lambda_1(2k/n)-\lambda_1(0))+\log 2 - h(1/2+k/n)\right]\right\}\\
\leq&\ \sum_{n^{3/5}\leq |k|\leq (1/2-\delta)n}2C^*\exp\left\{-n\left[\log 2-\frac{\pi^2}{8a_n^2}\left(1-\sqrt{1-\left(\frac{2k}{n}\right)^2}\right) - h(1/2+k/n)\right]\right\}\\
\leq&\ \sum_{n^{3/5}\leq |k|\leq (1/2-\delta)n}2C^*\exp\left\{-n\left(\log 2\cdot\sqrt{1-\left(\frac{2k}{n}\right)^2}- h(1/2+k/n)\right)\right\}\\
\leq&\ \exp\left\{-n\min_{\rho\in [2n^{-2/5}, 1-2\delta]}s(\rho)+\log (C^*n)\right\},\quad\quad s(\rho)\coloneqq \log 2 \cdot \sqrt{1-\rho^2}- h(1/2+\rho/2), 
\end{aligned}
\end{equation}
where the third step is obtained by replacing $a_n$ by the lower bound $a_0 = \pi/\sqrt{8\log 2}$ and the last step uses the symmetry of $s(\rho)$. For $0<\rho<1$,
\[
s'(\rho)
=
\operatorname{arctanh}(\rho)
-
(\log 2)\frac{\rho}{\sqrt{1-\rho^2}}.
\]
Set
\[
F(\rho)
=
\frac{\sqrt{1-\rho^2}}{\rho}
\operatorname{arctanh}(\rho).
\]
A direct differentiation gives
\[
F'(\rho)
=
\frac{\rho-\operatorname{arctanh}(\rho)}
{\rho^2\sqrt{1-\rho^2}}
<0.
\]
Furthermore, $F(0+)=1$ and $F(1-)=0$. Since $\log2\in(0,1)$,
there is a unique $\rho_\star\in(0,1)$ such that
$F(\rho_\star)=\log2$. Hence $s'>0$ on $(0,\rho_\star)$ and
$s'<0$ on $(\rho_\star,1)$. Since $s(0)=s(1)=0$, for any $[\rho_1, \rho_2]\subseteq [0, 1]$, $s(\rho)$ attains its minimum on $[\rho_1, \rho_2]$ at either $\rho_1$ or $\rho_2$. 
Consequently, 
\begin{align*}
\min_{\rho\in [2n^{-2/5}, 1-2\delta]}s(\rho) = \min\{s(2n^{-2/5}), s(1-2\delta)\}. 
\end{align*}
Note that $s(1-2\delta)>s(1)=0$ which is independent of $n$ and $s(2n^{-2/5})\to s(0) = 0$ as $n\to\infty$. For all sufficiently large $n$, the minimum is governed by $s(2n^{-2/5})$. Taylor expanding $s(\rho)$ at $\rho = 0$ and observing $s(0)=s'(0) = 0$ and $s''(0) = 1-\log 2$ yields that  
\begin{align*}
s(2n^{-2/5}) = 2(1-\log 2) n^{-4/5} + o(n^{-4/5}). 
\end{align*}
Substituting this into the bound in \eqref{bdd-2} concludes that (ii) = $o(1)$. 

\medskip

\noindent{\em{\underline{Bounding Term (iii)}.}}
Controlling Term (iii) requires the exact asymptotics in \Cref{lem:binom} for the binomial coefficients near the center using the Stirling formula. For $|k|\le n^{3/5}$, $2k/n\leq 2n^{-2/5}\to 0$. Since $\lambda_1$ is even and analytic in a neighborhood of 0, there is a constant $C<\infty$ such that, uniformly for $|k|\le n^{3/5}$,
\[
\lambda_1(2k/n)-\lambda_1(0) =
\frac{\lambda_1''(0)}{2}\left(\frac{2k}{n}\right)^2 + \mathcal O((2k/n)^4).
\]
Moreover, continuity of $C_1$ at $0$ gives
\[
\sup_{|k|\le n^{3/5}}\left|
\frac{C_1(2k/n)}{C_1(0)}-1
\right|=o(1).
\]
Then the regularity properties in \Cref{lm:corB}~(i) give the following exact small-ball asymptotics uniformly on every fixed compact subset of $(-1,1)$. Since $2k/n\le 2n^{-2/5}$ and $a_n/\sqrt n\to 0$, there exists a deterministic sequence \(\varepsilon_n\to0\) such that, uniformly for
\(|k|\le n^{3/5}\),
\[
\mathbb P\{\mathcal A(1)\cap \mathcal A(\sigma^k)\} =
C_1(2k/n)\exp\left\{-\frac{\lambda_1(2k/n)n}{2a_n^2}
\right\} (1+\mathcal O(\varepsilon_n)),
\]
and
\[
\mathbb P\{\mathcal A(1)\}^2 = C_1(0) \exp\left\{ -\frac{\lambda_1(0)n}{2a_n^2} \right\} (1+\mathcal O(\varepsilon_n)).
\]

Consequently, combining these estimates with \Cref{lem:binom} yields that, uniformly for $|k|\le n^{3/5}$,
\[
\frac{1}{2^n}
\binom{n}{\frac{n}{2}+k}
\frac{
\mathbb P\{\mathcal A(1)\cap \mathcal A(\sigma^k)\}
}{\mathbb P\{\mathcal A(1)\}^2} =
(1+o(1))
\sqrt{\frac{2}{\pi n}}
\exp\left\{
-\frac{2k^2}{n}
\left(
1-\frac{\beta}{a_n^2}
\right)
+
\mathcal O\left(
\frac1n+\frac{k^4}{n^3}
\right)
\right\},
\]
where the $o(1)$ is uniform over $|k|\le n^{3/5}$. Since $\sup_{|k|\le n^{3/5}}\left(\frac1n+\frac{k^4}{n^3}\right)=o(1)$, the final exponential error can be absorbed into this uniform $1+o(1)$ factor.
Consequently, denoting 
\begin{align}
\beta \coloneqq -\frac{\lambda''_1(0)}{2}>0\label{beta}
\end{align}
and noting $\inf_{n}a_n>a_0>\sqrt{\beta}$, 
\begin{equation}\label{bdd-3}
\begin{aligned}
\sum_{|k|\leq n^{3/5}}\frac{1}{2^n}{n\choose \frac{n}{2}+k}\frac{\mathbb P\{\mathcal A(1)\cap \mathcal A(\sigma^k)\}}{\mathbb P\{\mathcal A(1)\}^2}&=\sum_{|k|\leq n^{3/5}}(1+o(1))\sqrt{\frac{2}{\pi n}}\exp\left\{-\frac{2k^2}{n}\left(1-\frac{\beta}{a_n^2}\right)+\mathcal O\left(\frac{1}{n}+\frac{k^4}{n^3}\right)\right\}\\
&=\sum_{|k|\leq n^{3/5}}(1+o(1))\sqrt{\frac{2}{\pi n}}\exp\left\{-\frac{2k^2}{n}\left(1-\frac{\beta}{a_n^2}\right)\right\}\\
&=\frac{1+o(1)}{\sqrt{1-\beta/a_n^2}},  
\end{aligned}
\end{equation}
where the last step follows from the Riemann-sum approximation, with an error that is $o(1)$ uniformly in $a_n$ under the assumption $\inf_n a_n>a_0>\sqrt{\beta}$.  Combining the estimates in \eqref{bdd-1}, \eqref{bdd-2}, and \eqref{bdd-3} yields that, 
\begin{align}
\frac{\E[Z^2_n(a_n)]}{\E[Z_n(a_n)]^2} &= \frac{1+o(1)}{\sqrt{1-\beta/a_n^2}}\implies \mathbb P\{\disc(f_1, \ldots, f_n)\leq a_n\}\geq \sqrt{1-\beta/a_n^2} - o(1), \label{hr}
\end{align}
where $o(1)$ depends on $a$ and $n$. 
The proof is complete. 
\end{proof}

\subsection{Proof of \Cref{lm:corB}}\label{sec:corB}
\paragraph{Proof of statement (i).} For fixed $\rho$, the asymptotics in \eqref{goal} can be obtained using classical heat kernel estimates \citep{davies1989heat}. Since we need an asymptotic formula uniform in $\rho$, we need to track the errors in various approximations leading to \eqref{goal}. 

Denote by $W(t)$ the standard planar Brownian motion started from the origin and $T = \e^{-2}$. Since $B(t)$ has the same law as $T(\rho)W(t)$, 
\begin{align*}
\Pp\left\{B(t)\in [-\e, \e]^2, \forall t\in [0, 1]\right\}&= \mathbb{P}\left\{W(t)\in \e\mathcal R(\rho), \forall t\in [0, 1]\right\}=\mathbb{P}\left\{W(t)\in \mathcal R(\rho), \forall t\in [0, T]\right\},
\end{align*}
where the last step follows from Brownian scaling: $\varepsilon^{-1}W(\varepsilon^2 t)$ is again a standard planar Brownian motion.
The last term is the survival probability of a standard planar Brownian motion killed on $\partial\mathcal R(\rho)$ up to time $T$. If we denote the transition kernel of this Brownian motion by $p_t(x, y)$, then  
\begin{align*}
\mathbb P\left\{W(t)\in \mathcal R(\rho), \forall t\in [0, T]\right\} = \int_{\mathcal R(\rho)}p_T(x)\mathrm d x, \quad\quad p_t(x)\coloneqq p_t(0, x).  
\end{align*}

To understand this quantity, we analyze the spectrum of the kernel $p_t(x, y)$. For $|\rho|\leq 1-\delta$, $\mathcal R(\rho)$ remains uniformly bounded, i.e., 
\begin{equation}\label{allhere}
\begin{aligned}
\max_{|\rho|\leq 1-\delta}|\mathcal R(\rho)|
&= \max_{|\rho|\leq 1-\delta}\frac{4}{\sqrt{1-\rho^2}}
= \frac{4}{\sqrt{\delta(2-\delta)}}
\leq \frac{4}{\sqrt{\delta}},\\
\max_{|\rho|\leq 1-\delta}\mathrm{diam}(\mathcal R(\rho))
&= \max_{|\rho|\leq 1-\delta}2\sqrt{\frac{2}{1-|\rho|}}
= \sqrt{\frac{8}{\delta}}.
\end{aligned}
\end{equation}
Let $0<\lambda_1(\rho) < \lambda_2(\rho)\dots$ be the Dirichlet eigenvalues of the negative Laplacian $(-\Delta)$ on $\mathcal R(\rho)$, and let $\{\phi_k(\cdot; \rho)\}_{k=1}^\infty$ be the corresponding $L^2$-normalized eigenfunctions.
For $|\rho|\leq 1-\delta$, the domain $\mathcal R(\rho)$ is a bounded convex polygon with uniformly controlled angles (where the uniformity depends only on $\delta$). Standard elliptic regularity gives $\phi_i\in H^2(\mathcal R(\rho)^\circ)$, hence a continuous representative on $\mathcal R(\rho)$; each $\phi_i$ is smooth in the interior.
The heat kernel $p_t(x, y)$ admits the following spectral expansion \citep{davies1989heat}: 
\begin{align}
p_t(x, y) = \sum_{i=1}^\infty e^{-\lambda_i(\rho) t/2}\phi_i(x; \rho)\phi_i(y; \rho), \label{0011}
\end{align}
where the convergence is in $L^2(\mathcal R(\rho)\times \mathcal R(\rho))$ for every $t>0$. To further upgrade convergence to the pointwise level (uniformly in both spatial variables and correlations), we need quantitative spectral estimates. First, we lower bound $\lambda_i(\rho)$ uniformly for all $|\rho|\leq 1-\delta$ \citep{li1983schrodinger}: 
\begin{align}
\lambda_i(\rho)\geq\frac{2\pi i}{|\mathcal R(\rho)|}\stackrel{\eqref{allhere}}{>}\sqrt{\delta}i, \quad\quad i=1, 2, \ldots \label{bdd1}
\end{align}
Meanwhile, the eigenfunctions have at most algebraic growth in the corresponding eigenvalues. Since $\phi_i(\cdot; \rho)$ is an eigenfunction of the kernel $p_t(x, y)$ with unit $L^2$ norm associated with eigenvalue $e^{-\lambda_i(\rho)t/2}$, by definition, for all $t>0$, 
\begin{equation}\label{2toinfty}
\begin{aligned}
|\phi_i(x; \rho)| &= e^{\lambda_i(\rho)t/2}\left|\int_{\mathcal R(\rho)}\phi_i(y; \rho)p_t(x, y) \mathrm d y\right|\\
&\leq e^{\lambda_i(\rho)t/2}\|p_t(x, \cdot)\|_{L^2(\mathcal R(\rho))}\leq e^{\lambda_i(\rho)t/2}\sqrt{p_{2t}(x, x)}\leq\frac{e^{\lambda_i(\rho)t/2}}{\sqrt{4\pi t}}, 
\end{aligned}
\end{equation}
where the penultimate step used the semigroup property and the last step follows by dropping the killing effect. 
Choosing $t = 1/\lambda_i(\rho)$ followed by taking supremum over $x$ yields 
\begin{align}
\|\phi_i(\cdot;\rho)\|_{L^\infty(\mathcal R(\rho))} \leq
\frac{e^{1/2}}{\sqrt{4\pi}}\sqrt{\lambda_i(\rho)} \leq
\sqrt{\lambda_i(\rho)}.\label{bdd2}
\end{align} 
Combining \eqref{bdd1} and \eqref{bdd2}, for all $t>1$, $x,y\in\mathcal R(\rho)$, and $|\rho|\leq1-\delta$, 
\begin{align*}
e^{-\lambda_i(\rho)t/2}
|\phi_i(x;\rho)\phi_i(y;\rho)|\leq
\lambda_i(\rho)e^{-\lambda_i(\rho)t/2}\leq
2 e^{-\lambda_i(\rho)t/4}\leq
2 e^{-\sqrt{\delta}\,it/4},
\end{align*}
where the last expression is summable in $i$. Hence, by the Weierstrass $M$-test, the series in \eqref{0011} converges uniformly. As a result, we can set $x=0$ and integrate $y$ over $\mathcal R(\rho)$ in \eqref{0011} to obtain the following series representation of the small ball probability: 
\begin{align}
\int_{\mathcal R(\rho)}p_t(x)\mathrm d x = C_1(\rho)e^{-\lambda_1(\rho)t/2} + \sum_{i=2}^\infty e^{-\lambda_i(\rho) t/2}\phi_i(0; \rho)\int_{\mathcal R(\rho)}\phi_i(x; \rho) \mathrm d x, \label{heat-series}
\end{align}
where $C_1(\rho)$ is defined in \eqref{myc1} and the convergence is uniform for $|\rho|\leq 1-\delta$. 

We next obtain a uniform remainder estimate. Under the linear change of variables from $\mathcal R(\rho)$ to the fixed square $\mathcal R(0)$, the operators form a uniformly elliptic family for $|\rho|\leq1-\delta$. The min--max principle, applied to fixed finite-dimensional subspaces of $H_0^1(\mathcal R(0))$, therefore gives constants $M_\delta<\infty$ and $M_{\delta,r}<\infty$ such that
\[
\sup_{|\rho|\leq1-\delta}\lambda_1(\rho)\leq M_\delta,
\qquad
\sup_{|\rho|\leq1-\delta}\max_{1\leq i\leq r}\lambda_i(\rho)\leq M_{\delta,r}
\]
for every fixed $r$. Set $c_\delta\coloneqq\sqrt\delta$ and choose an integer $r_\delta$ so large that $c_\delta r_\delta\geq2M_\delta$. Factoring out the principal exponential in \eqref{heat-series} and using \eqref{bdd2},
\begin{align*}
& e^{\lambda_1(\rho)t/2}
\left|\sum_{i=2}^\infty e^{-\lambda_i(\rho)t/2}\phi_i(0;\rho)
\int_{\mathcal R(\rho)}\phi_i(x;\rho)\,\mathrm dx\right|\\
&\qquad\leq 2\delta^{-1/4}
\sum_{i=2}^\infty\sqrt{\lambda_i(\rho)}
 e^{-(\lambda_i(\rho)-\lambda_1(\rho))t/2}.
\end{align*}
For $2\leq i<r_\delta$, the fundamental-gap estimate of \citet[Corollary 1.4]{andrews2011proof}
\begin{align}
g_\delta\coloneqq\inf_{|\rho|\leq1-\delta}
(\lambda_2(\rho)-\lambda_1(\rho))
\geq\inf_{|\rho|\leq1-\delta}
\frac{3\pi^2}{\operatorname{diam}(\mathcal R(\rho))^2}
\geq\frac{3\pi^2}{8}\delta>0
\label{fg}
\end{align}
and the bound $\lambda_i(\rho)\leq M_{\delta,r_\delta}$ show that the corresponding finite sum is at most
\[
C_{\delta,0}e^{-g_\delta t/2},
\qquad
C_{\delta,0}\coloneqq(r_\delta-2)\sqrt{M_{\delta,r_\delta}}.
\] 

For $i\geq r_\delta$, \eqref{bdd1} and the definition of $r_\delta$ imply
$\lambda_i(\rho)-\lambda_1(\rho)\geq\lambda_i(\rho)/2$. For all sufficiently large $t=t(\delta)$, the function $x\mapsto\sqrt{x}e^{-xt/4}$ is decreasing on $[c_\delta r_\delta,\infty)$, and hence
\begin{align*}
\sum_{i\geq r_\delta}\sqrt{\lambda_i(\rho)}
 e^{-(\lambda_i(\rho)-\lambda_1(\rho))t/2}
\leq\sum_{i\geq r_\delta}\sqrt{c_\delta i}
 e^{-c_\delta i t/4}\leq C_{\delta, 1}\delta e^{-c_\delta r_\delta t/8}
\end{align*}
for some $C_{\delta, 1}>0$. Consequently, for some $C_\delta, \gamma_\delta>0$, the remainder in \eqref{heat-series} is bounded uniformly by
\[
C_\delta e^{-\lambda_1(\rho)t/2}e^{-\gamma_\delta t}.
\]
Finally, choosing the principal eigenfunction positive, $C_1(\rho)>0$ follows by the strong maximum principle \cite[Section 6.4.2]{evans2010partial}. Furthermore, statement~(iii) (whose proof is independent of statement~(i)) implies
$\inf_{|\rho|\leq1-\delta}C_1(\rho)>0$. Dividing the preceding remainder bound by the principal term proves \eqref{goal} uniformly for all large $t$ (i.e., $t = T\to\infty$ as $\e\to 0$).

\paragraph{Proof of statement (ii).} We first establish the regularity estimates and then prove the lower bound \eqref{newguy}. Fix the sign of $\phi_1(\cdot; \rho)>0$. Since the domain depends smoothly on $\rho$, we take the transform $T(\rho): \mathcal R(\rho)\to\mathcal R(0): x \mapsto T(\rho)x$ to remove the domain's dependence on $\rho$, which is similar to \cite[Lemma 3.1]{el2007domain}. The transformed negative Laplacian is given by
\begin{align}
&\mathcal L(\rho)\coloneqq -\nabla\cdot(\Sigma(\rho)\nabla), &\Sigma(\rho)\coloneqq T(\rho)T(\rho)^\top=\begin{pmatrix}1&\rho\\ \rho&1\end{pmatrix}.
\end{align}
Its normalized principal eigenfunction is
\begin{align}
\psi_1(y;\rho)\coloneqq |\det T(\rho)|^{-1/2}\phi_1(T(\rho)^{-1}y;\rho),\qquad y\in\mathcal R(0).\label{psi-phi}
\end{align}
Thus $\mathcal L(\rho)\psi_1(\cdot; \rho)=\lambda_1(\rho)\psi_1(\cdot; \rho)$ and $\|\psi_1(\cdot;\rho)\|_{L^2(\mathcal R(0))}=1$. Since $\lambda_1(\rho)$ is simple (e.g., by \eqref{fg}) and has the variational formula:
\begin{align}
\lambda_1(\rho) = \inf_{u\in H^1_0(\mathcal R(0)): \|u\|_{L^2(\mathcal R(0))}=1}E(u; \rho),\label{var-form}
\end{align}
where 
\begin{align}\label{dirichlet}
E(u; \rho)\coloneqq\langle u, \mathcal L(\rho)u\rangle_{L^2(\mathcal R(0))}=\int_{\mathcal R(0)}(\partial_x u)^2+(\partial_y u)^2 + 2\rho\partial_x u\partial_y u\ \mathrm{d} x\mathrm{d} y. 
\end{align}
Classical perturbation results establish the real-analyticity of $\lambda_1(\rho)$ and of $\rho\mapsto\psi_1(\cdot;\rho)$ in $L^2(\mathcal R(0))$ \cite[Chapter 7]{kato1966perturbation}. Since $E(u(x, y); \rho) = E(u(x, -y); -\rho)$, $\lambda_1(\rho)$ is even by the formula \eqref{var-form} and hence $\lambda_1'(0)=0$. On the other hand, since $E(u; \rho)$ is linear in $\rho$ for every fixed $u$, $\lambda_1(\rho)$ is concave on $(-1, 1)$. Consequently, $\lambda_1'(\rho)\leq0$ for $\rho\geq0$ and $\lambda_1''(\rho)\leq0$ wherever the second derivative is evaluated. The inequality for $\lambda_1'(\rho)$ is strict for every $\rho>0$: analyticity together with $\lambda_1''(0)<0$ gives $\lambda_1'(\rho)<0$ for all sufficiently small $\rho>0$, and concavity makes $\lambda_1'$ nonincreasing on the remainder of $(0,1)$. The strict inequality $\lambda_1''(0)<0$ follows from the second-order perturbation formula for the simple principal eigenvalue: 
\begin{align*}
\lambda_1''(0)
&=\langle\psi_1(\cdot;0),\mathcal L''(0)\psi_1(\cdot;0)\rangle_{L^2(\mathcal R(0))}+2\sum_{j\geq2}
\frac{\langle\psi_j(\cdot;0),\mathcal L'(0)\psi_1(\cdot;0)\rangle^2_{L^2(\mathcal R(0))}}{\lambda_1(0)-\lambda_j(0)}\\
& = 2\sum_{j\geq2}\frac{\langle\psi_j(\cdot;0),\mathcal L'(0)\psi_1(\cdot;0)\rangle^2_{L^2(\mathcal R(0))}}{\lambda_1(0)-\lambda_j(0)}<0&\text{(since $\mathcal L''(0)=0$)}. 
\end{align*}

Moreover, since $\mathcal R(0)$ is a square in $\R^2$, the Dirichlet eigenvalues and eigenfunctions are explicitly known; see \cite[Section 3.1]{grebenkov2013geometrical}. The eigenvalues can be written in a double-index form as $\pi^2(i^2+j^2)/4$ with associated eigenfunctions $p_i(x)p_j(y)$, where $p_i(x)=\cos(i\pi x/2)$ if $i$ is odd and $p_i(x)=\sin(i\pi x/2)$ if $i$ is even. In this basis, the perturbation series admits an explicit form. Since $\mathcal L'(0)=-2\partial_x\partial_y$ and $\psi_1(x,y;0)=p_1(x)p_1(y)$,
\begin{align*}
\mathcal L'(0)\psi_1(\cdot;0) = -\frac{\pi^2}{2}\sin(\pi x/2)\sin(\pi y/2).
\end{align*}
By parity, $\langle p_i, \sin(\pi x/2)\rangle_{L^2(-1,1)}=0$ for odd $i$, and a direct computation gives, for $i=2k$,
\begin{align*}
c_{2k} \coloneqq \int_{-1}^1 \sin(k\pi x)\sin\Big(\frac{\pi x}{2}\Big)\mathrm dx
= (-1)^{k-1}\frac{8k}{\pi(4k^2-1)}.
\end{align*}
Hence only the modes $(i, j)=(2k, 2\ell)$ with $k, \ell\geq1$ couple to $\psi_1(\cdot; 0)$, with eigenvalues $\pi^2(k^2+\ell^2)$, and the perturbation formula becomes
\begin{align}\label{eq:lambda-pp-series}
-\lambda_1''(0) = \frac{\pi^2}{2}\sum_{k, \ell\geq1}\frac{c_{2k}^2c_{2\ell}^2}{k^2+\ell^2-1/2} = \frac{2048}{\pi^2}\sum_{k, \ell\geq1}\frac{k^2\ell^2}{(4k^2-1)^2(4\ell^2-1)^2(k^2+\ell^2-1/2)}.
\end{align}
The series converges absolutely, i.e., its terms are dominated by $\frac{2}{81}k^{-2}\ell^{-2}(k^2+\ell^2)^{-1}$ because $4k^2-1\geq3 k^2$ and $k^2+\ell^2-1/2\geq (k^2+\ell^2)/2$. Truncating \eqref{eq:lambda-pp-series} at $k, \ell\leq N$, 
\begin{align*}
&\sum_{\max(k,\ell)>N}
\frac{1}{k^2\ell^2(k^2+\ell^2)} 
 \leq
2\sum_{k>N}\sum_{\ell\geq1}
\frac{1}{k^4\ell^2}
\leq
2\left(\frac{1}{3N^3}\right)
\left(\frac{\pi^2}{6}\right).
\end{align*}
Consequently,
\begin{align*}
0
&\leq
\frac{2048}{\pi^2}
\sum_{\max(k,\ell)>N}
\frac{k^2\ell^2}
{(4k^2-1)^2(4\ell^2-1)^2(k^2+\ell^2-1/2)} 
\leq\frac{2048}{\pi^2}\cdot
\frac{2}{81}\cdot
\frac{\pi^2}{9N^3}
\leq
\frac{6}{N^3}.
\end{align*}
Evaluating the finite sum at $N=50$ in exact rational arithmetic, using $333/106<\pi<355/113$, and combining it with the tail bound $6/N^3$ yields the rigorous enclosure $-\lambda_1''(0)\in[1.965,1.966]$. 

We next use the variational formula to obtain the lower bound on $\lambda_1(\rho)$ in \eqref{newguy}. 
By symmetry, it suffices to consider $\rho\in [0, 1)$. 
Let $\theta\in [0, \pi/4)$ such that $\rho = \sin(2\theta)$, and define the unit directions $b_1 = (\cos(\theta), \sin(\theta))^\top$ and $b_2 = (\sin(\theta), \cos(\theta))^\top$. 
For any $u\in H^1_0(\mathcal R(0))$ with $\|u\|_{L^2(\mathcal R(0))}=1$, its associated Dirichlet form \eqref{dirichlet} can be represented as a sum of directional Dirichlet forms along $b_1$ and $b_2$:  
\begin{align}
E(u; \rho)=\int_{\mathcal R(0)}(b_1\cdot\nabla u)^2 + (b_2\cdot\nabla u)^2\ \mathrm{d} x\mathrm{d} y. \label{b1b2}
\end{align}
By the Fubini--Tonelli theorem, we can evaluate each integral under a rotation of the basis. In particular, for the first integral, we evaluate it under the basis $(b_1, b_1^\perp)^\top$, where $b_1^\perp$ is a unit vector perpendicular to $b_1$. For each fixed coordinate $t_2$ in $b_1^\perp$, the integral along $b_1$ is $\partial_{t_1}u(t_1b_1+t_2b_1^\perp)$, and the support of $t_1$ is an interval bounded by $2/\cos(\theta)$. An application of the 1D-Poincaré inequality yields
\begin{align}
\int_{\mathcal R(0)}(b_1\cdot\nabla u)^2\ \mathrm{d} x\mathrm{d} y &= \int_{t_1b_1+t_2b_1^\perp\in\mathcal R(0)}(\partial_{t_1} u(t_1b_1+t_2b_1^\perp))^2\ \mathrm{d} t_1\mathrm{d} t_2\nonumber\\
&\geq\frac{\pi^2\cos(\theta)^2}{4}\int_{t_1b_1+t_2b_1^\perp\in\mathcal R(0)}u(t_1b_1+t_2b_1^\perp)^2\ \mathrm{d} t_1\mathrm{d} t_2\nonumber\\
&= \frac{\pi^2\cos(\theta)^2}{4},  \label{lbb1}
\end{align}
where the last identity uses that $\|u\|_{L^2(\mathcal R(0))}=1$. Similarly, 
\begin{align}
\int_{\mathcal R(0)}(b_2\cdot\nabla u)^2\ \mathrm{d} x\mathrm{d} y \geq\frac{\pi^2\cos(\theta)^2}{4}.  \label{lbb2}
\end{align}
Substituting \eqref{lbb1}, \eqref{lbb2} into \eqref{b1b2} and combining with \eqref{var-form} yields the desired lower bound on $\lambda_1(\rho)$.

\paragraph{Proof of statement (iii).} Establishing the continuity of $C_1(\rho)$ requires slightly more work due to the pointwise evaluation. Using the definition of $C_1(\rho)$ and \eqref{psi-phi}, we can rewrite $C_1(\rho)$ as 
\begin{align}
C_1(\rho)=\psi_1(0;\rho)\int_{\mathcal R(0)}\psi_1(y;\rho)\,\mathrm dy.
\end{align}
To establish the continuity of $C_1(\rho)$, it remains to show that $\psi_1(0;\rho)$ is continuous in $(-1, 1)$. 

Fix $\delta>0$. Let $\Omega_1\Subset\Omega_2\Subset\mathcal R(0)$ be open balls centered at the origin with radii $0.2$ and $0.5$, respectively. For any $\rho, \rho'\in [-1+\delta, 1-\delta]$ and $f\coloneqq f(\cdot; \rho, \rho') \coloneqq \psi_1(\cdot; \rho)-\psi_1(\cdot; \rho')$,
\begin{align*}
\mathcal L(\rho)f(\cdot; \rho, \rho') &= \mathcal L(\rho)\psi_1(\cdot; \rho)-\mathcal L(\rho')\psi_1(\cdot; \rho') + (\mathcal L(\rho')-\mathcal L(\rho))\psi_1(\cdot; \rho')\\
& = \underbrace{\lambda_1(\rho)\psi_1(\cdot; \rho)-\lambda_1(\rho')\psi_1(\cdot; \rho')}_{\eqqcolon f_1}+ \underbrace{(\mathcal L(\rho')-\mathcal L(\rho))\psi_1(\cdot; \rho')}_{\eqqcolon f_2}.
\end{align*}
For $|\rho|\leq 1-\delta$,  $\mathcal L(\rho)$ is uniformly elliptic. By the interior $H^2$ regularity estimate on $\Omega_2\Subset\mathcal R(0)$ \cite[Section 6.3.1, Theorem 1]{evans2010partial} and that $\psi_1(\cdot; \rho)$ is $L^2(\mathcal R(0))$-normalized, 
\begin{equation}\label{psi_1_h2}
\begin{aligned}
\sup_{|\eta|\leq 1-\delta}\|\psi_1(\cdot;\eta)\|_{H^2(\Omega_2)}
&\lesssim
\sup_{|\eta|\leq 1-\delta}
\left(
\|\mathcal L(\eta)\psi_1(\cdot;\eta)\|_{L^2(\mathcal R(0))}
+\|\psi_1(\cdot;\eta)\|_{L^2(\mathcal R(0))}
\right)\\
&\leq
\max_{|\eta|\leq 1-\delta}\lambda_1(\eta)+1<\infty.
\end{aligned}
\end{equation}
Applying the same estimate on $\Omega_1\Subset\Omega_2$ to $f$ yields
\begin{align*}
\|f\|_{H^2(\Omega_1)}
&\lesssim \|f_1\|_{L^2(\Omega_2)}+\|f_2\|_{L^2(\Omega_2)}+\|f\|_{L^2(\Omega_2)}\\
&\lesssim
\left(\max_{|\eta|\leq 1-\delta}\lambda_1(\eta)+1\right)\|f\|_{L^2(\mathcal R(0))}
+|\lambda_1(\rho)-\lambda_1(\rho')|+|\rho-\rho'|.
\end{align*}
Here, we used
\[
\|f_1\|_{L^2(\Omega_2)}
\leq
\left(\max_{|\eta|\leq 1-\delta}\lambda_1(\eta)\right)\|f\|_{L^2(\Omega_2)}
+|\lambda_1(\rho)-\lambda_1(\rho')|,
\]
and
\begin{align*}
\|f_2\|_{L^2(\Omega_2)}
&=
\left\|\operatorname{tr}\!\left((\Sigma(\rho)-\Sigma(\rho'))\nabla^2\psi_1(\cdot;\rho')\right)\right\|_{L^2(\Omega_2)}\\
&\leq
\max_{|\eta|\leq 1-\delta}\left\|\frac{\mathrm d\Sigma}{\mathrm d\eta}\right\|_F
|\rho-\rho'|
\sup_{|\eta|\leq 1-\delta}\|\psi_1(\cdot;\eta)\|_{H^2(\Omega_2)}
\lesssim |\rho-\rho'|.
\end{align*}
and for the third inequality we applied \eqref{psi_1_h2}. This shows that $f|_{\Omega_1}\in H^2(\Omega_1)$ for every $\rho, \rho'\in [-1+\delta, 1-\delta]$. Taking $\rho'\to\rho$ and recalling that $\rho\mapsto\psi_1(\cdot; \rho)$ is analytic in $L^2(\mathcal R(0))$ shows that $\rho\mapsto\psi_1(\cdot; \rho)|_{\Omega_1}$ is continuous in $H^2(\Omega_1)$. To finish the proof, we pass this continuity to the pointwise estimate via the Sobolev inequalities \cite[Section 5.6.3, Theorem 6]{evans2010partial}: 
\begin{align*}
|\psi_1(0, \rho')-\psi_1(0, \rho)|\leq\|f(\cdot; \rho, \rho')\|_{C(\overline{\Omega}_1)}
\lesssim\|f(\cdot; \rho, \rho')\|_{H^2(\Omega_1)} \xrightarrow{\rho'\to\rho} 0.
\end{align*}

%% file: solution-space.tex
\section{Geometry of solution space at $H=1/2$}\label{sec:ogp}

\subsection{Proof of \Cref{thm:count}}
Let $\bar{Y}_n(a_n)\coloneqq Z_n(a_n)-Y_n(a_n)$ count the number of elements counted by $Z_n(a_n)$ (see \eqref{Zn} for a definition) that are not local minima:   
\begin{align}
&\bar{Y}_n(a_n)\coloneqq\sum_{\sigma\in\{\pm 1\}^n}\mathbf 1\{\mathcal E(\sigma)\}, &\mathcal E(\sigma)\coloneqq \left\{\text{$L(\sigma^{(i)})<L(\sigma)\leq a_n$ for some $i$}\right\}.
\end{align}
For $a_0<a_n\ll n^{1/4}/\sqrt{\log n}$, the result in \eqref{expec} would follow if we show $\E[\bar{Y}_n(a_n)]/\E[Z_n(a_n)] = o(1)$. 
To this end, we apply a first-moment computation. By a union bound, we have 
\begin{align}
\E[\bar{Y}_n(a_n)]=\sum_{\sigma\in\{\pm 1\}^n}\Pp\{\mathcal E(\sigma)\}\leq\sum_{\sigma\in\{\pm 1\}^n}\sum_{i=1}^n\Pp\{\mathcal A(\sigma)\cap \mathcal A(\sigma^{(i)})\} = 2^n\cdot n\cdot\Pp\{\mathcal A(1)\cap \mathcal A(1^{(i)})\}. \label{1stbdd}
\end{align}
As we have seen in the proof of \Cref{thm:main1}, $\Pp\{\mathcal A(\sigma)\cap \mathcal A(\sigma^{(i)})\}$ corresponds to the survival probability of a correlated planar Brownian motion started from the origin killed on the boundary of $[-a_n/\sqrt{n}, a_n/\sqrt{n}]^2$ before time $1$. However, since $1$ and $1^{(i)}$ have correlation $1-2/n$, which converges to one as $n\to\infty$, the results of \Cref{lm:corB} do not apply directly. Nevertheless, since we only need an upper bound here, we can still adapt part of the machinery developed there to obtain the following useful estimate.
\begin{lemma}
Let $B(t) = (B_1(t), B_2(t))^\top$ be a planar Brownian motion started from the origin with constant correlation $\rho\in (-1, 1)$. Let $\lambda_1(\rho)>0$ be the principal Dirichlet eigenvalue of the negative Laplacian $(-\Delta)$ on $\mathcal R(\rho)$, where $\mathcal R(\rho)$ is defined in \eqref{Rrho}. For $0<\e<1$,   
\begin{align}
\Pp\left\{B(t)\in [-\e, \e]^2, \forall t\in [0, 1]\right\} \leq \frac{1}{\sqrt{\pi}(1-\rho^2)^{1/4}}\exp\left\{-\frac{\lambda_1(\rho)}{2}\left(\frac{1}{\e^2}-1\right)\right\}. \label{uppbdd}
\end{align}
\begin{proof}
Let $T = \e^{-2}$. 
Denote by $W(t)$ the standard planar Brownian motion started from the origin and killed on $\partial\mathcal R(\rho)$ (see \eqref{Rrho} for the definition of $\mathcal R(\rho)$), and $p_t(x, y)$ the corresponding transition kernel. Let $P_t: f\mapsto \int_{\mathcal R(\rho)}f(y)p_t(x, y) \mathrm{d} y$ be the semigroup of $W_t$. Following a similar argument as in the proof of \Cref{lm:corB} and the semigroup property, 
\begin{align*}
\Pp\left\{B(t)\in [-\e, \e]^2, \forall t\in [0, 1]\right\}=(P_T1)(0) &= (P_1P_{T-1}1)(0)\\
&= \int_{\mathcal R(\rho)}p_1(0, y) (P_{T-1}1)(y) \mathrm{d} y\\
&\leq \|p_1(0, \cdot)\|_{L^2(\mathcal R(\rho))}\|P_{T-1}1\|_{L^2(\mathcal R(\rho))}\\
&\leq \|p_1(0, \cdot)\|_{L^2(\mathcal R(\rho))}\|P_{T-1}\|_{L^2(\mathcal R(\rho))\to L^2(\mathcal R(\rho))}|\mathcal R(\rho)|^{1/2}\\
&\leq \frac{1}{\sqrt{4\pi}}e^{-\lambda_1(\rho)(T-1)/2}|\mathcal R(\rho)|^{1/2}, 
\end{align*}
where the last step follows from the last inequality in \eqref{2toinfty} and $\|P_{t}\|_{L^2(\mathcal R(\rho))} = e^{-\lambda_1(\rho)t/2}$ for $t>0$. 
The proof is finished by noting $|\mathcal R(\rho)|=4/\sqrt{1-\rho^2}$. 
\end{proof}
\end{lemma}
Applying \eqref{uppbdd} to $\Pp\{\mathcal A(\sigma)\cap \mathcal A(\sigma^{(i)})\}$ and replacing $\lambda_1(\rho)$ by its lower bound in \eqref{newguy},  
\begin{align} 
\Pp\{\mathcal A(\sigma)\cap \mathcal A(\sigma^{(i)})\}\leq\frac{n^{1/4}}{\sqrt{\pi}2^{1/4}}\exp\left\{-\frac{\pi^2n}{8a_n^2}\left(1+\sqrt{1-(1-2/n)^2}\right)\left(1-\frac{a_n^2}{n}\right)\right\}. 
\end{align} 
Substituting this into \eqref{1stbdd} and applying the Taylor expansion yields
\begin{align*}
\E[\bar{Y}_n(a_n)]&\leq\exp\left\{-\frac{\pi^2n}{8a_n^2}\left(1+\sqrt{1-(1-2/n)^2}\right)\left(1-\frac{a_n^2}{n}\right)+ n\log 2+\frac{5}{4}\log n\right\}\\
& = \exp\left\{-\frac{\pi^2n}{8a_n^2}\left(1+\frac{2}{\sqrt{n}}-\frac{a_n^2}{n}+o\left(\frac{1}{\sqrt{n}}\right)\right)+ n\log 2+\frac{5}{4}\log n\right\}\\
&\leq\exp\left\{-\frac{\pi^2n}{8a_n^2}\left(1+\frac{1}{\sqrt{n}}\right)+ n\log 2+\frac{5}{4}\log n\right\},  
\end{align*}
where the last step uses $a_n< n^{1/4}/\sqrt{4\log n}$ and holds for all large $n$.  
Meanwhile, recall that $Z_n(a)$ has its expectation equal to 
\begin{align*}
\E[Z_n(a_n)] = \left(\frac{4}{\pi}+o(1)\right)\exp\left\{-\frac{\pi^2n}{8a_n^2}+ n\log 2\right\}. 
\end{align*}
Taking the ratio between $\E[\bar{Y}_n(a_n)]$ and $\E[Z_n(a_n)]$ and noting $a_0<a_n<n^{1/4}/\sqrt{4\log n}$,     
\begin{align}
\frac{\E[\bar{Y}_n(a_n)]}{\E[Z_n(a_n)]}\leq\exp\left\{-\frac{\pi^2\sqrt{n}}{8a_n^2}+\frac{5}{4}\log n\right\} \leq n^{-3}.\label{ratio-expec} 
\end{align}
This yields \eqref{expec} as desired. 

When $a_n\to\infty$, we can lift the above result in expectation to a high-probability statement using standard concentration tools. 
For $Z_n(a_n)$, the result follows from Chebyshev's inequality combined with the second-moment estimates in \Cref{sec:1/2}. For any $\delta>0$, 
\begin{align}
\Pp\left\{|Z_n(a_n)-\E[Z_n(a_n)]|\geq \delta\E[Z_n(a_n)]\right\}\leq\frac{\E[Z^2_n(a_n)]-\E[Z_n(a_n)]^2}{\delta^2\E[Z_n(a_n)]^2}. 
\end{align} 
Recall \eqref{hr} in \Cref{sec:1/2} we have established that 
\begin{align*}
\frac{\E[Z^2_n(a_n)]}{\E[Z_n(a_n)]^2}=\frac{1+o(1)}{\sqrt{1-\beta/a_n^2}},
\end{align*}
where $o(1)$ depends on $n$. Therefore, there exists a positive sequence $c_n\to 0$ such that  
\begin{align}
\Pp\left\{|Z_n(a_n)-\E[Z_n(a_n)]|\geq \delta\E[Z_n(a_n)]\right\}\leq\frac{1}{\delta^2}\left(\frac{1+c_n}{\sqrt{1-\beta/a_n^2}}-1\right), \label{event1}
\end{align}
where $o(1)$ here depends only on $n$. Choosing $\delta = ((1+c_n)/\sqrt{1-\beta/a_n^2}-1)^{1/3}=o(1)$ yields the concentration result for $Z_n(a_n)$. 

To transfer the result to $Y_n(a_n)$, applying Markov's inequality and \eqref{ratio-expec}, 
\begin{align}
\Pp\left\{\bar{Y}_n(a_n)\geq \frac{1}{n}\E[Z_n(a_n)]\right\}\leq\frac{n\E[\bar{Y}_n(a_n)]}{\E[Z_n(a_n)]}\leq n^{-2}. \label{event2}
\end{align} 
Conditional on the complements of the events in \eqref{event1} and \eqref{event2}, which hold with probability tending to one, 
\begin{align*}
\bar{Y}_n(a_n)<\frac{\E[Z_n(a_n)]}{n}<\frac{2Z_n(a_n)}{n}. 
\end{align*}
Moreover, \eqref{ratio-expec} gives $\E[Y_n(a_n)]=(1-o(1))\E[Z_n(a_n)]$. On the same high-probability event,
\[
Z_n(a_n)=(1+o(1))\E[Z_n(a_n)],
\qquad
\bar Y_n(a_n)=o(\E[Z_n(a_n)]),
\]
and hence
\[
Y_n(a_n)=Z_n(a_n)-\bar Y_n(a_n)
=(1+o(1))\E[Y_n(a_n)].
\]
This proves the concentration claims. 

\subsection{Proof of \Cref{thm:ogp}}

The proof is based on a first-moment calculation combined with Markov's inequality. We first prove 2-OGP on a finite grid and then pass to the full interval by continuity. Fix a small $\e>0$ and take $\mathcal I\subset[0,\pi/2]$ to be an equally spaced grid containing both endpoints and having mesh at most $2e^{-\e n/2}$. Then $|\mathcal I|\lesssim e^{\e n/2}$. We also assume $\eta_1 = \eta_2-\delta$ and $\eta_2\in [2\delta, 0.99]$, and take $\delta = 10^{-3}$.    

Fix $a<2$ and $(\tau_1, \tau_2)\in\mathcal I\times \mathcal I$. 
Define 
\begin{align*}
q^{(i)}(t; \tau_i, \sigma^{(i)})=\frac{1}{\sqrt{n}}\sum_{j=1}^n\sigma^{(i)}_jg^{(i)}_j(t; \tau_i),\quad\quad i = 1, 2,  
\end{align*}
where $g_j^{(i)}(t; \tau_i)$ are the same as defined in \Cref{def:m-OGP}. Consider the following random variable counting the configurations measured under the 2-OGP over the discrete set $\mathcal I$: 
\begin{align*}
&X_n(a)\coloneqq\sum_{(\tau_1, \tau_2)\in\mathcal I\times\mathcal I}\sum_{\substack{\sigma^{(1)}, \sigma^{(2)}\\ \eta_1\leq |\langle\sigma^{(1)}, \sigma^{(2)}\rangle|/n\leq\eta_2 }}\mathbf 1\{\mathcal B(\sigma^{(1)}, \sigma^{(2)}, \tau_1, \tau_2)\},  
\end{align*}
where 
\begin{align*}
\mathcal B(\sigma^{(1)}, \sigma^{(2)}, \tau_1, \tau_2)\coloneqq\left\{\left\|q^{(i)}(\cdot; \tau_i, \sigma^{(i)})\right\|_\infty\leq \frac{a}{\sqrt{n}},\  \text{for $i=1, 2$}\right\}. 
\end{align*}
The 2-OGP event over $\mathcal I$ can be bounded using Markov's inequality:
\begin{align*}
\Pp\left\{\mathcal S(\eta_1, \eta_2, 2, a, \mathcal I)\neq\emptyset\right\} = \Pr\{X_n(a)>0\}\leq \E[X_n(a)]. 
\end{align*} 
The 2-OGP would follow if we can show $\E[X_n(a)]$ is exponentially small in $n$. 

Note that $(q^{(1)}(t; \tau_1, \sigma^{(1)}), q^{(2)}(t; \tau_2, \sigma^{(2)}))$ is a planar Brownian motion with constant correlation 
\begin{align*}
\rho(\sigma^{(1)}, \sigma^{(2)}, \tau_1, \tau_2)\coloneqq \frac{1}{n}\cos(\tau_1)\cos(\tau_2)\langle\sigma^{(1)}, \sigma^{(2)}\rangle\implies |\rho(\sigma^{(1)}, \sigma^{(2)}, \tau_1, \tau_2)|\leq\eta_2. 
\end{align*}  
Using \Cref{lm:corB}, we have 
\begin{align*}
\E[\mathbf 1\{\mathcal B(\sigma^{(1)}, \sigma^{(2)}, \tau_1, \tau_2)\}] &= \Pp\left\{\left(q^{(1)}(t;\tau_1,\sigma^{(1)}),q^{(2)}(t;\tau_2,\sigma^{(2)})\right)\in[-a/\sqrt n,a/\sqrt n]^2,\ \forall t\in[0,1]\right\}\\
& = C_1(\rho(\sigma^{(1)}, \sigma^{(2)}, \tau_1, \tau_2))\exp\left\{-\frac{\lambda_1(\rho(\sigma^{(1)}, \sigma^{(2)}, \tau_1, \tau_2)) n}{2a^2}\right\} (1 + o(1)). 
\end{align*}
Moreover, since $\lambda_1$ is decreasing in $(0, 1)$ and $C_1$ is continuous on $[0, 0.99]$, for all sufficiently large $n$, 
\begin{align*}
\E[\mathbf 1\{\mathcal B(\sigma^{(1)}, \sigma^{(2)}, \tau_1, \tau_2)\}]\leq 2C_1^*\exp\left\{-\frac{\lambda_1(\eta_2) n}{2a^2}\right\}, \quad\quad C_1^*\coloneqq\max_{|\rho|\leq 0.99}C_1(\rho). 
\end{align*}
Consequently, we can bound  $\E[X_n(a)]$ by linearity of expectation. Let
\[
m_n\coloneqq\left\lfloor\frac{(1-\eta_1)n}{2}\right\rfloor.
\]
For each fixed $\sigma^{(1)}$, each sign of the overlap contributes at most $\delta n/2+2$ admissible Hamming distances, and the largest corresponding binomial coefficient is at most ${n\choose m_n}$. Hence
\begin{equation}
\begin{aligned}
\E[X_n(a)] &= \sum_{(\tau_1, \tau_2)\in\mathcal I\times\mathcal I}\sum_{\sigma^{(1)}}\sum_{\sigma^{(2)}: \eta_1\leq |\langle\sigma^{(1)}, \sigma^{(2)}\rangle|/n\leq \eta_2}\E[\mathbf 1\{\mathcal B(\sigma^{(1)}, \sigma^{(2)}, \tau_1, \tau_2)\}]\\
& \leq e^{\e n}\cdot 2^n\cdot 2\left(\frac{\delta n}{2}+2\right){n\choose m_n}\cdot 2C_1^*\exp\left\{-\frac{\lambda_1(\eta_2) n}{2a^2}\right\}\\
& = \exp\left\{-\left[\frac{\lambda_1(\eta_2)}{2a^2}-\e-\left(\log 2+h\left(\frac{1}{2}+\frac{\eta_2-\delta}{2}\right)\right) + o(1)\right]n\right\}. \label{En} 
\end{aligned}
\end{equation}
For the desired exponential smallness, we need to choose $a$ sufficiently small such that 
\begin{align*}
\frac{\lambda_1(\eta_2)}{2a^2}-\left(\log 2+h\left(\frac{1}{2}+\frac{\eta_2-\delta}{2}\right)\right)>0\iff a<\sqrt{\frac{\lambda_1(\eta_2)}{2\left(\log 2+h\left(\frac{1}{2}+\frac{\eta_2-\delta}{2}\right)\right)}},   
\end{align*}
where we ignore the effect of $\e$ because it can be chosen sufficiently small at the end without changing the result. 
To further simplify, we replace $\lambda_1(\eta_2)$ by its lower bound in \eqref{newguy}:
\begin{align}
\lambda_1(\eta_2) \geq  \underline{\lambda}_1(\eta_2)\coloneqq\frac{\pi^2}{4}\left(1+\sqrt{1-\eta_2^2}\right).   
\end{align}
To choose $a$ as large as possible, we maximize over $\eta_2\in [2\delta, 0.99]$ the lower bound function 
\begin{align*}
J(\eta_2)\coloneqq\sqrt{\frac{\underline{\lambda}_1(\eta_2)}{2\left(\log 2+h\left(\frac{1}{2}+\frac{\eta_2-\delta}{2}\right)\right)}} = \sqrt{\frac{\pi^2(1+\sqrt{1-\eta_2^2})}{8\left(\log 2+h\left(\frac{1}{2}+\frac{\eta_2-\delta}{2}\right)\right)}}. 
\end{align*}
We use the explicit choice $\eta_2=0.943$, so that $\eta_1=0.942$, and take $a=1.411$ and $\e=10^{-4}$. Outward-rounded interval arithmetic gives
\[
\underline\lambda_1(0.943)>3.2885,
\qquad
h(0.971)<0.13125,
\qquad
\log2<0.693148.
\]
Consequently,
\[
\frac{\underline\lambda_1(0.943)}{2(1.411)^2}
-10^{-4}-\bigl(\log2+h(0.971)\bigr)
>1.3\times10^{-3}.
\]
Thus the exponent in \eqref{En} is at most $-10^{-3}n$ for all sufficiently large $n$. This proves the 2-OGP with parameters $(0.942,0.943,2,1.411,\mathcal I)$. 

To pass this result to the whole interval, note that for any $\tau_i\in [0, \pi/2]$, there exists $\tau_i'\in\mathcal I$ such that $|\tau_i-\tau_i'|< 2e^{-\e n/2}$. Consequently, 
\begin{align*}
|q^{(i)}(t; \tau_i, \sigma^{(i)})-q^{(i)}(t; \tau'_i, \sigma^{(i)})|&=\left|\frac{1}{\sqrt{n}}\sum_{j=1}^n\sigma^{(i)}_j(g^{(i)}_j(t; \tau_i)-g^{(i)}_j(t; \tau'_i))\right|\\
&\leq \sqrt{n}\max_{j}|g^{(i)}_j(t; \tau_i)-g^{(i)}_j(t; \tau'_i)|\\
&\leq 2\sqrt{n}\max_j\{\|f_j^{(0)}\|_\infty, \|f_j^{(i)}\|_\infty\}|\tau_i-\tau_i'|\\
&< 4\sqrt{n}e^{-\e n/2}\max_j\{\|f_j^{(0)}\|_\infty, \|f_j^{(i)}\|_\infty\}, 
\end{align*}
where the penultimate step used the Lipschitz property of $\cos(\tau)$ and $\sin(\tau)$. A standard union-bound leveraging the exponential decay of $\|f^{(0)}_1\|_\infty$ yields that with probability at least $1-n^3e^{-\e' n}$ for some absolute constant $\e'>0$, $\max_{i, j}\{\|f_j^{(0)}\|_\infty, \|f_j^{(i)}\|_\infty\}\leq \sqrt{n}$. We denote this event by $\mathcal G$. Conditional on $\mathcal G$, for all sufficiently large $n$,
\begin{align*}
\max_{i, \sigma^{(i)}}\left\|q^{(i)}(\cdot; \tau_i, \sigma^{(i)})-q^{(i)}(\cdot; \tau'_i, \sigma^{(i)})\right\|_\infty<4ne^{-\e n/2}<\frac{10^{-3}}{\sqrt n}. 
\end{align*} 
Consequently, 
\begin{align*}
\Pp\left\{\mathcal S(0.942, 0.943, 2, 1.41, [0, \pi/2])\neq\emptyset\right\}&\leq\Pp\{\mathcal G^\complement\} + \Pp\left\{\mathcal S(0.942, 0.943, 2, 1.41, [0, \pi/2])\neq\emptyset; \mathcal G\right\}\\
&\leq\Pp\{\mathcal G^\complement\} + \Pp\left\{\mathcal S(0.942, 0.943, 2, 1.411, \mathcal I)\neq\emptyset; \mathcal G\right\}\\
&\leq n^3e^{-\e' n} + e^{-cn}. 
\end{align*}
Merging terms into a single exponential term for all large $n$ finishes the proof. 

\subsection{Proof of \Cref{thm:ogp-n}}

The proof follows from a direct counting argument. Since the interpolation interval consists of only the single point $0$, there is only one instance of Brownian paths. Without loss of generality, assume $a_n<n^{1/5}$; the more general $a_n$ can be reduced to this by taking the minimum with $n^{1/5}$ and using monotonicity. For convenience, write this instance as $f_1,\ldots, f_n$. Let
\begin{align*}
\mathcal D_n\coloneqq\left\{Z_n(a_n)\geq \exp\left\{\left(\log 2-\frac{\pi^2}{8a_n^2}\right)n\right\}\right\}.
\end{align*}
The concentration conclusion of \Cref{thm:count}, together with $4/\pi>1$, gives $\Pp(\mathcal D_n)=1-o(1)$.

We claim that $\mathcal D_n$ implies $\mathcal S(\eta_1,\eta_2,2,a_n,\{0\})\neq\emptyset$ for all sufficiently large $n$. For any $\sigma,\sigma'\in\{\pm1\}^n$, denote their Hamming distance by $\dist(\sigma,\sigma')$. Since
\begin{align*}
\frac{|\langle\sigma,\sigma'\rangle|}{n}
=\frac{|n-2\dist(\sigma,\sigma')|}{n},
\end{align*}
absolute overlap in $[\eta_1,\eta_2]$ corresponds, in particular, to Hamming distances in
\[
\mathcal I_n\coloneqq\left(\frac{(1-\eta_2)n}{2},\frac{(1-\eta_1)n}{2}\right).
\]
For all large $n$, choose an even integer $d_n\in\mathcal I_n$ such that $d_n/n$ stays uniformly bounded away from both $0$ and $1$ for all large $n$; for example, take the even integer closest to the midpoint of $\mathcal I_n$.

Suppose that $\mathcal S(\eta_1,\eta_2,2,a_n,\{0\})=\emptyset$ and $\mathcal D_n$ occur simultaneously. Any two signings in
$\{\sigma:L(\sigma)\leq a_n\}$ must have Hamming distance different from $d_n$. The chosen distance is even and satisfies $\alpha n<d_n<(1-\alpha)n$ for some fixed $\alpha=\alpha(\eta_1,\eta_2)>0$. The binary-code form of the Frankl--R\"odl forbidden-distance theorem \cite[Theorem~1.10]{frankl1987forbidden} therefore gives a constant $\varepsilon_0=\varepsilon_0(\eta_1,\eta_2)>0$ such that
\[
Z_n(a_n)\leq(2-\varepsilon_0)^n.
\]
On the other hand, $\mathcal D_n$ gives
\[
Z_n(a_n)\geq
\exp\left\{\left(\log2-\frac{\pi^2}{8a_n^2}\right)n\right\}.
\]
Since $a_n\to\infty$, the latter exponential rate tends to $\log2$ and is eventually strictly larger than $\log(2-\varepsilon_0)$, a contradiction. Hence $\mathcal D_n$ implies the desired nonempty solution set, and therefore
\[
\Pp\{\mathcal S(\eta_1,\eta_2,2,a_n,\{0\})\neq\emptyset\}
\geq\Pp(\mathcal D_n)=1-o(1).
\]